\documentclass[10pt]{amsart}
\makeatletter
\tagsleft@false
\makeatother

\usepackage{amssymb}

\usepackage{enumitem}

\newtheorem{thm}{Theorem}[section]
\newtheorem{prop}[thm]{Proposition}
\newtheorem{lem}[thm]{Lemma}
\newtheorem{cor}[thm]{Corollary}
\newtheorem{quest}[thm]{Question}

\makeatletter
\newtheorem*{rep@theorem}{\rep@title}
\newcommand{\newreptheorem}[2]{%
\newenvironment{rep#1}[1]{%
 \def\rep@title{#2 \ref{##1}}%
 \begin{rep@theorem}}%
 {\end{rep@theorem}}}
\makeatother

\newreptheorem{theorem}{Theorem}

\theoremstyle{definition}

\numberwithin{equation}{section}

\newcommand{\lut}{\operatorname{LUT}^1}
\newcommand{\fleh}{\operatorname{FLE}}

\newcommand{\h}{\operatorname{H}}
\newcommand{\splay}{\operatorname{Splay}}
\newcommand{\fullsplay}{\operatorname{FullSplay}}
\newcommand{\cone}{\operatorname{Cone}}
\newcommand{\e}{\mathcal{E}}

\usepackage{tkz-graph}\usetikzlibrary{decorations.markings}\usetikzlibrary{quotes}
\tikzset{->-/.style={
 decoration={
   markings,
   mark=at position #1 with {\arrow{>}}
   },
   postaction={decorate}
  },
  ->-/.default=0.5 
}
\usetikzlibrary{positioning}

\def\PlusCross{{%
    \setbox0\hbox{$+$}%
    \rlap{\hbox to \wd0{\hss$\times$\hss}}\box0
}}

\newcommand{\setup}{\GraphInit[vstyle=Empty]
            \SetVertexSimple[MinSize = 1pt]
            \SetUpEdge[lw = 0.5pt]
            \tikzset{EdgeStyle/.style={->-}}
            \tikzset{VertexStyle/.append style = {minimum size = 3pt, inner sep = 0pt}}
            \SetVertexNoLabel
            \SetGraphUnit{2}}

\DeclareFontFamily{U}{mathx}{\hyphenchar\font45}
\DeclareFontShape{U}{mathx}{m}{n}{
      <5> <6> <7> <8> <9> <10>
      <10.95> <12> <14.4> <17.28> <20.74> <24.88>
      mathx10
      }{}
\DeclareSymbolFont{mathx}{U}{mathx}{m}{n}
\DeclareMathSymbol{\bigtimes}{1}{mathx}{"91}

\calclayout

\usepackage{hyperref}

\usepackage{faktor}

\usepackage{xcolor}
\usepackage{soul}

\begin{document}

\title[Free left $h$-Ehresmann semigroups]{Free left $h$-Ehresmann semigroups}

\author{Daniel Heath}

\begin{abstract}

The class of left $h$-adequate semigroups sits between the classes of left ample and left Ehresmann semigroups. Whilst free objects in related classes have descriptions in terms of directed trees, free left $h$-adequate semigroups have yet only been described in terms of Rees quotients of free products. Here, we reintroduce the class of (left) $h$-Ehresmann semigroups, show their free objects coincide with free (left) $h$-adequate semigroups, and give a description of these free objects in terms of directed trees. We use our new description in investigating various properties of these structures, including finitary conditions of recent interest.

\end{abstract}

\maketitle



\section{Introduction}
Some of the earliest structures investigated in the \textit{York school of semigroup theory} were Fountain's (\textit{left}) \textit{adequate} semigroups \cite{fountain:adequate}. Informally, these are semigroups whose (right) cancellation properties are exactly determined by their semilattice of idempotents. They generalise the well-known (and well-studied) class of \textit{inverse} semigroups which were gaining interest at the time partly due to Munn's graphical description of free inverse monoids \cite{munn:freeinv}. They have since been generalised themselves with \textit{Ehresmann} semigroups, which were coined by Lawson \cite{lawson:saoc} due to their connection with work of Ehresmann on small categories \cite{ehresmann:small}.

Ehresmann and adequate semigroups have received interest over the subsequent decades (see e.g. \cite{aird:growth,batbedat:connections,branco:ehresmann,heath:graph,kambites:free,kambites:freeleft}), though much of the research involves the closely related class of \textit{restriction} semigroups and their subclass of (\textit{left}) \textit{ample} semigroups (see e.g. \cite{fountain:freeample,gould:expansions,gould:restr,gould:coherentfreemons}), originally coined (\textit{left}) \textit{type A} semigroups in \cite{fountain:adequate}.

It is perhaps lesser known that the original article \cite{fountain:adequate} also defines (\textit{right}) \textit{type B} semigroups. These are right adequate semigroups satisfying an additional identity which we call the \textit{right $h$ identity}, along with an extra condition on idempotents. Despite right type B semigroups and right ample semigroups forming disjoint classes \cite[Example 3.3]{fountain:adequate}, right ample semigroups do satisfy the $h$ identity. In a subsequent paper \cite{fountain:h}, Fountain weakens the right ample condition and considers right adequate semigroups satisfying the right $h$ identity: such structures are coined \textit{right $h$-adequate} semigroups. In \cite{fountain:h}, their free structures are described in terms of free products, and classical semigroup-theoretic properties of them are examined, such as $\mathcal{J}$-triviality (in the sense of \cite{green:relations}), solvability of the word problem, and having free semigroups as maximal left cancellative image. Right Ehresmann/adequate semigroups satisfying the right $h$ identity were further investigated under the name (\textit{weakly}) \textit{right $E$-hedged} in \cite{fountain:munntype,gomes:fund}.

Similar to Munn's description for inverse semigroups, free (left) Ehresmann and free (left) adequate semigroups (which coincide) and free (left) ample and free (left) restriction semigroups (which also coincide) have been given graphical descriptions \cite{fountain:freeample,gould:expansions,kambites:free,kambites:freeleft}. Recently, Aird and the author \cite{aird:growth} investigated the free left Ehresmann/adequate monoid of rank $1$ using this description and many of the results therein rely on it satisfying the dual of the right $h$ identity. It follows that the free left Ehresmann monoid of rank $1$ and the free left $h$-adequate monoid of rank $1$ coincide. This naturally leads to the question of whether the description of free left and right $h$-adequate monoids given by Fountain \cite{fountain:h} can alternatively be given in terms of directed trees in all ranks.

Here, we reintroduce \textit{left}/\textit{right}/\textit{two-sided} \textit{$h$-Ehresmann} monoids (formerly weakly hedged monoids \cite{fountain:munntype,gomes:fund}) and investigate their free structures. We show that free left $h$-Ehresmann monoids and free left $h$-adequate monoids coincide in all ranks, and give a description of them in terms of directed trees similar to results for free Ehresmann, ample and inverse monoids. This involves a new operation on trees which we term \textit{splaying} (see Section \ref{sec:splaying}).

The present paper proceeds as follows. In Section \ref{sec:prelims} we give some necessary background on (left) Ehresmann and (left) adequate semigroups, including Kambites' description of free objects in terms of trees \cite{kambites:freeleft}. In Section \ref{sec:hEhr}, we discuss left, right, and two-sided $h$-Ehresmann semigroups and provide some motivating examples. We extend a recent result of Gould and Johnson \cite[Corollary 6.6]{gould:coherent} and show that free left Ehresmann monoids are right $h$-adequate (Theorem \ref{thm:flehisrhad}). In Section \ref{sec:splaying}, we give a description of free left $h$-adequate monoids in terms of directed trees, and use it in Section \ref{sec:props} to investigate various properties they enjoy. In particular we show that the free left $h$-Ehresmann monoids coincide with the free left $h$-adequate monoids (Corollary \ref{cor:hEhrandhAdcoincide}), are two-sided $h$-adequate (Theorem \ref{thm:hishadequate}) and satisfy various finitary properties of recent interest such as being \textit{weakly coherent}, \textit{finitely equated} and \textit{ideal Howson} (Section \ref{sec:finitary}). We close by discussing some open questions.

\section{Preliminaries}\label{sec:prelims}

We assume the reader is familiar with general semigroup theory and fundamentals of universal algebra. Whilst we cover necessary details here, we refer the reader to \cite{howie:fundamentals} for details on semigroups and \cite{burris:universal} for details on universal algebra. 

\subsection{Ehresmann semigroups}

A semigroup $S$ equipped with a unary operation ${}^+: S \to S$ is called a \textit{left Ehresmann semigroup} if
\[x^+x = x,\quad(x^+y^+)^+ = x^+y^+ = y^+x^+\mkern18mu\text{and}\mkern18mu(xy)^+=(xy^+)^+\]for all $x,y \in S$.

A semigroup $S$ equipped with a unary operation ${}^\ast: S \to S$ is called a \textit{right Ehresmann semigroup} if
\[xx^\ast = x,\quad(x^\ast y^\ast)^\ast = x^\ast y^\ast = y^\ast x^\ast \mkern18mu\text{and}\mkern18mu (xy)^\ast=(x^\ast y)^\ast\]for all $x,y \in S$.

A semigroup $S$ equipped with two unary operations ${}^+$ and ${}^\ast$ is called a (\textit{two-sided}) \textit{Ehresmann semigroup} if it is left Ehresmann with respect to ${}^+$, right Ehresmann with respect to ${}^\ast$, and such that it satisfies the \textit{linking identities}:
\[(x^+)^\ast = x^+\mkern18mu\text{and}\mkern18mu (x^\ast)^+ = x^\ast.\]

We will always treat these structures as coming equipped with their unary operation(s). In line with other literature \cite{aird:growth,branco:ehresmann,kambites:free,kambites:freeleft}, we consider left Ehresmann semigroups and right Ehresmann semigroups as $(2,1)$-algebras and Ehresmann semigroups as $(2,1,1)$-algebras with their supplementary operations.

If a [resp. left/right/two-sided] Ehresmann semigroup contains a multiplicative identity, we say it is a [resp. \textit{left}/\textit{right}/\textit{two-sided}] \textit{Ehresmann monoid} and enrich the signature to $(2,1,0)$ in the one-sided cases or $(2,1,1,0)$ in the two-sided case by distinguishing the identity element. It follows from the axioms above that the identity element is stabilised by the unary operation(s).

These classes contain a wealth of examples: indeed, any monoid $M$ is Ehresmann under the unary operations $m^+= m^\ast = 1$ for all $m \in M$. We call such examples \textit{reduced Ehresmann monoids}. \textit{Inverse} semigroups are also Ehresmann: for an inverse semigroup $S$, one can take $s^+ = ss^{-1}$ and $s^\ast = s^{-1}s$. We will observe more examples in Section \ref{sec:examples}.

In studying a given left Ehresmann semigroup $S$, the set of \textit{projections}\[P(S) = \left\{s^+ \mid s \in S \right\}\]consistently proves its importance. Projections of a right Ehresmann semigroup are defined dually. In the case of a two-sided Ehresmann semigroup, the linking identities ensure that the sets coincide:
\[P(S) = \left\{s^+ \mid s \in S \middle\}\, =\, \middle\{s^\ast \mid s \in S \right\}.\]In all cases, $P(S)$ forms a subsemilattice of $S$ where $e \leq f$ if and only if $e = ef$. We highlight the important observation that, whilst all projections are idempotent, idempotents are not necessarily projections.

\subsection{Adequate semigroups}
We mention an important subclass of Ehresmann semigroups. A left Ehresmann semigroup is called \textit{left adequate} if it additionally satisfies the following quasi-identities:\[x^2 = x \rightarrow x = x^+ \mkern18mu\text{and}\mkern18mu xz = yz \rightarrow xz^+ = yz^+.\]Dually, a right Ehresmann semigroup is called \textit{right adequate} if it additionally satisfies the following quasi-identities:\[x^2 = x \rightarrow x = x^\ast \mkern18mu\text{and}\mkern18mu zx = zy \rightarrow z^\ast x = z^\ast y.\]An Ehresmann semigroup is called (\textit{two-sided}) \textit{adequate} if it is left adequate and right adequate. We note here that the linking identities automatically follow from the quasi-identities above, and thus it is enough to separately show left and right adequacy of a given semigroup to deduce two-sided adequacy. Moreover in both one-sided and two-sided adequate semigroups, the quasi-identities further ensure that all idempotents are projections.

In the adequate cases, the unary operations ${}^+$ and ${}^\ast$ are uniquely determined: if $S$ is, say, a left adequate semigroup with respect to a unary operation ${}^{+_1}$ and an operation ${}^{+_2}$, then $s^{+_1} = s^{+_2}$ for all $s \in S$. In this sense, we may say a semigroup is \textit{left adequate} (with no mention of a unary operation) if there is some unary operation under which it is left adequate. We use similar terms for right and two-sided adequate.

One may equivalently define adequate semigroups using Green-esque relations $\mathcal{L}^\ast$ and $\mathcal{R}^\ast$ \cite{fountain:adequate}, and study the wider class of Ehresmann semigroups through analogues $\widetilde{\mathcal{L}}$ and $\widetilde{\mathcal{R}}$ \cite{elqallali:struc,gould:restr}, though we do not take these routes here.

\subsection{Free structures}

The present paper is primarily concerned with free structures in various classes. We recall here some fundamentals of universal algebra which we require going forward; for a more detailed overview and undefined terms see \cite{burris:universal} or \cite[Section 2.2]{heath:graph}.  

Let $\mathcal{C}$ be a class of algebras of common signature. An algebra $F \in \mathcal{C}$ is called \textit{free} on a subset $X \subseteq F$ if for every $M \in \mathcal{C}$ and every function $\chi :X \to M$, there is a unique \textit{morphism} $F \to M$ (i.e. a map respecting all operations) extending $\chi$.

By classical results of universal algebra, if the class $\mathcal{C}$ forms a non-trivial \textit{quasi-variety} then a free object $F$ exists for any set $X$ and is unique up to isomorphism \cite{birkhoff:struct,burris:universal}. Moreover if $X$ and $Y$ are sets of the same cardinality, the unique free object on $X$ and the unique free object on $Y$ are isomorphic \cite[Lemma 4.110]{mckenzie:algebra}. We say this unique free object is of \textit{rank} $|X|$ (equivalently $|Y|$).

The classes of one-sided and two-sided Ehresmann/adequate semigroups all form quasi-varieties in their respective signatures (in fact \textit{varieties} in the Ehresmann cases). Hence free objects exist in all ranks. Moreover, it is seen that for any set $X$, the free [resp. left/right/two-sided] Ehresmann semigroup on $X$ and the free [resp. left/right/two-sided] adequate semigroup on $X$ coincide (see \cite[Section 6]{kambites:free}). These free objects were given a description by Kambites \cite{kambites:free,kambites:freeleft} via birooted trees. 

\subsection{\texorpdfstring{$X$}{X}-trees}\label{sec:trees}

Here we recall the description of free left Ehresmann monoids (of rank at least $1$) from \cite{kambites:freeleft}. Fix a non-empty set $X$. A \textit{left $X$-tree} $\Gamma$ is a finite directed tree, edge-labelled by elements of $X$, with two distinguished vertices (called \textit{start} and \textit{end}) such that for every vertex $v$ of $\Gamma$ there is a (possibly empty, necessarily unique) directed path from the start vertex to $v$. The directed path from the start vertex to the end vertex is called the \textit{trunk}. We call the vertices and edges of the trunk the \textit{trunk vertices} and \textit{trunk edges} respectively.

We say an edge from a vertex $v_1$ to a vertex $v_2$ has \textit{initial} vertex $v_1$, \textit{terminal} vertex $v_2$, and is \textit{incident} to both $v_1$ and $v_2$. We say a vertex $v$ is \textit{reachable} from a vertex $u$ if there is some directed path from $u$ to $v$. Note that every vertex is reachable from the start vertex in a left $X$-tree.

We extend the labelling on edges to paths: the label of a path is the word over $X$ given by the concatenation of its edge labels. We say a word $w$ is \textit{readable} from a vertex $u$ to a vertex $v$ if there is a directed path with initial vertex $u$, terminal vertex $v$, and label $w$.

We represent left $X$-trees diagrammatically as graphs in the expected way, with the symbol {\large $+$} decorating the start vertex and the symbol {\large $\times$} decorating the end vertex. In instances where the start and end vertex coincide, we decorate using the symbol {\large$\PlusCross$}. Figure \ref{fig:xtrees} shows some examples of left $\{x,y\}$-trees.

\begin{figure}[ht]
    \centering
        \begin{tikzpicture}
            \setup

            \node (A) at (0,0) {\large$+$};
            \Vertex[x=0,y=1]{C1};
            \Vertex[x=1,y=1]{B1};
            \Vertex[x=0,y=2]{C2};
            \Vertex[x=1,y=2]{B2a};
            \Vertex[x=2,y=3]{B2aa};
            \Vertex[x=2,y=1]{B2ab};
            \node (C3) at (0,3) {\large$\times$};
        
            \Edge(A)(C1)\draw (A) -- (C1) node[midway,left=2pt] {$x$};
            \Edge(C1)(B1)\draw (C1) -- (B1) node[midway,below=2pt] {$x$};
            \Edge(C1)(C2)\draw (C1) -- (C2) node[midway,left=2pt] {$y$};
            \Edge(C2)(B2a)\draw (C2) -- (B2a) node[midway,below=2pt] {$x$};
            \Edge(B2a)(B2aa)\draw (B2a) -- (B2aa) node[midway,left=2pt] {$y$};
            \Edge(B2a)(B2ab)\draw (B2a) -- (B2ab) node[midway,left=2pt] {$x$};
            \Edge(C2)(C3)\draw (C2) -- (C3) node[midway, left=2pt] {$x$};
        \end{tikzpicture}
        \hspace{3em}
        \begin{tikzpicture}
            \setup

            \node (A) at (0,0) {\large$+$};
            \Vertex[x=0,y=1]{C1};
            \Vertex[x=1,y=2]{B1};
            \Vertex[x=0,y=3]{C3};
            \Vertex[x=1,y=3]{B2aa};
            \Vertex[x=2,y=2]{B2ab};
            \node (C2) at (0,2) {\large$\times$};
        
            \Edge(A)(C1)\draw (A) -- (C1) node[midway,left=2pt] {$x$};
            \Edge(C1)(B1)\draw (C1) -- (B1) node[midway,below=2pt] {$y$};
            \Edge(C1)(C2)\draw (C1) -- (C2) node[midway,left=2pt] {$y$};
            \Edge(B1)(B2aa)\draw (B1) -- (B2aa) node[midway,left=2pt] {$x$};
            \Edge(B1)(B2ab)\draw (B1) -- (B2ab) node[midway,below=2pt] {$y$};
            \Edge(C2)(C3)\draw (C2) -- (C3) node[midway, left=2pt] {$y$};
        \end{tikzpicture}
        \hspace{3em}
        \begin{tikzpicture}
            \setup

            \node (_) at (0,0) {};
            \node (A) at (0,0.5) {\large$\PlusCross$};
            \Vertex[x=0,y=1.5]{C1};
            \Vertex[x=1,y=0.5]{B0};
            \Vertex[x=1,y=2.5]{B1};
            \Vertex[x=0,y=2.5]{C2};
        
            \Edge(A)(C1)\draw (A) -- (C1) node[midway,left=2pt] {$y$};
            \Edge(A)(B0)\draw (A) -- (B0) node[midway,above=2pt] {$x$};
            \Edge(C1)(B1)\draw (C1) -- (B1) node[midway,below=2pt] {$x$};
            \Edge(C1)(C2)\draw (C1) -- (C2) node[midway,left=2pt] {$x$};

        \end{tikzpicture}
    \caption{Some left $X$-trees where $X = \{x,y\}$.}
    \label{fig:xtrees}
\end{figure}

A \textit{morphism} of left $X$-trees from $\Gamma$ to $\Delta$ is a directed graph morphism $\phi:\Gamma \to \Delta$ respecting edge-labelling and mapping the start vertex of $\Gamma$ to the start vertex of $\Delta$ and likewise for the end vertex. We will, without exception, consider left $X$-trees only up to \textit{isomorphism} and consider trees equal if they are formally isomorphic.

We denote the set of all left $X$-trees (up to isomorphism) by $\lut(X)$. Given two left $X$-trees $\Gamma,\Delta \in \lut(X)$, Kambites \cite{kambites:free} defines the \textit{gluing product} $\Gamma \times \Delta$ to be the left $X$-tree obtained by gluing $\Delta$ to $\Gamma$ start-to-end, with start vertex the start vertex of $\Gamma$ and end vertex the end vertex of $\Delta$. Further, the left $X$-tree $\Gamma^{(+)}$ is defined to be the tree $\Gamma$ with the end vertex instead located at the start vertex. Figure \ref{fig:operations} shows an example of these two operations.

\begin{figure}[ht]
    \centering
        \begin{tikzpicture}
            \setup

            \node (_) at (0,0) {};
            \node (A) at (0,0.5) {\large$+$};
            \Vertex[x=0,y=1.5]{C1};
            \Vertex[x=1,y=1.5]{B1};
            \node (C2) at (0,2.5) {\large$\times$};
            \Vertex[x=1,y=2.5]{B2};
        
            \Edge(A)(C1)\draw (A) -- (C1) node[midway,left=2pt] {$x$};
            \Edge(C1)(C2)\draw (C1) -- (C2) node[midway,left=2pt] {$y$};
            \Edge(C1)(B1)\draw (C1) -- (B1) node[midway,below=2pt] {$x$};
            \Edge(C2)(B2)\draw (C2) -- (B2) node[midway,below=2pt] {$x$};

        \end{tikzpicture}
        \hspace{3em}
        \begin{tikzpicture}
            \setup

            \node (_) at (0,0) {};
            \node (A) at (0,1) {\large$+$};
            \Vertex[x=1,y=1]{B1};
            \node (C1) at (0,2) {\large$\times$};
        
            \Edge(A)(C1)\draw (A) -- (C1) node[midway,left=2pt] {$x$};
            \Edge(A)(B1)\draw (A) -- (B1) node[midway,below=2pt] {$y$};

        \end{tikzpicture}
        \hspace{3em}
        \begin{tikzpicture}
            \setup

            \node (A) at (0,0) {\large$+$};
            \Vertex[x=0,y=1]{C1};
            \Vertex[x=1,y=1]{B1};
            \Vertex[x=0,y=2]{C2};
            \Vertex[x=1,y=1.5]{B2};
        
            \Edge(A)(C1)\draw (A) -- (C1) node[midway,left=2pt] {$x$};
            \Edge(C1)(C2)\draw (C1) -- (C2) node[midway,left=2pt] {$y$};
            \Edge(C1)(B1)\draw (C1) -- (B1) node[midway,below=2pt] {$x$};
            \Edge(C2)(B2)\draw (C2) -- (B2) node[midway,below=2pt] {$x$};

            \Vertex[x=1,y=2.5]{dB1};
            \node (dC1) at (0,3) {\large$\times$};
        
            \Edge(C2)(dC1)\draw (C2) -- (dC1) node[midway,left=2pt] {$x$};
            \Edge(C2)(dB1)\draw (C2) -- (dB1) node[midway,above=2pt] {$y$};

        \end{tikzpicture}
        \hspace{3em}
        \begin{tikzpicture}
            \setup

            \node (_) at (0,0) {};
            \node (A) at (0,0.5) {\large$\PlusCross$};
            \Vertex[x=0,y=1.5]{C1};
            \Vertex[x=1,y=1.5]{B1};
            \Vertex[x=0,y=2.5]{C2};
            \Vertex[x=1,y=2.5]{B2};
        
            \Edge(A)(C1)\draw (A) -- (C1) node[midway,left=2pt] {$x$};
            \Edge(C1)(C2)\draw (C1) -- (C2) node[midway,left=2pt] {$y$};
            \Edge(C1)(B1)\draw (C1) -- (B1) node[midway,below=2pt] {$x$};
            \Edge(C2)(B2)\draw (C2) -- (B2) node[midway,below=2pt] {$x$};

        \end{tikzpicture}
    \caption{From left-to-right, some left $\{x,y\}$-trees $\Gamma$ and $\Delta$, with the product $\Gamma \times \Delta$ and the tree $\Gamma^{(+)}$.}
    \label{fig:operations}
\end{figure}

With these operations and distinguished trivial one-vertex tree, $\lut(X)$ forms a $(2,1,0)$-algebra which is generated by the set of single-edge trees with distinct start and end vertices, edge-labelled $x \in X$ for each $x \in X$. Throughout, we will implicitly identify this generating set of $\lut(X)$ with $X$ itself.

\begin{prop}\label{prop:lutgenerated}
    Let $X$ be a non-empty set. Then $\lut(X)$ is $X$-generated as a $(2,1,0)$-algebra.
\end{prop}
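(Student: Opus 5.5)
We argue by strong induction on the number of edges of a left $X$-tree $\Gamma$, showing that every $\Gamma \in \lut(X)$ is a $(2,1,0)$-term in the single-edge trees $x \in X$, the trivial one-vertex tree $1$, the gluing product $\times$ and the operation ${}^{(+)}$. If $\Gamma$ has no edges it is the trivial tree, which is the distinguished constant. So let $\Gamma$ have at least one edge; we split according to whether the start vertex equals the end vertex.

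\emph{Case 1: the trunk is non-empty.} Let the trunk be $v_0 \to v_1 \to \dots \to v_n$ with $n \geq 1$, $v_0$ the start and $v_n$ the end. For each $i$, let $\Gamma_i$ be the subtree of vertices reachable from $v_i$ but not through the trunk edge $v_i \to v_{i+1}$; for $i=n$ there is no such edge. Regard $\Gamma_i$ as a left $X$-tree with start and end both at $v_i$. Let $x_i$ be the label of the trunk edge $v_{i-1}\to v_i$. Every vertex is reachable from $v_0$, and the underlying graph is a tree, so the vertex sets of the $\Gamma_i$ partition the vertices of $\Gamma$. Hence
\[\Gamma = \Gamma_0 \times x_1 \times \Gamma_1 \times x_2 \times \cdots \times x_n \times \Gamma_n,\]
as one checks by unwinding the gluing product (start-to-end identification). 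Each $\Gamma_i$ has strictly fewer edges than $\Gamma$, since it omits every trunk edge, so each is generated by induction.

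\emph{Case 2: start equals end.} Let the edges leaving the start vertex $v$ be $e_1,\dots,e_k$ with $k\geq 1$, labelled $y_1,\dots,y_k$, and terminal vertices $u_1,\dots,u_k$. Let $\Delta_j$ be the subtree of vertices reachable from $u_j$, with start and end both at $u_j$. Then
\[\Gamma = (y_1 \times \Delta_1)^{(+)} \times (y_2\times\Delta_2)^{(+)} \times \cdots \times (y_k\times\Delta_k)^{(+)}.\]
This holds because gluing trees whose start and end coincide simply identifies their roots, and these branches are disjoint apart from $v$ by the tree property. Each $\Delta_j$ has fewer edges than $\Gamma$ and is generated by induction.

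The main point requiring care is verifying that these decompositions are exact up to isomorphism. Specifically, we must check that the pieces share no vertices except the glued ones and that together they cover $\Gamma$. Both facts follow from the unique-directed-path property from the start vertex, which makes the reachability subtrees disjoint and exhaustive. Everything else is bookkeeping of the gluing product, together with its associativity, which we use implicitly when writing the iterated products.
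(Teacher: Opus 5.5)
Your proof is correct and takes essentially the same approach as the paper, which only says the result follows as in Kambites' proof of \cite[Proposition 5.1]{kambites:free}. That argument is the same induction on the number of edges: trunk edges are split off when the start and end vertices differ, and a tree with coinciding start and end vertex is written as a product of pieces $(y \times \Delta)^{(+)}$ over the edges leaving the root. The only cosmetic difference is that you take the whole branch decomposition and all root edges at once, rather than peeling off one piece at a time.
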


\begin{proof}
    The proof is analogous to that of \cite[Proposition 5.1]{kambites:free}.
\end{proof}

The set of all left $X$-trees with coinciding start and end vertex forms a $(2,1,0)$-subalgebra of $\lut(X)$, which we denote $\e(X)$. We stress that neither $\lut(X)$ nor $\e(X)$ is left Ehresmann (the identity $\Gamma^{(+)}\times\Gamma = \Gamma$ fails for any non-trivial $\Gamma$ for example).

\subsection{Retractions}

A \textit{retraction} on a left $X$-tree $\Gamma$ is a morphism $\phi: \Gamma \to \Gamma$ such that $\phi \circ \phi = \phi$. A left $X$-tree is called \textit{pruned} (or \textit{retract-free}) if the only retraction on $\Gamma$ is the identity map. Kambites \cite{kambites:free} made the following important observation which will be key for us going forward.

\begin{prop}[{\cite[Proposition 3.5]{kambites:free}}]\label{prop:retractconfluent}
    Let $\Gamma$ be a left $X$-tree. Then there exists a unique pruned left $X$-tree $\overline{\Gamma}$ which is the image of $\Gamma$ under a retraction $\Gamma \to \Gamma$.
\end{prop}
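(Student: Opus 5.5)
The plan has two parts: existence and uniqueness. For existence, I will take a retraction $\phi$ of $\Gamma$ whose image has the minimal possible number of vertices; the identity map is a retraction, so such a minimal one exists. Let $\Delta = \phi(\Gamma)$. Because $\phi$ is a morphism of left $X$-trees, $\Delta$ contains the start and end vertices. Every vertex of $\Delta$ is reachable from the start vertex inside $\Delta$, since a morphism sends a directed path from the start to $v$ onto a directed path from the start to $\phi(v)$. Hence $\Delta$ is a left $X$-tree. If $\Delta$ admitted a non-identity retraction $\psi$, then $\psi\circ\phi$ would be a retraction of $\Gamma$: indeed $\psi\phi\psi\phi = \psi\psi\phi = \psi\phi$, because $\phi$ fixes $\Delta$ pointwise. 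Its image $\psi(\Delta)$ would be strictly smaller than $\Delta$, contradicting minimality. So $\Delta$ is pruned.

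For uniqueness, suppose $\phi_1,\phi_2$ are retractions of $\Gamma$ with pruned images $\Delta_1,\Delta_2$. I will show $\Delta_1\cong\Delta_2$. Consider the restrictions $\alpha = \phi_2|_{\Delta_1}:\Delta_1\to\Delta_2$ and $\beta=\phi_1|_{\Delta_2}:\Delta_2\to\Delta_1$, which are morphisms of left $X$-trees. The composite $\beta\alpha$ is an endomorphism of the finite tree $\Delta_1$. Some power $(\beta\alpha)^n$ is idempotent, so it is a retraction of $\Delta_1$. Since $\Delta_1$ is pruned, that power is the identity, so $\beta\alpha$ is a bijection on vertices. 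Symmetrically $\alpha\beta$ is bijective on $\Delta_2$. Hence $\alpha$ is a bijection on vertices, and also on edges by the same argument applied to edge maps. Its inverse $(\beta\alpha)^{-1}\beta$ is a morphism, because $(\beta\alpha)^{-1}$ is a power of $\beta\alpha$. Therefore $\alpha$ is an isomorphism.

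The main obstacle is making sure these restricted maps are genuine morphisms of left $X$-trees. They clearly preserve the labelled graph structure and the start and end vertices, since $\phi_i$ fixes both. I also need injectivity on edges when deducing that an isomorphism results. This holds because in a left $X$-tree every vertex other than the start has a unique incoming edge, so vertex bijectivity together with label preservation gives edge bijectivity. Alternatively, this step can be deferred to \cite[Proposition 3.5]{kambites:free}, as the paper does.
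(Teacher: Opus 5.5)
Your proof is correct. The paper gives no argument of its own here; it only cites Kambites, so there is no internal proof to match. What you supply is a self-contained proof along the lines of the standard ``core'' argument for finite structures.
\begin{itemize}
\item \textbf{Existence.} You take a retraction whose image has the fewest vertices, and use that $\psi\circ\phi$ is a retraction of $\Gamma$ whenever $\psi$ is a retraction of $\phi(\Gamma)$.
\item \textbf{Uniqueness.} You restrict the retractions to get morphisms $\Delta_1\to\Delta_2\to\Delta_1$, and use the idempotent-power trick to show that every endomorphism of a pruned tree is an automorphism.
\end{itemize}

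Two details deserve a sentence each.
\begin{itemize}
\item A non-identity retraction must move some vertex, and hence omit it from its image. This holds because a morphism of left $X$-trees is determined by its vertex map: a tree has at most one edge between any two vertices.
\item $\phi(\Gamma)$ is a tree because it is a connected subgraph of one.
\end{itemize}

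Compared with the bare citation, your route makes explicit two facts the paper later uses without comment.
\begin{itemize}
\item Two pruned trees with morphisms in both directions are isomorphic.
\item A retract of a retract is a retract. This is exactly what is needed in the proof of Lemma~\ref{lem:splay}\eqref{lem:splay:splayrhogamma}, to pass from a retraction of $\fullsplay(\Gamma)$ onto $\fullsplay(\rho(\Gamma))$ to the equality $\overline{\fullsplay(\Gamma)}=\overline{\fullsplay(\rho(\Gamma))}$.
\end{itemize}
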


Denote the set of all pruned left $X$-trees by $\operatorname{T}(X)$. One may thus consider $\overline{\,\cdot\,}$ as a map from $\lut(X)$ to $\operatorname{T}(X)$. It follows from Proposition \ref{prop:retractconfluent} that\[\operatorname{T}(X) = \left\{ \overline{\Gamma} \mid \Gamma \in \lut(X) \right\}.\]We imbue $\operatorname{T}(X)$ with a $(2,1,0)$-structure by defining operations\[\Gamma\Delta := \overline{\Gamma \times \Delta}\quad\text{and}\quad\Gamma^+ := \overline{\Gamma^{(+)}}\]and distinguishing the trivial one-vertex tree. These operations interact well with retraction; for example the retraction map $\overline{\,\cdot\,}$ is a surjective morphism of $(2,1,0)$-algebras (see \cite[Theorem 4.5]{kambites:free} and \cite[Proposition 3.2]{kambites:freeleft}). We state here various known results which we will refer to throughout.

\begin{lem}[{\cite[Theorem 4.5, Proposition 4.9]{kambites:free}}]\label{lem:barismorphism}
    Let $\Gamma,\Delta,\Xi \in \lut(X)$. Then:\begin{enumerate}
        \item $\overline{\overline{\Gamma}} = \overline{\Gamma}$.\label{lem:barismorphism:barbar}
        \item $\overline{\Gamma \times \Delta} = \overline{\overline{\Gamma} \times \overline{\Delta}}$.\label{lem:barismorphism:bartimesbar}
        \item $\overline{\Gamma^{(+)}} = \overline{\overline{\Gamma}^{(+)}}$. \label{lem:barismorphism:plusbar}
        \item $\overline{\Gamma \times \Xi} = \overline{\Delta \times \Xi}$ if and only if $\overline{\Gamma \times \Xi^{(+)}} = \overline{\Delta \times \Xi^{(+)}}$.\label{lem:barismorphism:adequatequasi}
    \end{enumerate}
\end{lem}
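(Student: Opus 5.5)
The plan is to derive all four parts from the confluence statement of Proposition~\ref{prop:retractconfluent}, together with one auxiliary fact: if $\Gamma'$ is the image of a retraction on $\Gamma$, then $\overline{\Gamma'} = \overline{\Gamma}$. I would prove this fact first. Composing a retraction of $\Gamma'$ onto $\overline{\Gamma'}$ with the retraction $\Gamma \to \Gamma'$ gives an idempotent endomorphism of $\Gamma$ whose image is the pruned tree $\overline{\Gamma'}$. Uniqueness in Proposition~\ref{prop:retractconfluent} then gives $\overline{\Gamma'} = \overline{\Gamma}$. Part (\ref{lem:barismorphism:barbar}) is immediate from this, since the identity is a retraction of $\overline{\Gamma}$ and $\overline{\Gamma}$ is pruned.

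For (\ref{lem:barismorphism:bartimesbar}), I take retractions $\phi:\Gamma\to\Gamma$ and $\psi:\Delta\to\Delta$ with images $\overline{\Gamma}$ and $\overline{\Delta}$. Both fix the start and end vertices, because morphisms of left $X$-trees preserve them. So $\phi$ and $\psi$ glue along the identified vertex (end of $\Gamma$ equals start of $\Delta$) to a morphism $\phi\cup\psi$ of $\Gamma\times\Delta$. This map is idempotent, respects labels and the distinguished vertices, and has image $\overline{\Gamma}\times\overline{\Delta}$. The auxiliary fact then yields the claim.

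For (\ref{lem:barismorphism:plusbar}), note that $\Gamma^{(+)}$ has the same underlying labelled tree as $\Gamma$, but its end vertex is the start vertex. Hence any retraction of $\Gamma$ is also a retraction of $\Gamma^{(+)}$, since it fixes the start vertex, and its image is $\overline{\Gamma}^{(+)}$. The auxiliary fact concludes.

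Part (\ref{lem:barismorphism:adequatequasi}) is the main obstacle, since it is a genuine cancellation statement rather than a formal consequence of confluence.

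\emph{Forward direction.} From $\overline{\Gamma\times\Xi}=\overline{\Delta\times\Xi}$, apply the operation $(+)$ relative to the vertex joining the two factors. Concretely, I would use the characterisation that $\overline{\Gamma\times\Xi^{(+)}}$ is obtained by forgetting the trunk beyond the gluing vertex. I would show that an isomorphism between the pruned forms can be chosen to send the image of the end of $\Gamma$ to the image of the end of $\Delta$. To do this, I would track the trunk: in a pruned tree the trunk is fixed, and the gluing vertex is the trunk vertex at distance $|\text{trunk of }\Xi|$ from the end. Since $\Xi$ is pruned-equivalent on both sides, this depth is determined in the pruned form as well. I would then redeclare that trunk vertex as the end, and use (\ref{lem:barismorphism:bartimesbar}), (\ref{lem:barismorphism:plusbar}) and the observation that relocating the end vertex along the trunk commutes with retraction to get $\overline{\Gamma\times\Xi^{(+)}}=\overline{\Delta\times\Xi^{(+)}}$.

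\emph{Converse.} Given $\overline{\Gamma\times\Xi^{(+)}}=\overline{\Delta\times\Xi^{(+)}}$, both trees contain the trunk of $\Xi$ readable from the gluing vertex. I would glue the trunk of $\Xi$ back on, i.e.\ relocate the end of each tree to the vertex reached by reading the trunk label of $\Xi$ from the gluing vertex. This is well defined because a retraction maps that path onto a path with the same label from the image of the gluing vertex, and the relocated tree equals $\overline{\Gamma\times\Xi}$ by the same commuting argument.

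The delicate step is justifying that relocating an end vertex along a retained path commutes with taking pruned forms, since retractions might a priori fail to fix the new end. I would handle this by noting that retractions of the relocated tree are exactly the retractions of the original tree fixing that vertex, and then showing that some retraction onto the pruned form does fix it. If this gets messy, I would instead cite \cite[Proposition 4.9]{kambites:free}, as the statement indicates.
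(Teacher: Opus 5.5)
The paper gives no proof of this lemma and only cites Kambites, so your proposal has to stand on its own. The auxiliary fact, parts (1)--(3), and your forward direction of (4) are sound. The point you use is that every endomorphism of a left $X$-tree fixes the trunk pointwise. Hence a retraction of $\Gamma\times\Xi$ onto $\overline{\Gamma\times\Xi}$ is also a retraction of $\Gamma\times\Xi^{(+)}$. So $\overline{\Gamma\times\Xi^{(+)}}$ is the pruned form of $\overline{\Gamma\times\Xi}$ with its end moved back along the trunk past the trunk edges of $\Xi$. You must re-prune after this relocation, so ``forgetting'' the trunk segment has to mean un-distinguishing it, not deleting it.

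The genuine gap is in the converse of (4). Relocating the end of $\overline{\Gamma\times\Xi^{(+)}}$ forward along a path labelled by the trunk word of $\Xi$ is neither well defined nor correct.

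\emph{Not well defined.} Pruned trees need not be deterministic: two equally labelled edges from one vertex may lead to incomparable subtrees, so several such paths can exist.

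\emph{Not correct.} No retraction of $\Gamma\times\Xi^{(+)}$ onto its pruned form need fix the end of $\Xi$. Once the end is moved, the copy of $\Xi$'s trunk is an ordinary branch and can be folded into a longer one. Concretely, let $\Gamma\in\e(X)$ be a single path labelled $ab$ from its start vertex, and let $\Xi$ be a single edge labelled $a$.
\begin{itemize}
\item In $\Gamma\times\Xi^{(+)}$ the only non-identity endomorphism sends the end of $\Xi$ onto the $a$-child in $\Gamma$. So $\overline{\Gamma\times\Xi^{(+)}}=\Gamma$, and every retraction onto it moves the end of $\Xi$.
\item Relocating the end of $\Gamma$ along $a$ gives a pruned tree with three vertices.
\item $\Gamma\times\Xi$ is already pruned with four vertices: its $ab$-branch cannot fold onto the trunk edge, because the end has no $b$-edge.
\end{itemize}
So the step ``some retraction onto the pruned form fixes it'' is false, and the relocated tree is not $\overline{\Gamma\times\Xi}$.

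The repair is to glue on a fresh copy of $\Xi$ instead of relocating. The branch copy of $\Xi$ in $\Xi^{(+)}\times\Xi$ retracts onto the trunk copy, so $\overline{\Xi^{(+)}\times\Xi}=\overline{\Xi}$. Then part (2) gives
\[\overline{\Gamma\times\Xi}=\overline{(\Gamma\times\Xi^{(+)})\times\Xi}=\overline{\overline{\Gamma\times\Xi^{(+)}}\times\Xi},\]
which depends only on $\overline{\Gamma\times\Xi^{(+)}}$, and the same holds for $\Delta$. This is just $xz=xz^+z$. You had the difficulty reversed: the converse is the formal direction, and the forward direction is the cancellation statement, which your trunk argument already handles.
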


We are now in a position to state the main result of \cite{kambites:freeleft}.

\begin{thm}[Kambites {\cite[Theorem 3.16, Theorem 3.17]{kambites:freeleft}}]\label{thm:flem}
    Let $X$ be any non-empty set. Then the $(2,1,0)$-algebra $\operatorname{T}(X)$ is the free left Ehresmann/adequate monoid on $X$. Moreover, the subset given by $\operatorname{S}(X) := \operatorname{T}(X) \setminus \{\varepsilon\}$, where $\varepsilon$ is the trivial tree, is a $(2,1)$-algebra and is exactly the free left Ehresmann/adequate semigroup on $X$. These structures are freely generated in their respective signatures by the set of single-edge trees with distinct start and end vertices, edge-labelled by $x$ for each $x \in X$. The set of projections/idempotents is exactly the image of $\e(X)$ under the retraction map, i.e. the pruned left $X$-trees with coinciding start and end vertex.
\end{thm}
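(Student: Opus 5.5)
We verify the three parts of the statement for $\operatorname{T}(X)$ in turn: that it is a left Ehresmann monoid (indeed left adequate), that it has the universal property on $X$, and then the claims about $\operatorname{S}(X)$ and about projections.

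\textbf{Step 1: identities.} We first show that $\operatorname{T}(X)$ is a left adequate monoid. Since $\overline{\,\cdot\,}$ is a surjective morphism of $(2,1,0)$-algebras (Lemma \ref{lem:barismorphism}), each identity reduces to checking that the corresponding trees in $\lut(X)$ have equal retracts.
\begin{enumerate}
\item For associativity, $\Gamma\times(\Delta\times\Xi)=(\Gamma\times\Delta)\times\Xi$ holds already in $\lut(X)$.
\item For $\Gamma^+\Gamma=\Gamma$, the tree $\Gamma^{(+)}\times\Gamma$ consists of two copies of $\Gamma$ glued at their start vertices. The retraction folding the first copy onto the second is a morphism, so the retract equals $\overline{\Gamma}$.
\item Commutativity and idempotency of projections follow because gluing two trees at a common start=end vertex is symmetric, and the result already has start equal to end.
\item For $(\Gamma\Delta)^+=(\Gamma\Delta^+)^+$, both sides are the same underlying tree with the end moved to the start.
\end{enumerate}
For the adequacy quasi-identities, the second is exactly Lemma \ref{lem:barismorphism}(\ref{lem:barismorphism:adequatequasi}). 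For the first, suppose a pruned $\Gamma$ satisfies $\Gamma^2=\Gamma$. Then the trunk length is additive under gluing, and retraction preserves the trunk, because the trunk is the unique path from start to end and morphisms send it to such a path of the same length. It follows that the trunk is empty, so $\Gamma=\Gamma^+$.

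\textbf{Step 2: freeness.} By Proposition \ref{prop:lutgenerated}, $\operatorname{T}(X)$ is generated by $X$, so any extension of a map $\chi:X\to M$ is unique. For existence, we define $\Gamma\mapsto$ the evaluation of a term representing $\Gamma$, built by the inductive construction of trees: a tree decomposes along its trunk as products of generators interleaved with $(\cdot)^+$ of the branching subtrees. The main obstacle is well-definedness, i.e. showing that any two terms whose trees have the same retract evaluate equally in every left Ehresmann monoid $M$. We plan to do this in two stages.
\begin{enumerate}
\item First we show that terms representing isomorphic trees in $\lut(X)$ evaluate equally, using commutativity of projections and the identity $(xy)^+=(xy^+)^+$ to rearrange branches into a normal form.
\item Second, since a retraction is a composite of elementary foldings of one branch into another with the same label read from the same vertex, we check that one folding preserves the value. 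This reduces to identities of the form $e(uf)^+(ug)^+=e(uf(ug)^+)^+$ together with $x^+x=x$, which we derive in $M$.
\end{enumerate}
This is where the real work lies, and we follow the strategy of \cite{kambites:free}.

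\textbf{Step 3: the remaining claims.} The trivial tree is not a product of nontrivial trees, nor a $^+$ of one, so $\operatorname{S}(X)$ is a subalgebra. Restricting the argument of Step 2 to terms without $1$ gives the semigroup case. For the last claim, projections are exactly retracts of trees whose start vertex equals the end vertex. Moreover, idempotents coincide with projections by the first quasi-identity of Step 1. Finally, the free Ehresmann and free adequate objects coincide, since $\operatorname{T}(X)$ is adequate and free in the larger class.
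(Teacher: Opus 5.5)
\textbf{Verdict: the overall plan is right, but Stage 2 of your Step 2 rests on a false identity, so freeness is not yet established.}

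Your Step 1 and Step 3 are sound. The paper does not prove this theorem; it quotes it from Kambites. Your Step 2 has the same architecture as that cited argument: the maps $\tau$ and $\rho$ recalled before Theorem~\ref{thm:rhoprops} are exactly your ``evaluation of a canonical term''. Your Stage 1 is Theorem~\ref{thm:rhoprops}\eqref{thm:rhoprops:mor}, and your Stage 2 is Theorem~\ref{thm:rhoprops}\eqref{thm:rhoprops:bar}. Reducing a retraction to one folding at a time is also fine.

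\textbf{The gap.} Stage 2 is where freeness is actually decided, and the identity you plan to derive, $e(uf)^+(ug)^+=e(uf(ug)^+)^+$, does not hold in left Ehresmann monoids. Test it in $\operatorname{T}(X)$ itself, which your Step 1 shows is left Ehresmann.
\begin{enumerate}
\item With $e=f=g=1$ and $u=x$, the left side is $x^+x^+=x^+$, a single $x$-edge at the root. The right side is $(xx^+)^+=(x^2)^+$, a path of length two.
\item With $e=u=1$, $f=x$, $g=y$ and $x\neq y$, the identity reads $x^+y^+=(xy^+)^+$, which is again false.
\end{enumerate}
No retraction has this shape anyway. A retraction never reattaches a branch at the end of another. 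It also never merges two branches along a common prefix; that would be the left $h$ condition, which fails in $\fleh(X)$ for $|X|\geq 2$. An elementary folding simply deletes a hanging piece $d\cup\cone_\Gamma(d)$ at a vertex $v$ whose cone maps, under the retraction, into what lies below a sibling edge $d'$ at $v$ carrying the same label $x$.

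\textbf{What is missing.} Such a deletion preserves the value because of an order relation that depends on a tree morphism, not because of an unconditional identity. The lemma you need is: if $C,A\in\e(X)$ and there is a morphism $C\to A$, then $\tau(A)\le\tau(C)$ in $P(M)$.

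Prove it by induction on the height of $C$. Each root edge $d$ of $C$, labelled $x$, maps to a root edge $d'$ of $A$ with the same label, and $\cone_C(d)$ maps into $\cone_A(d')$. By induction, $a:=\tau(\cone_A(d'))\le c:=\tau(\cone_C(d))$. Hence $\tau(A)\le(\chi(x)a)^+=(\chi(x)ca)^+\le(\chi(x)c)^+$, using $(pq)^+\le p^+$. Taking the product over all root edges $d$ gives the claim.

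With this lemma, the two kinds of folding are handled as follows.
\begin{enumerate}
\item \emph{Folding onto a non-trunk sibling.} This removes a factor $(\chi(x)\tau(\cone_\Gamma(d)))^+$ that is absorbed by the factor $(\chi(x)\tau(\cone_\Gamma(d')))^+$ lying below it in the same product. So the value at $v$, and every value computed from it, is unchanged.
\item \emph{Folding onto the trunk edge leaving $v=v_i$.} Let $\Delta$ be the part of $\Gamma$ from $v_{i+1}$ onwards and $c=\tau(\cone_\Gamma(d))$. You need $(\chi(x)c)^+\chi(x)\rho(\Delta)=\chi(x)\rho(\Delta)$. The lemma together with Stage 1 gives $\rho(\Delta)^+=\tau(\Delta^{(+)})\le c$, hence $\rho(\Delta)=c\,\rho(\Delta)$. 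Then $(\chi(x)c)^+\chi(x)c=\chi(x)c$ finishes it. This is where $a^+a=a$ is genuinely used.
\end{enumerate}
Until this monotonicity lemma is supplied, Stage 2, and with it freeness, is not established.

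\textbf{A smaller point on Step 3.} The recursion for $\tau$ uses $\tau(\varepsilon)=1_M$ at every leaf, so ``restrict to terms without $1$'' is not literal. It is cleaner to adjoin an identity $1$ with $1^+=1$ to a left Ehresmann semigroup $S$, apply the monoid result, and use that $\operatorname{S}(X)$ is $X$-generated as a $(2,1)$-algebra, so its image lands in $S$.
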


There is, of course, a dual to Theorem \ref{thm:flem} for free right Ehresmann/adequate monoids. We will mainly concern ourselves with only the left case throughout. For a set $X$, we henceforth denote the free left Ehresmann monoid on $X$ by $\fleh(X)$.

This description in terms of directed graphs mirrors those for free objects in closely related classes, including free inverse monoids \cite{munn:freeinv} and free ample/birestriction monoids \cite{fountain:freeample}.

\section{\texorpdfstring{$h$}{h}-Ehresmann semigroups}\label{sec:hEhr}

This paper mainly concerns a subclass of (left, right) Ehresmann semigroups. These will be those satisfying certain additional identities considered in \cite{batbedat:connections,batbedat:identities,fountain:h,fountain:munntype,gomes:fund}. Semigroups which satisfy the forthcoming $h$-Ehresmann or $h$-adequate conditions were previously known as \textit{weakly hedged} or \textit{hedged} semigroups respectively \cite{fountain:munntype,gomes:fund}.

\subsection{The \texorpdfstring{$h$}{h} conditions}

Let $(S,\cdot,{}^+)$ be a left Ehresmann semigroup. We say $S$ is \textit{left $h$-Ehresmann} if $S$ \textit{satisfies the left $h$ condition}:\[(xz^+y)^+ = (xy)^+(xz)^+ \text{ for all }x,y,z \in S.\]Let $(S,\cdot,{}^\ast)$ be a right Ehresmann semigroup. We say $S$ is \textit{right $h$-Ehresmann} if $S$ \textit{satisfies the right $h$ condition}:\[(xz^\ast y)^\ast = (xy)^\ast(zy)^\ast \text{ for all }x,y,z \in S.\]We say an Ehresmann semigroup $(S,\cdot,{}^+,{}^\ast)$ is \textit{$h$-Ehresmann} if it satisfies the left $h$ condition as a left Ehresmann semigroup, and the right $h$ condition as a right Ehresmann semigroup.

We say a [resp. left/right/two-sided] adequate semigroup is [resp. \textit{left}/\textit{right}/\textit{two-sided}] \textit{$h$-adequate} if it satisfies the corresponding $h$ condition(s).

One may prefer the following equivalent definitions for $h$-Ehresmann semigroups. The third condition in Proposition \ref{prop:hTFAE} corresponds to the original definition of right $h$-adequate semigroups introduced by Fountain \cite{fountain:h}.

\begin{prop}\label{prop:hTFAE}
    Let $S$ be an Ehresmann semigroup. Then the following are equivalent:
    \begin{enumerate}
        \item $S$ is $h$-Ehresmann.\label{prop:hTFAE:hAd}
        \item $(x z^\ast y)^+ = (xy)^+(xz^\ast)^+$ and $(xz^+y)^\ast = (xy)^\ast(z^+y)^\ast$ for all $x,y,z \in S$.\label{prop:hTFAE:identities}
        \item The functions\[\alpha_x : P(S) \to P(S), \quad e\mapsto (ex)^\ast,\]\[\beta_x: P(S) \to P(S), \quad e\mapsto (xe)^+\]are semigroup morphisms for all $x \in S$.\label{prop:hTFAE:funcs}
    \end{enumerate}
\end{prop}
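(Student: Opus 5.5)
The plan is to prove the cycle of implications \eqref{prop:hTFAE:hAd} $\Rightarrow$ \eqref{prop:hTFAE:identities} $\Rightarrow$ \eqref{prop:hTFAE:funcs} $\Rightarrow$ \eqref{prop:hTFAE:hAd}. Each step should only need the left and right Ehresmann axioms, the linking identities, and the fact that $P(S)$ is a commutative subsemilattice on which both ${}^+$ and ${}^\ast$ act as the identity.

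For \eqref{prop:hTFAE:hAd} $\Rightarrow$ \eqref{prop:hTFAE:identities}, substitute $z^\ast$ for $z$ in the left $h$ condition. This gives $(x(z^\ast)^+y)^+ = (xy)^+(xz^\ast)^+$. The linking identity $(z^\ast)^+ = z^\ast$ then turns it into the first identity of \eqref{prop:hTFAE:identities}. Dually, substitute $z^+$ for $z$ in the right $h$ condition and use $(z^+)^\ast = z^+$; this gives the second identity.

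For \eqref{prop:hTFAE:identities} $\Rightarrow$ \eqref{prop:hTFAE:funcs}, first note that $\alpha_x$ and $\beta_x$ land in $P(S)$ because $P(S)$ is the common image of ${}^+$ and ${}^\ast$. Let $e,f \in P(S)$.
\begin{itemize}
\item For $\beta_x$, put $z = e$ and $y = f$ in the first identity. Since $e^\ast = e$, this gives $(xef)^+ = (xf)^+(xe)^+$. By commutativity of projections this equals $\beta_x(e)\beta_x(f)$.
\item For $\alpha_x$, use the second identity with the variables renamed: take $e$ in place of $x$, $f$ in place of $z$, and $x$ in place of $y$. Since $f^+ = f$, this gives $(efx)^\ast = (ex)^\ast(fx)^\ast$, that is, $\alpha_x(ef) = \alpha_x(e)\alpha_x(f)$.
\end{itemize}

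For \eqref{prop:hTFAE:funcs} $\Rightarrow$ \eqref{prop:hTFAE:hAd}, first use the left Ehresmann axiom $(ab)^+ = (ab^+)^+$ to rewrite
\[(xz^+y)^+ = (xz^+y^+)^+ = \beta_x(z^+y^+).\]
Multiplicativity of $\beta_x$ turns this into $(xz^+)^+(xy^+)^+$. Applying the same axiom again gives $(xz)^+(xy)^+$, and commutativity in $P(S)$ yields the left $h$ condition. The right $h$ condition follows symmetrically. Use $(ab)^\ast = (a^\ast b)^\ast$ to write $(xz^\ast y)^\ast = (x^\ast z^\ast y)^\ast = \alpha_y(x^\ast z^\ast)$, then split this as $(x^\ast y)^\ast(z^\ast y)^\ast = (xy)^\ast(zy)^\ast$.

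I expect no real obstacle here. The only care needed is in choosing the variable substitutions, especially the renaming in the $\alpha_x$ step, and in making sure that each application of the linking identities and of commutativity of $P(S)$ is legitimate.
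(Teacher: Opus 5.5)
Your proof is correct. It differs from the paper's mainly in how much it proves directly.

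The paper handles only (1) $\Leftrightarrow$ (2) by hand. Its (1) $\Rightarrow$ (2) is exactly your substitution of $z^\ast$ (resp.\ $z^+$) together with the linking identities. Its (2) $\Rightarrow$ (1) applies the first identity of (2) with $z^+ = (z^+)^\ast$ in place of $z^\ast$, then absorbs the projection via $(xz^+)^+ = (xz)^+$, and handles the right condition dually. The equivalence (1) $\Leftrightarrow$ (3) is not proved there at all; the paper outsources it to a lemma of Fountain.

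Your cycle (1) $\Rightarrow$ (2) $\Rightarrow$ (3) $\Rightarrow$ (1) replaces that citation with two short computations.
\begin{itemize}
\item Specialising the identities of (2) to projections, using $e^\ast = e$ and $f^+ = f$, gives multiplicativity of $\beta_x$ and $\alpha_x$. The paper never links (2) and (3) directly.
\item Your (3) $\Rightarrow$ (1) step is essentially the paper's (2) $\Rightarrow$ (1) computation, rerouted through $\beta_x$. You use $(ab)^+ = (ab^+)^+$ to make the argument of $\beta_x$ the projection $z^+y^+$. Dually you use $(ab)^\ast = (a^\ast b)^\ast$ to make the argument of $\alpha_y$ the projection $x^\ast z^\ast$.
\end{itemize}

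So your version is fully self-contained, using only the Ehresmann axioms, the linking identities and the fact that $P(S)$ is a commutative subsemilattice fixed by both unary operations. The cost is a slightly longer argument. The paper's version is shorter but relies on the external lemma for everything involving (3).
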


\begin{proof}
    The equivalence of \eqref{prop:hTFAE:hAd} and \eqref{prop:hTFAE:funcs} is observed in \cite[Lemma 2.7]{fountain:munntype}.
    
    Now suppose \eqref{prop:hTFAE:hAd}: $S$ is $h$-Ehresmann. Let $x,y,z \in S$ be given. Since $(z^\ast)^+ = z^\ast$ and $S$ is in particular left $h$-Ehresmann, we see that $(x z^\ast y)^+ =(x(z^\ast)^+ y)^+ = (xy)^+(xz^\ast)^+$. The second condition of \eqref{prop:hTFAE:identities} is seen dually. Conversely, suppose \eqref{prop:hTFAE:identities}: $(x z^\ast y)^+ = (xy)^+(xz^\ast)^+$ and $(xz^+y)^\ast = (xy)^\ast(z^+y)^\ast$ for all $x,y,z \in S$. For any given $x,y,z \in S$, since $z^+ = (z^+)^\ast$, it is easily seen that \[(xz^+y)^+ = (x(z^+)^\ast y)^+ = (xy)^+(x(z^+)^\ast)^+ = (xy)^+(xz^+)^+ = (xy)^+(xz)^+\]where the final equality holds by the defining Ehresmann identity $(ab)^+ = (ab^+)^+$. Thus $S$ satisfies the left $h$ condition. The right $h$ condition is seen dually and hence $S$ is $h$-Ehresmann. \qedhere
    
\end{proof}

\subsection{Examples of \texorpdfstring{$h$}{h}-Ehresmann semigroups}\label{sec:examples}

The class of $h$-Ehresmann semigroups contains many well-studied classes: any reduced Ehresmann monoid is $h$-Ehresmann, for example. Perhaps less trivially, any \textit{Sch\"utzenberger product} of right cancellative semigroups is left $h$-adequate \cite[Proposition 3.5]{fountain:munntype}. Fountain's \textit{left type B} semigroups \cite{fountain:adequate} are also such examples. 

Recall that a left Ehresmann semigroup $S$ is called \textit{restriction} if $xy^+ = (xy)^+x$ for all $x,y \in S$.
Restriction semigroups are left $h$-Ehresmann \cite[Lemma 2.9]{fountain:munntype} though not necessarily left $h$-adequate. Hence the class of left ample semigroups and the widely studied class of inverse semigroups are also examples. The latter two classes also form examples of left $h$-adequate semigroups. However there do exist left $h$-Ehresmann semigroups which are not inverse, left ample nor restriction. The following such example was observed by Aird and the author \cite{aird:growth} and served as the catalyst for this work.

\begin{prop}[{\cite[Lemma 3.3]{aird:growth}}]\label{prop:freemono}
    The free left Ehresmann monoid of rank $1$ is left $h$-adequate.
\end{prop}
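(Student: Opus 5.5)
The plan is to exploit the fact that in rank $1$ every projection of $\fleh(\{x\})$ is determined by a single natural number, namely the \emph{height} of the tree. By Theorem \ref{thm:flem}, $\operatorname{T}(\{x\})$ is already left adequate, so it only remains to verify the left $h$ condition $(ac^+b)^+ = (ab)^+(ac)^+$ for all pruned left $\{x\}$-trees $a,b,c$.

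\textbf{Step 1: height is well behaved.} For $\Gamma \in \lut(\{x\})$, write $h(\Gamma)$ for the maximal length of a directed path from the start vertex, and $t(\Gamma)$ for the length of the trunk. I claim $h(\overline{\Gamma}) = h(\Gamma)$. On the one hand, $\overline{\Gamma}$ is a subtree of $\Gamma$ (it is the image of a retraction), so $h(\overline{\Gamma}) \le h(\Gamma)$. On the other hand, a morphism sends a directed path of length $k$ starting at the start vertex to a directed path of length $k$ starting at the start vertex, which gives the reverse inequality. Similarly $t(\overline{\Gamma}) = t(\Gamma)$, since the trunk is mapped onto the trunk. From the definitions of $\times$ and ${}^{(+)}$ we then read off
\[h(\Gamma\Delta) = \max\bigl(h(\Gamma),\, t(\Gamma)+h(\Delta)\bigr), \qquad t(\Gamma\Delta) = t(\Gamma)+t(\Delta),\]
\[h(\Gamma^+) = h(\Gamma), \qquad t(\Gamma^+)=0.\]

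\textbf{Step 2: classify the projections.} By Theorem \ref{thm:flem}, projections are pruned trees with coinciding start and end vertex. I will show that such a tree is a single path $x^n$ read from the start vertex, which I denote $e_n$. Suppose a vertex has two outgoing edges. Since all labels are $x$, the subtree below one of them, of depth $d$, maps by a morphism onto a path of length $d$ inside a branch of depth at least $d$ below the other. Fixing everything else gives a non-identity retraction, contradicting prunedness. Hence every vertex has out-degree at most $1$, so the tree is a path, and it is pruned.

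Consequently $\Gamma^+ = e_{h(\Gamma)}$ for every $\Gamma \in \operatorname{T}(\{x\})$, by Step 1. Moreover $e_ie_j = \overline{e_i\times e_j} = e_{\max(i,j)}$, since the shorter path folds into the longer one.

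\textbf{Step 3: verify the identity.} Using Steps 1 and 2, we compute
\[(ac^+b)^+ = e_{m}, \qquad m = \max\bigl(h(a),\, t(a)+h(c),\, t(a)+h(b)\bigr),\]
since $t(c^+)=0$. On the other side,
\[(ab)^+(ac)^+ = e_{\max(h(a),\,t(a)+h(b))}\, e_{\max(h(a),\,t(a)+h(c))} = e_m.\]
The two sides agree, so the left $h$ condition holds and $\fleh(\{x\})$ is left $h$-adequate.

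\textbf{Main obstacle.} The only genuinely non-formal point is Step 2, the retraction argument that branching is impossible in a pruned rank-$1$ tree with coinciding start and end. The folding must be done carefully: one maps the shallower branch into the deeper one, and the map must fix the rest of the tree so that it is idempotent. This is exactly where the rank-$1$ hypothesis is used, since with two letters branches labelled differently cannot be folded together.
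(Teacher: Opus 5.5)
Your proof is correct and complete. Theorem \ref{thm:flem} gives left adequacy. Height and trunk length are invariant under retraction, and your folding argument shows every projection is a bare path $e_n$. Hence $\Gamma^+ = e_{h(\Gamma)}$ and $e_ie_j = e_{\max(i,j)}$, and the left $h$ condition reduces to an equality of maxima.

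The paper gives no argument of its own for this statement, only the citation \cite[Lemma 3.3]{aird:growth}. The useful comparison is therefore with what the paper's later machinery yields.

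\textbf{The route through the paper's machinery.} Apply your Step 2 folding to any two non-trunk edges leaving a common vertex of an arbitrary pruned left $\{x\}$-tree. This shows every branch of such a tree is a single path, so $\splay$ fixes every element of $\operatorname{T}(\{x\})$. The surjective morphism $\widehat{\cdot}\colon \fleh(\{x\}) \to \h(\{x\})$ of Theorem \ref{thm:hatmor} is then the identity, and Corollary \ref{cor:hislefth} gives left $h$-adequacy.

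\textbf{Comparison.} The paper's route explains structurally why rank $1$ is exceptional: splaying is invisible there, whereas differently labelled edges at a branching vertex cannot be folded. However, it rests on most of Section \ref{sec:splaying}, notably Lemma \ref{lem:splay}\eqref{lem:splay:splayrhogamma}, Theorem \ref{thm:hatmor} and the verification of the left $h$ identity in $\h(X)$. Your route needs only Kambites' description and a numerical invariant. It also gives more explicit information: the projections form the chain $e_0 > e_1 > e_2 > \cdots$, and $\Gamma^+$ depends only on the height of $\Gamma$.
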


It follows from Proposition \ref{prop:freemono} that the free left $h$-adequate monoid of rank $1$ is exactly the free left Ehresmann monoid of rank $1$. In higher ranks, this is not the case, as free left Ehresmann monoids of rank at least $2$ fail the left $h$ condition \cite[Proposition 3.2.31]{heath:graph}. However, we observe the following (perhaps surprising) fact that they do satisfy the right $h$ condition, using the left $X$-trees of Kambites \cite{kambites:freeleft}.

\begin{thm}\label{thm:flehisrhad}
    Free left Ehresmann monoids are right $h$-adequate.
\end{thm}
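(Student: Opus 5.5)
The plan is to exhibit an explicit unary operation ${}^\ast$ on $\operatorname{T}(X)$ and to check three things directly using retractions: the right Ehresmann axioms, the right adequate quasi-identities, and the right $h$ condition. For a left $X$-tree $\Gamma$ with end vertex $v$, let $\Gamma_v$ be the subtree of all vertices and edges reachable from $v$, with start and end both at $v$; this lies in $\e(X)$. For $\Gamma \in \operatorname{T}(X)$ I would define $\Gamma^\ast := \overline{\Gamma_v}$.

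\textbf{Step 1: compatibility with retraction.} The first lemma is that, for any $\Gamma \in \lut(X)$, $\overline{\Gamma}_{\bar v}$ equals $\overline{\Gamma_v}$. A retraction of $\Gamma$ fixes the end vertex and preserves directed paths, so it maps $\Gamma_v$ into itself. Hence it restricts to a retraction of $\Gamma_v$, and uniqueness in Proposition \ref{prop:retractconfluent} identifies the two pruned trees.

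\textbf{Step 2: the right Ehresmann identities.} Using Step 1 and Lemma \ref{lem:barismorphism}, these become statements about gluing before pruning.
\begin{itemize}
\item $\Gamma\Gamma^\ast = \Gamma$: the tree $\Gamma\times\Gamma_v$ retracts onto $\Gamma$ by folding the glued copy of $\Gamma_v$ onto the original one.
\item Projections: $P$ is exactly the set of pruned trees with coinciding start and end, and on these ${}^\ast$ is the identity. So $(e^\ast f^\ast)^\ast = e^\ast f^\ast = f^\ast e^\ast$ follows from the left Ehresmann structure of Theorem \ref{thm:flem}.
\item $(\Gamma\Delta)^\ast = (\Gamma^\ast\Delta)^\ast$: the end-subtree of $\Gamma\times\Delta$ depends on $\Gamma$ only through $\Gamma_v$.
\end{itemize}
For the last point I would split into two cases. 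If the trunk of $\Delta$ is non-empty, the end-subtree of $\Gamma\times\Delta$ is that of $\Delta$. If the trunk of $\Delta$ is empty, it is $\Gamma_v\times\Delta$.

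\textbf{Step 3: the right $h$ condition.} The same case split handles $(xz^\ast y)^\ast = (xy)^\ast(zy)^\ast$.
\begin{itemize}
\item If $y$ has non-empty trunk, all three of $(xz^\ast y)^\ast$, $(xy)^\ast$ and $(zy)^\ast$ equal $y^\ast$, and $y^\ast y^\ast = y^\ast$.
\item If $y$ is a projection, the left side is $\overline{x_v\times z_v\times y}$. The right side is $\overline{x_v\times y\times z_v\times y}$, and this agrees with the left side because projections commute and are idempotent.
\end{itemize}
Idempotents of $\operatorname{T}(X)$ are projections by Theorem \ref{thm:flem}, and these are fixed by ${}^\ast$. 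This gives the first right adequate quasi-identity.

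\textbf{Step 4, the main obstacle: the cancellation quasi-identity.} It remains to show that $\overline{Z\times X'}=\overline{Z\times Y'}$ implies $\overline{Z_v\times X'}=\overline{Z_v\times Y'}$. My approach is to take an isomorphism between the two pruned products. It fixes start and end, so by Step 1 it restricts to the end-subtrees of $\overline{Z\times X'}$ and $\overline{Z\times Y'}$. The delicate point is that pruning $Z\times X'$ may fold parts of $X'$ into $Z$ at places away from the end vertex, that is, into branches of $Z$ outside $Z_v$. I would first try to show that the retraction of $Z_v\times X'$ is the restriction to the part reachable from $v$, where $v$ is the end of $Z$, of the retraction of $Z\times X'$. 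This would be a direct analysis in the style of the proof of Lemma \ref{lem:barismorphism}\eqref{lem:barismorphism:adequatequasi}. If that fails, I would fall back on the right-adequacy argument of Gould and Johnson \cite[Corollary 6.6]{gould:coherent}, combined with Steps 1–3.
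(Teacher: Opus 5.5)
Your proposal is correct and follows the paper's route. Your $\overline{\Gamma_v}$ is exactly the last branch $B_k$ that the paper takes as $\Gamma^\ast$, your Step 3 case split on whether $y$ has trunk edges is the paper's proof of the right $h$ condition, and the paper simply cites Gould--Johnson for the right Ehresmann and right adequate structure (your Steps 1, 2 and 4). Your worry in Step 4 is unfounded: every retraction of $Z\times X'$ fixes the trunk vertex $v$, so $X'$ can only map into the part reachable from $v$, and your restriction argument goes through (the paper uses the same argument in Theorem \ref{thm:hishadequate}).
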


\begin{proof}
    The free left Ehresmann monoid of rank $0$ is the trivial $(2,1,0)$-algebra which is certainly right $h$-adequate. Now fix some non-empty set $X$. Consider any pruned left $X$-tree $\Gamma \in \fleh(X)$. If $\Gamma$ is an idempotent, define $\Gamma^\ast := \Gamma$. Otherwise, we may decompose $\Gamma$ as a gluing product $\Delta \times w \times T$ for unique left $X$-trees $\Delta \in \fleh(X)$, $w$ a word over $X$ (considered as a left $X$-tree consisting of simply a trunk labelled $w$), and an idempotent $T \in \fleh(X)$. Then define $\Gamma^\ast = T$. It is shown in \cite[Proposition 6.5, Corollary 6.6]{gould:coherent} that defining the operation ${}^\ast$ this way imbues $\fleh(X)$ with an adequate (in particular a right adequate) structure.

    To complete our proof, we need show the right $h$ condition. Note from above that the ${}^\ast$ operation on $\fleh(X)$ is defined to be \[\Gamma^\ast = \begin{cases}
        \Gamma & \text{if }\Gamma\text{ is idempotent,}\\
        T & \text{if }\Gamma\text{ is not idempotent where } \Gamma = \Delta \times w \times T \text{ as above.}
    \end{cases}\]Let $x,y,z \in \fleh(X)$ be given. We split into the following two cases.
    \begin{description}
        \item[Case 1] Suppose $y$ has no trunk edges. Then $y^\ast = y$. Hence \[(xz^\ast y)^\ast = (x^\ast z^\ast y^\ast)^\ast = x^\ast z^\ast y^\ast\]where the first equality follows from the right Ehresmann identity $(a^\ast b)^\ast = (ab)^\ast$. Similarly\[(xy)^\ast(zy)^\ast = (x^\ast y^\ast)^\ast(z^\ast y^\ast)^\ast = x^\ast y^\ast z^\ast y^\ast = x^\ast z^\ast y^\ast\]and the right $h$ condition holds.

        \item[Case 2] Suppose $y$ has at least one trunk edge. Since the tree $xz^\ast y$ has at least one trunk edge, we have by definition of ${}^\ast$ that $(xz^\ast y)^\ast = y^\ast$. Similarly, since both $xy$ and $zy$ have at least one trunk edge, we have $(xy)^\ast = y^\ast$ and $(zy)^\ast = y^\ast$. Thus $(xy)^\ast(zy)^\ast = y^\ast y^\ast = y^\ast$ and the right $h$ condition holds. \qedhere
    \end{description}
        
\end{proof}

Similar to the general Ehresmann case, (left, right) $h$-Ehresmann semigroups (monoids) form a variety. Thus we are enabled to study free objects in our context. We remark here that the free left Ehresmann monoid on $X$ being right $h$-Ehresmann certainly does not imply that it is the \textit{free} right $h$-Ehresmann monoid on $X$ -- note the differing signature.

Free right $h$-Ehresmann semigroups and monoids in all ranks have been previously described by Fountain \cite{fountain:h} in terms of certain Rees quotients of free products. The framework of $X$-trees set out in Section \ref{sec:trees} allows for much easier visualisation of free objects for the classes of Ehresmann semigroups, see for example the proof of Theorem \ref{thm:flehisrhad}. Our goal is to provide an alternative description in this manner for free left and right $h$-Ehresmann semigroups and monoids to aid their study, in particular one which recovers Kambites' description in the monogenic case. In rank $0$, it is clear that the free left/right $h$-Ehresmann monoid is the trivial $(2,1,0)$-algebra, so we mainly concern ourselves with ranks at least $1$.

In our aim to describe these, we begin with the following result mirroring that for free Ehresmann semigroups \cite[Proposition 2.2]{kambites:free}, which allows us to focus on the monoid case. 

\begin{prop}\label{prop:adjoiningidentity}
    Let $X$ be a set.
    \begin{enumerate}
        \item Let $F(X)$ be the free left \emph{[}resp. right\emph{]} $h$-Ehresmann semigroup on $X$. The free left \emph{[}resp. right\emph{]} $h$-Ehresmann monoid on $X$ is isomorphic to $F(X)$ with an adjoined element $1$ which acts as a multiplicative identity and has $1^+ := 1$ \emph{[}resp. $1^\ast:= 1$\emph{]}.
        \item Let $A(X)$ be the free left \emph{[}resp. right\emph{]} $h$-adequate semigroup on $X$. The free left \emph{[}resp. right\emph{]} $h$-adequate monoid on $X$ is isomorphic to $A(X)$ with an adjoined element $1$ which acts as a multiplicative identity and has $1^+ := 1$ \emph{[}resp. $1^\ast:= 1$\emph{]}.
    \end{enumerate}
    
\end{prop}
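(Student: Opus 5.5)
We verify directly that $M := F(X)\cup\{1\}$ has the universal property of the free left $h$-Ehresmann monoid on $X$; the right-handed case and part (2) follow by the same argument. This mirrors \cite[Proposition 2.2]{kambites:free}.

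\emph{Step 1: $M$ lies in the class.} We extend multiplication by $1m=m1=m$ and set $1^+=1$. Associativity is immediate. We then check each axiom with some variables equal to $1$.
\begin{itemize}
\item $x^+x=x$ holds for $x=1$.
\item $(x^+y^+)^+=x^+y^+=y^+x^+$ reduces to $(y^+)^+=y^+$, which holds in $F(X)$.
\item $(xy)^+=(xy^+)^+$ with $y=1$ is trivial; with $x=1$ it becomes $y^+=(y^+)^+$.
\item The left $h$ condition $(xz^+y)^+=(xy)^+(xz)^+$ splits into cases.
\begin{itemize}
\item If $z=1$, it reads $(xy)^+=(xy)^+x^+$. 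This holds because $(xy)^+ \le x^+$ in any left Ehresmann semigroup: $(xy)^+ = (xy^+)^+ = (x^+xy^+)^+$, and one checks $(x^+a)^+ = x^+a^+$.
\item If $x=1$, it reads $(z^+y)^+=y^+z^+$, which is the left Ehresmann identity $(z^+y)^+=z^+y^+$.
\item If $y=1$, it reads $(xz^+)^+=x^+(xz)^+$, which follows from $(xz)^+\le x^+$ as before.
\end{itemize}
Combinations with more than one variable equal to $1$ reduce similarly.
\end{itemize}
In the adequate case we must also check the quasi-identities. Idempotents of $M$ are $1$ and the idempotents of $F(X)$, so $x^2=x\Rightarrow x=x^+$ holds. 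For $xz=yz\Rightarrow xz^+=yz^+$, the case $z=1$ is trivial. When $z\neq 1$ but, say, $x=1$, we need $z=yz$ to imply $z^+=yz^+$, and this does follow from the quasi-identity applied inside $A(X)$ to the pair $z^+z=yz$. The remaining subtle case is an equation $xz=yz$ with $x$ or $y$ equal to $1$; this is handled the same way, by replacing $1$ with $z^+$.

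\emph{Step 2: Universal property.} Let $N$ be any left $h$-Ehresmann monoid and $\chi:X\to N$ any map. The reduct of $N$ forgetting the constant is a left $h$-Ehresmann semigroup, so $\chi$ extends uniquely to a $(2,1)$-morphism $\phi:F(X)\to N$. Extend $\phi$ by $1\mapsto 1_N$; this respects products and $^+$ because $1_N^+=1_N$. Uniqueness holds because any monoid morphism must send $1\mapsto 1_N$ and agree with $\phi$ on $F(X)$, which is generated by $X$.

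\emph{Main obstacle.} One point needs care: that $1$ is genuinely new, i.e. $F(X)$ has no identity element. Without this, $M$ would still be free but the element $1$ would not be an adjoined identity in the intended sense. Freeness of $M$ itself does not depend on it. To show $F(X)$ has no identity, we map $F(X)$ onto the free semigroup $X^+$ with trivial $^+$ adjoined. This target is a left $h$-adequate semigroup, since $(xz^+y)^+$ and $(xy)^+(xz)^+$ are both set to $1$. A morphism onto a semigroup without identity shows that no element of $F(X)$ acts as an identity on the generators.

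The check of the quasi-identities involving $1$ in the adequate case, done above, is the main place where routine verification hides subtleties.
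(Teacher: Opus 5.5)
Your proof is correct and takes the same route as the paper. The paper likewise extends the unique $(2,1)$-morphism $F(X)\to N$ by $1\mapsto 1_N$ and then appeals to uniqueness of free objects. However, it never checks that $F(X)\sqcup\{1\}$ satisfies the identities (and, in part (2), the quasi-identities) of the class, and that appeal needs this. Your Step~1 supplies exactly this check, including the observation that $z=yz$ can be rewritten as $z^+z=yz$ to apply the adequate quasi-identity inside $A(X)$.

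The one misstep is your side remark about $1$ being genuinely new. The target $X^*$ with every $s^+=1$ does have an identity, namely the common image of every $x^+$, so it cannot show that $F(X)$ lacks one. This is harmless, because the paper adjoins $1$ as a disjoint new element and freeness does not depend on it. If you do want the fact, use the Brandt semigroup $B_2$ with $s^+=ss^{-1}$ instead. It is inverse, hence left $h$-adequate and so also left $h$-Ehresmann. Send every generator to $e_{12}$: the generated subalgebra $\{e_{11},e_{12},0\}$ has no identity, since $e_{12}e_{11}=0$. For the right-handed case, use $s^\ast=s^{-1}s$ dually.
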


\begin{proof}
    We prove here the statement for free left $h$-Ehresmann semigroups, with the statement for free right $h$-Ehresmann semigroups and for free left/right $h$-adequate semigroups following similarly.

    Define $L(X) := F(X) \sqcup \{1\}$ with the operations as above. We show the $(2,1,0)$-algebra $L(X)$ is free on $X$. Let $M$ be any left $h$-Ehresmann monoid with identity $1_M$ and suppose $\chi: X \to M$ is a function. Since $M$ is, in particular, a left $h$-Ehresmann semigroup, there is a unique $(2,1)$-morphism $\chi^\ast:F(X) \to M$ extending $\chi$. Define the function $f: L(X) \to M$ by\[m \mapsto\begin{cases}
        \chi^\ast(m) &\text{if }m \in F(X),\\
        1_M &\text{if }m = 1.
    \end{cases}\]It is easily verified that $f$ is a $(2,1,0)$-morphism extending $\chi$. It remains to show that $f$ is the unique such morphism. Indeed any other $(2,1,0)$-morphism $g: L(X) \to M$ extending $\chi$ must restrict to a $(2,1)$-morphism $F(X) \to M$ and by uniqueness, this restriction must be $\chi^\ast$. Moreover $g(1) = 1_M$, and thus we must have $f = g$.

    Hence $L(X)$ is free on $X$. By uniqueness of free objects then, $L(X)$ is necessarily the free left $h$-Ehresmann monoid on $X$.
\end{proof}

\section{Splaying of directed trees}\label{sec:splaying}

Throughout this section, $X$ should be assumed to be a fixed non-empty set. We aim to describe the free left $h$-Ehresmann monoid on $X$.

Consider a left $X$-tree $\Gamma \in \lut(X)$ and suppose that the trunk edges of $\Gamma$ have the labels $x_1,x_2,\dots, x_k \in X$. It is clear by the form of left $X$-trees that there exist unique left $X$-trees $B_0,B_1,\dots,B_k \in \e(X)$, such that $\Gamma$ is the gluing product \[\Gamma = B_0 \times x_1 \times B_1 \times \dots \times x_k \times B_k.\]We call $B_i$ the $i\textsuperscript{th}$ \textit{branch} of $\Gamma$, and call the decomposition above the \textit{branch decomposition}. Figure \ref{fig:branches} shows an example of a tree $\Gamma$ and its three branches. Our framework will require defining a new operation on $X$-trees and will use this branch decomposition. 

\begin{figure}[ht]
    \centering
    \begin{tikzpicture}
            \setup

            \node (C1) at (0,1) {\large$+$};
            \Vertex[x=1,y=1]{B1};
            \Vertex[x=0,y=2]{C2};
            \Vertex[x=1,y=2]{B2a};
            \Vertex[x=2,y=3]{B2aa};
            \Vertex[x=2,y=1]{B2ab};
            \node (C3) at (0,3) {\large$\times$};
        
            \Edge(C1)(B1)\draw (C1) -- (B1) node[midway,below=2pt] {$x$};
            \Edge(C1)(C2)\draw (C1) -- (C2) node[midway,left=2pt] {$y$};
            \Edge(C2)(B2a)\draw (C2) -- (B2a) node[midway,below=2pt] {$x$};
            \Edge(B2a)(B2aa)\draw (B2a) -- (B2aa) node[midway,left=2pt] {$y$};
            \Edge(B2a)(B2ab)\draw (B2a) -- (B2ab) node[midway,left=2pt] {$x$};
            \Edge(C2)(C3)\draw (C2) -- (C3) node[midway, left=2pt] {$x$};
        \end{tikzpicture}
        \hspace{3em}
        \begin{tikzpicture}
            \setup
            \node (_) at (0,0) {};
            \node (A) at (0,1.2) {\large$\PlusCross$};
            \Vertex[x=1,y=1.2]{U};
            \Edge(A)(U)\draw (A) -- (U) node[midway,above=2pt] {$x$};
        \end{tikzpicture}
        \hspace{3em}
        \begin{tikzpicture}
            \setup
            \node (_) at (0,0) {};
            \node (C2) at (0,1.2) {\large$\PlusCross$};
            \Vertex[x=1,y=1.2]{B2a};
            \Vertex[x=2,y=2.2]{B2aa};
            \Vertex[x=2,y=0.2]{B2ab};
        
            \Edge(C2)(B2a)\draw (C2) -- (B2a) node[midway,below=2pt] {$x$};
            \Edge(B2a)(B2aa)\draw (B2a) -- (B2aa) node[midway,left=2pt] {$y$};
            \Edge(B2a)(B2ab)\draw (B2a) -- (B2ab) node[midway,left=2pt] {$x$};
        \end{tikzpicture}
        \hspace{3em}
        \begin{tikzpicture}
            \setup
            \node (_) at (0,0) {};
            \node (pc) at (0,1.2) {\large$\PlusCross$};
        \end{tikzpicture}
        
    \caption{A left $\{x,y\}$-tree $\Gamma$ and its branches $B_0$, $B_1$ and $B_2$. Note that $B_2$ is the trivial, one-vertex tree. The branch decomposition of $\Gamma$ is $B_0 \times y \times B_1 \times x \times B_2$.}
    \label{fig:branches}
\end{figure}

\subsection{Splaying}

Consider the left $h$ condition $(xz^+y)^+ = (xy)^+(xz)^+$, say for elements $x,y,z \in X$. The left $X$-trees corresponding to both sides of this equation are seen in Figure \ref{fig:lefth}. Any description of free left $h$-Ehresmann monoids akin to that for Ehresmann monoids is required to identify these trees. 

\begin{figure}[ht]
    \centering
    \begin{tikzpicture}
        \setup
        
        \node (C2) at (0,1) {\large$\PlusCross$};
        \Vertex[x=1,y=1]{B2a};
        \Vertex[x=2,y=2]{B2aa};
        \Vertex[x=2,y=0]{B2ab};
    
        \Edge(C2)(B2a)\draw (C2) -- (B2a) node[midway,below=2pt] {$x$};
        \Edge(B2a)(B2aa)\draw (B2a) -- (B2aa) node[midway,left=2pt] {$y$};
        \Edge(B2a)(B2ab)\draw (B2a) -- (B2ab) node[midway,left=2pt] {$z$};
    \end{tikzpicture}
    \hspace{3em}
    \begin{tikzpicture}
        \setup

        \node (C2) at (0,1) {\large$\PlusCross$};
        \Vertex[x=1,y=1.5]{B2ax};
        \Vertex[x=1,y=0.5]{B2ay};
        \Vertex[x=2,y=2]{B2aa};
        \Vertex[x=2,y=0]{B2ab};
    
        \Edge(C2)(B2ax)\draw (C2) -- (B2ax) node[midway,above=2pt] {$x$};
        \Edge(C2)(B2ay)\draw (C2) -- (B2ay) node[midway,below=2pt] {$x$};
        \Edge(B2ax)(B2aa)\draw (B2ax) -- (B2aa) node[midway,below=2pt] {$y$};
        \Edge(B2ay)(B2ab)\draw (B2ay) -- (B2ab) node[midway,above=2pt] {$z$};
    \end{tikzpicture}
    \caption{The left $X$-trees corresponding to the left $h$ condition.}
    \label{fig:lefth}
\end{figure}

This motivates a new operation which we call \textit{splaying}. Let $B$ be a left $X$-tree. We call a vertex $v$ of $B$ a \textit{leaf} if there are no edges with initial vertex $v$. Recall that for any vertex $v$ of $B$, there is a unique (possibly empty) directed path from the start vertex to $v$. Call this path $p_v$. We consider $p_v$ as a left $X$-tree in $\e(X)$ itself, with coinciding start and end vertex in the location induced by the start vertex of $B$. Given a tree $B \in \e(X)$, we define a left $X$-tree called the \textit{splaying} of $B$ to be\[\splay(B) := \bigtimes_{\substack{v\text{ is a}\\{\text{leaf of }B}}} p_v.\]We extend this operation to all left $X$-trees by using the branch decomposition: if $\Gamma \in \lut(X)$ has branch decomposition $B_0 \times x_1 \times B_1 \times \dots \times x_k \times B_k$, we define the splaying of $\Gamma$ to be\[\splay(\Gamma) := \splay(B_0) \times x_1 \times \splay(B_1) \times \dots \times x_k \times \splay(B_k).\]

Recall the example from Figure \ref{fig:branches}. Of the three branches of $\Gamma$, the branches $B_0$ and $B_2$ have $\splay(B_0) = B_0$ and $\splay(B_2) = B_2$. The tree $\splay(B_1) \neq B_1$; in fact $\splay(B_1)$ is the leftmost tree in Figure \ref{fig:splay}. The rightmost tree in Figure \ref{fig:splay} is the tree $\splay(\Gamma)$.

\begin{figure}[h]
    \centering
    \scalebox{.93}{
    \begin{tikzpicture}
            \setup

            \node (C2) at (0,2) {\large$\PlusCross$};
            \Vertex[x=1,y=2.5]{B2ax};
            \Vertex[x=1,y=1.5]{B2ay};
            \Vertex[x=2,y=3]{B2aa};
            \Vertex[x=2,y=1]{B2ab};
        
            \Edge(C2)(B2ax)\draw (C2) -- (B2ax) node[midway,above=2pt] {$x$};
            \Edge(C2)(B2ay)\draw (C2) -- (B2ay) node[midway,below=2pt] {$x$};
            \Edge(B2ax)(B2aa)\draw (B2ax) -- (B2aa) node[midway,below=2pt] {$y$};
            \Edge(B2ay)(B2ab)\draw (B2ay) -- (B2ab) node[midway,above=2pt] {$x$};
        \end{tikzpicture}
        \hspace{3em}
        \begin{tikzpicture}
            \setup

            \node (C1) at (0,1) {\large$+$};
            \Vertex[x=1,y=1]{B1};
            \Vertex[x=0,y=2]{C2};
            \Vertex[x=1,y=2.5]{B2ax};
            \Vertex[x=1,y=1.5]{B2ay};
            \Vertex[x=2,y=3]{B2aa};
            \Vertex[x=2,y=1]{B2ab};
            \node (C3) at (0,3) {\large$\times$};
        
            \Edge(C1)(B1)\draw (C1) -- (B1) node[midway,below=2pt] {$x$};
            \Edge(C1)(C2)\draw (C1) -- (C2) node[midway,left=2pt] {$y$};
            \Edge(C2)(B2ax)\draw (C2) -- (B2ax) node[midway,above=2pt] {$x$};
            \Edge(C2)(B2ay)\draw (C2) -- (B2ay) node[midway,below=2pt] {$x$};
            \Edge(B2ax)(B2aa)\draw (B2ax) -- (B2aa) node[midway,below=2pt] {$y$};
            \Edge(B2ay)(B2ab)\draw (B2ay) -- (B2ab) node[midway,above=2pt] {$x$};
            \Edge(C2)(C3)\draw (C2) -- (C3) node[midway, left=2pt] {$x$};
        \end{tikzpicture}
        }
    \caption{The trees $\splay(B_1)$ and $\splay(\Gamma)$ for the $\Gamma$ in Figure \ref{fig:branches}.}
    \label{fig:splay}
\end{figure}

We consider here the interaction of $\splay$ with our previously defined operations $\times$ and ${}^{(+)}$ on $\lut(X)$.

\begin{lem}\label{lem:splay}
    Let $\Gamma,\Delta \in \lut(X)$. Then:
    \begin{enumerate}
        \item $\splay(\Gamma \times \Delta) = \splay(\Gamma) \times \splay(\Delta)$.\label{lem:splay:splaymor}
        \item $\splay\left(\overline{\splay(\Gamma)}\right) = \overline{\splay(\Gamma)}$.\label{lem:splay:splaybar}
        \item $\splay\left(\Gamma^{(+)}\right) = \splay\left(\splay(\Gamma)^{(+)}\right)$.\label{lem:splay:splaygamma+}
        \item Suppose $\rho$ is a retraction on $\Gamma$. Then $\overline{\splay\left( \Gamma \right)} = \overline{\splay(\rho(\Gamma))}$.\label{lem:splay:splayrhogamma}
    \end{enumerate}
\end{lem}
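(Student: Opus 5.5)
The plan is to treat the four parts in order. Parts \eqref{lem:splay:splaymor}--\eqref{lem:splay:splaygamma+} only need bookkeeping with branch decompositions. Part \eqref{lem:splay:splayrhogamma} is the real content.

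Throughout I use one observation. For $B \in \e(X)$, the tree $\splay(B)$ depends only on the multiset of labels of the paths $p_v$ with $v$ a leaf. Trivial paths contribute only the trivial tree, so they may be ignored. Call a tree in $\e(X)$ a \emph{bouquet} if it is a gluing product of paths from the root, i.e.\ its edges outside the root vertex form disjoint chains. Call $\Gamma \in \lut(X)$ \emph{splayed} if all its branches are bouquets. Clearly $\splay$ fixes every bouquet, and $\splay(\Gamma)$ is always splayed.

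For \eqref{lem:splay:splaymor}, write the branch decompositions $\Gamma = B_0 \times x_1 \times \dots \times x_k \times B_k$ and $\Delta = C_0 \times y_1 \times \dots \times C_l$. The branch decomposition of $\Gamma \times \Delta$ is the concatenation of the two, except that the middle branch is $B_k \times C_0$. So it suffices to show $\splay(B \times C) = \splay(B)\times\splay(C)$ for $B,C \in \e(X)$. The leaves of $B \times C$ are the leaves of $B$ together with the leaves of $C$, with the root path of each unchanged. The only exception is the root itself, which is a leaf of $B\times C$ only when both trees are trivial; its path is trivial and contributes nothing.

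For \eqref{lem:splay:splaybar}, I use that a retraction fixes its image pointwise and maps start to start and end to end. Hence it fixes the trunk, and $\overline{\splay(\Gamma)}$ is a subtree of $\splay(\Gamma)$ containing the whole trunk. Its $i$th branch is therefore a subtree of the $i$th branch of $\splay(\Gamma)$ containing the root. Such a subtree of a bouquet is again a bouquet, since it is a union of initial segments of the chains. So $\overline{\splay(\Gamma)}$ is splayed and is fixed by $\splay$.

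For \eqref{lem:splay:splaygamma+}, the tree $\Gamma^{(+)}$ has a single branch, namely the whole tree. Its leaves are exactly the leaves of $\Gamma$, where the end vertex counts as a leaf iff $B_k$ is trivial. The same holds for $\splay(\Gamma)^{(+)}$. Each leaf of $\Gamma$ lying in $B_i$ with root path label $u$ corresponds bijectively to a leaf of $\splay(\Gamma)$ in the splayed $i$th branch with the same label $u$. In both trees the path from the start vertex to that leaf is labelled $x_1\cdots x_i u$. Hence the two splayings, being products of the same multiset of labelled paths, coincide.

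For \eqref{lem:splay:splayrhogamma}, I will use the fact that if there are tree morphisms $\Phi \to \Psi$ and $\Psi \to \Phi$, then $\overline{\Phi} = \overline{\Psi}$. This is standard from Kambites' theory of retractions; if needed, one composes the morphisms to get morphisms between the pruned trees and uses that an endomorphism of a pruned tree is an automorphism. I will build morphisms both ways between $\splay(\Gamma)$ and $\splay(\rho(\Gamma))$, mapping trunk identically and building each map path by path. This is valid because the paths in a bouquet share only the root.

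The direction $\splay(\rho(\Gamma)) \to \splay(\Gamma)$ is easy. Since $\rho(\Gamma)$ is a subtree of $\Gamma$ containing the trunk, a leaf $u$ of $\rho(\Gamma)$ in branch $i$ has $p_u$ an initial segment of $p_v$ for some leaf $v$ of $\Gamma$ in branch $i$. I send the copy of $p_u$ into the copy of $p_v$.

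The reverse direction is where I expect the main difficulty. For a leaf $v$ of $\Gamma$ in branch $i$, the image $\rho(p_v)$ is a path in $\rho(\Gamma)$ starting at the trunk vertex $t_i$ with the same label. However, it may run along trunk edges before entering some branch $j \geq i$ of $\rho(\Gamma)$. To handle this, I extend the portion of $\rho(p_v)$ inside branch $j$ to some leaf $w$ of $\rho(\Gamma)$. The corresponding chain in $\splay(\rho(\Gamma))$ is then a copy of $p_w$ hanging from $t_j$. The word labelling $\rho(p_v)$ is therefore readable in $\splay(\rho(\Gamma))$ from $t_i$: first along the trunk to $t_j$, then along that chain. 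I map the copy of $p_v$ accordingly.

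The resulting mutual morphisms give $\overline{\splay(\Gamma)} = \overline{\splay(\rho(\Gamma))}$.
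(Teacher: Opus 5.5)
Your proposal is correct. Parts (1)--(3) follow the paper's branch-decomposition bookkeeping. Your leaf bijection in (3) tracks the multiset of chain labels, so it is slightly more careful than the paper's comparison of which labels occur.

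For (4) you take a genuinely different route. The paper introduces the auxiliary $\fullsplay$, in which every vertex contributes a chain. It builds an explicit retraction of $\fullsplay(\Gamma)$ onto its subtree $\fullsplay(\rho(\Gamma))$. Full splaying is what lets each chain be sent onto a \emph{whole} chain ending at a leaf, so that the map fixes $\fullsplay(\rho(\Gamma))$ pointwise. The paper then combines Proposition~\ref{prop:retractconfluent} with the identity $\overline{\fullsplay(\Xi)}=\overline{\splay(\Xi)}$, which it asserts without proof.

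You stay with $\splay$ and only need morphisms in both directions. A chain may therefore land on a trunk segment followed by an initial segment of a chain, and no idempotence has to be checked. The price is the criterion that morphically equivalent trees have the same pruned retract. That criterion is sound as you sketch it: some power of an endomorphism of a finite pruned tree is a retraction, hence the identity. So the induced maps $\overline{\Phi}\to\overline{\Psi}$ and $\overline{\Psi}\to\overline{\Phi}$ have automorphisms as composites, are injective, and by counting are isomorphisms.

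A side benefit is that both of your morphisms are instances of one reusable fact: any morphism $\Gamma\to\Delta$ of left $X$-trees induces a morphism $\splay(\Gamma)\to\splay(\Delta)$.

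State one degenerate case explicitly. If $\rho(p_v)$ ends on the trunk at $t_j$, there is nothing to extend into branch $j$, and the chain is simply sent along the trunk from $t_i$ to $t_j$.
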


\begin{proof}
    Throughout, suppose $\Gamma$ has branch decomposition $B_0 \times x_1 \times B_1 \times \dots \times x_k \times B_k$.
    \begin{enumerate}
        \item Suppose $\Delta$ has branch decomposition $C_0 \times y_1 \times C_1 \times \dots \times y_l \times C_l$. It is clear that the branch decomposition of $\Gamma \times \Delta$ is exactly \[B_0 \times x_1 \times B_1 \times \dots \times x_k \times (B_k \times C_0) \times y_1 \times C_1 \times \dots \times y_l \times C_l.\]Note that \[\splay(B_k \times C_0) = \bigtimes_{\substack{v\text{ is a}\\\substack{\text{leaf of}\\B_k \times C_0}}} p_v = \bigtimes_{\substack{v\text{ is a}\\\substack{\text{leaf of}\\B_k}}} p_v \times \bigtimes_{\substack{v\text{ is a}\\\substack{\text{leaf of}\\C_0}}} p_v = \splay(B_k) \times \splay(C_0).\]Hence certainly \begin{multline*}
            \splay(\Gamma \times \Delta) = \splay(B_0) \times x_1 \times \splay(B_1) \times \dots \times x_k \times \splay(B_k) \times\\ \splay(C_0) \times y_1 \times \splay(C_1) \times \dots \times y_l \times \splay(C_l) = \splay(\Gamma) \times \splay(\Delta).
        \end{multline*}

        \item Certainly $\overline{\splay(\Gamma)}$ exists as some subtree of $\splay(\Gamma)$ containing the start and end vertices. In particular, the branch decomposition of $\overline{\splay(\Gamma)}$ is of the form \[D_0 \times x_1 \times D_1 \times \dots \times x_k \times D_k\]where each $D_i$ is a subtree of $\splay(B_i)$. Clearly then $\splay(D_i) = D_i$ for each $i$, and as such \begin{align*}
            \splay\left(\overline{\splay(\Gamma)}\right) &= \splay(D_0) \times x_1 \times \splay(D_1) \times \dots \times x_k \times \splay(D_k)\\
            &= D_0 \times x_1 \times D_1 \times \dots \times x_k \times D_k\\
            &= \overline{\splay(\Gamma)}.
        \end{align*}

        \item First note that $\splay(\Gamma^{(+)})$ and $\splay(\splay(\Gamma)^{(+)})$ are both trees in $\e(X)$ in the image of $\splay$. Hence they are entirely determined by the labels of paths from their start vertex which terminate at leaves. It is therefore sufficient to show that a path exists from the start vertex of $\splay(\Gamma^{(+)})$ to a leaf if and only if a path with the same label exists from the start vertex of $\splay(\splay(\Gamma)^{(+)})$ to a leaf.
        
        Suppose that a path $p$ exists in $\splay(\Gamma^{(+)})$ from the start vertex to some leaf. By definition of $\splay$, this path exists if and only if a path with the same label exists in $\Gamma^{(+)}$ from the start vertex to some leaf. By definition of ${}^{(+)}$, this path exists if and only if a corresponding path exists in $\Gamma$. In turn, this is equivalent to a corresponding path existing in $\splay(\Gamma)$, in $\splay(\Gamma)^{(+)}$ and in $\splay(\splay(\Gamma)^{(+)})$. Our result is proved. 

        \item For the proof of \eqref{lem:splay:splayrhogamma}, we introduce a new operation on left $X$-trees which we will not require elsewhere. We call this operation \textit{full splaying}. Given a tree $B \in \e(X)$, we define \[\fullsplay(B) := \bigtimes_{v \in V(B)} p_v.\]where $p_v$ is the unique directed path from the start vertex of $B$ to $v$, considered as a left $X$-tree with coinciding start and end vertex in the expected place. Note that $\splay(B)$ is a subtree of $\fullsplay(B)$ and moreover $\overline{\fullsplay(B)} = \overline{\splay(B)}$. As with our regular splaying operation, we extend $\fullsplay$ to all of $\lut(X)$ by defining\[\fullsplay(\Xi) := \fullsplay(E_0) \times z_1 \times \fullsplay(E_1) \times \dots \times z_m \times \fullsplay(E_m)\]for all $\Xi \in \lut(X)$ where $E_0 \times z_1 \times E_1 \times \dots \times z_m \times E_m$ is the branch decomposition of $\Xi$. As above, we observe that $\splay(\Xi)$ is a subtree of $\fullsplay(\Xi)$ and $\overline{\fullsplay(\Xi)} = \overline{\splay(\Xi)}$.

        Now consider the trees $\fullsplay(\Gamma)$ and $\fullsplay(\rho(\Gamma))$. Since $\rho(\Gamma)$ is a subtree of $\Gamma$, it follows that $\fullsplay(\rho(\Gamma))$ is a subtree of $\fullsplay(\Gamma)$. We aim to construct a retraction $\phi:\fullsplay(\Gamma) \to \fullsplay(\Gamma)$ with image $\fullsplay(\rho(\Gamma))$. Figure \ref{fig:rhodiagram} gives an example of a tree $\Gamma$, an image of $\Gamma$ under a retraction $\rho$, and the respective full splaying images.
        
        \begin{figure}[h!]
            \centering
            \begin{tikzpicture}

            \node (upleft) at (0,0) {
            \begin{tikzpicture}
                \setup

                \node (l) at (-1,2.7) {\Large$\Gamma$};
                \node (A) at (0,0) {\large$+$};
                \Vertex[x=0,y=1]{C1};
                \Vertex[x=0,y=2]{C2};
                \Vertex[x=1,y=1]{B1};
                \Vertex[x=2,y=1.3]{BA};
                \Vertex[x=2,y=0.7]{BB};
                \Vertex[x=1,y=2.3]{E1};
                \Vertex[x=2,y=2.6]{EA};
                \Vertex[x=2,y=2]{EB};
                \node (C3) at (0,3) {\large$\times$};
            
                \Edge(A)(C1)\draw (A) -- (C1) node[midway,left=2pt] {$x$};
                \Edge(C1)(C2)\draw (C1) -- (C2) node[midway,left=2pt] {$y$};
                \Edge(C2)(C3)\draw (C2) -- (C3) node[midway, left=2pt] {$z$};
                \Edge(C1)(B1)\draw (C1) -- (B1) node[midway,above=2pt] {$y$};
                \Edge(B1)(BA)\draw (B1) -- (BA) node[midway,above=2pt] {$x$};
                \Edge(B1)(BB)\draw (B1) -- (BB) node[midway,below=2pt] {$z$};
                \Edge(C2)(E1)\draw (C2) -- (E1) node[midway,above=2pt] {$x$};
                \Edge(E1)(EA)\draw (E1) -- (EA) node[midway,above=2pt] {$x$};
                \Edge(E1)(EB)\draw (E1) -- (EB) node[midway,below=2pt] {$x$};
            \end{tikzpicture}
            };
            
            \node (upright) at (7,0) {
            \begin{tikzpicture}
                \setup
            
                \node (l) at (-1,2.7) {\Large$\rho(\Gamma)$};
                \node (A) at (0,0) {\large$+$};
                \Vertex[x=0,y=1]{C1};
                \Vertex[x=0,y=2]{C2};
                \Vertex[x=1,y=2.3]{E1};
                \Vertex[x=2,y=2.3]{EA};

                \node (C3) at (0,3) {\large$\times$};
            
                \Edge(A)(C1)\draw (A) -- (C1) node[midway,left=2pt] {$x$};
                \Edge(C1)(C2)\draw (C1) -- (C2) node[midway,left=2pt] {$y$};
                \Edge(C2)(C3)\draw (C2) -- (C3) node[midway, left=2pt] {$z$};
                \Edge(C2)(E1)\draw (C2) -- (E1) node[midway,above=2pt] {$x$};
                \Edge(E1)(EA)\draw (E1) -- (EA) node[midway,above=2pt] {$x$};
            \end{tikzpicture}
            };

            \node (downleft) at (0,-5) {
            \begin{tikzpicture}
                \setup

                \node (l) at (-0.25,-0.5) {\large$\fullsplay(\Gamma)$};
                \node (A) at (0,0) {\large$+$};
                \Vertex[x=0,y=1]{C1};
                \Vertex[x=0,y=2]{C2};
                \Vertex[x=1,y=0.3]{B1};
                \Vertex[x=1,y=1.3]{B2};
                \Vertex[x=1.5,y=1]{B3};
                \Vertex[x=2,y=1.3]{BA};
                \Vertex[x=2,y=0.3]{BB};
                \Vertex[x=1,y=2.5]{E1};
                \Vertex[x=1,y=3]{E2};
                \Vertex[x=1,y=2]{E3};
                \Vertex[x=2,y=4]{EA};
                \Vertex[x=2,y=3]{EB};
                \node (C3) at (0,3) {\large$\times$};
            
                \Edge(A)(C1)\draw (A) -- (C1) node[midway,left=2pt] {$x$};
                \Edge(C1)(C2)\draw (C1) -- (C2) node[midway,left=2pt] {$y$};
                \Edge(C2)(C3)\draw (C2) -- (C3) node[midway, left=2pt] {$z$};
                \Edge(C1)(B1)\draw (C1) -- (B1) node[midway,below=2pt] {$y$};
                \Edge(C1)(B2)\draw (C1) -- (B2) node[midway,above=2pt] {$y$};
                \Edge(C1)(B3)\draw (C1) -- (B3) node[midway,below=1pt] {$y$};
                \Edge(B1)(BB)\draw (B1) -- (BB) node[midway,below=2pt] {$z$};
                \Edge(B2)(BA)\draw (B2) -- (BA) node[midway,above=2pt] {$x$};
                \Edge(C2)(E1)\draw (C2) -- (E1) node[midway,right=3pt] {$x$};
                \Edge(C2)(E2)\draw (C2) -- (E2) node[midway,above=2pt] {$x$};
                \Edge(C2)(E3)\draw (C2) -- (E3) node[midway,below=2pt] {$x$};
                \Edge(E2)(EA)\draw (E2) -- (EA) node[midway,above=2pt] {$x$};
                \Edge(E1)(EB)\draw (E1) -- (EB) node[midway,below=2pt] {$x$};
            \end{tikzpicture}
            };

            \node (downright) at (7,-5.4) {
            \begin{tikzpicture}
                \setup

                \node (l) at (-0.25,-0.5) {\large$\fullsplay(\rho(\Gamma))$};
                \node (A) at (0,0) {\large$+$};
                \Vertex[x=0,y=1]{C1};
                \Vertex[x=0,y=2]{C2};
                \Vertex[x=1,y=2.5]{E1};
                \Vertex[x=1,y=2]{E3};
                \Vertex[x=2,y=3]{EB};
                \node (C3) at (0,3) {\large$\times$};
            
                \Edge(A)(C1)\draw (A) -- (C1) node[midway,left=2pt] {$x$};
                \Edge(C1)(C2)\draw (C1) -- (C2) node[midway,left=2pt] {$y$};
                \Edge(C2)(C3)\draw (C2) -- (C3) node[midway, left=2pt] {$z$};
                \Edge(C2)(E1)\draw (C2) -- (E1) node[midway,right=3pt] {$x$};
                \Edge(C2)(E3)\draw (C2) -- (E3) node[midway,below=2pt] {$x$};
                \Edge(E1)(EB)\draw (E1) -- (EB) node[midway,below=2pt] {$x$};
            \end{tikzpicture}
            };
            
            \node[right=0.5cm of upleft] (larrow1) {};
            \node[right=3.5cm of upleft] (rarrow1) {};
            \draw[->,thick] (larrow1) -- node[above] {$\rho$} (rarrow1);

            \node[below=0cm of upleft] (larrow2) {};
            \node[below=1.1cm of upleft] (rarrow2) {};
            \draw[->,thick] (larrow2) -- node[left] {$\fullsplay$} (rarrow2);

            \node[below=0cm of upright] (larrow3) {};
            \node[below=1.1cm of upright] (rarrow3) {};
            \draw[->,thick] (larrow3) -- node[right] {$\fullsplay$} (rarrow3);

            \node[right=0.5cm of downleft] (larrow4) {};
            \node[right=3.5cm of downleft] (rarrow4) {};
            \draw[->,thick,dotted] (larrow4) -- node[above] {$\phi$} (rarrow4);
            
            \end{tikzpicture}
            \caption{Example of the maps in Lemma \ref{lem:splay}\eqref{lem:splay:splayrhogamma} for a left $\{x,y,z\}$-tree $\Gamma$.}
            \label{fig:rhodiagram}
        \end{figure}

        We first define the map $\phi$ on vertices of $\fullsplay(\Gamma)$. We define $\phi$ to fix each trunk vertex. Now consider a leaf vertex not on the trunk, say $v$. Then there is a unique path in $\fullsplay(\Gamma)$, say labelled $w$, from a trunk vertex (say the $i^\text{th}$ trunk vertex) to $v$. By definition, $v$ corresponds to some directed path $p_v$ in $\Gamma$ from the $i^\text{th}$ trunk vertex $t_v$ to some non-trunk vertex $u_v$ with label $w$. These vertices and path have images in $\rho(\Gamma)$, i.e. in $\rho(\Gamma)$ there is a directed path $\rho(p_v)$ from the $i^\text{th}$ trunk vertex $\rho(t_v)$ to some vertex $\rho(u_v)$. Now consider two cases of $\rho(u_v)$. \begin{description}
            \item[Case 1] If $\rho(u_v)$ is on the trunk of $\rho(\Gamma)$, say it is the $j^\text{th}$ trunk vertex, then $w$, the label of $\rho(p_v)$, must label of the trunk from the $i^\text{th}$ trunk vertex to the $j^\text{th}$ trunk vertex of $\Gamma$. Since the trunk is fixed under full splaying, this label persists from the $i^\text{th}$ to the $j^\text{th}$ trunk vertex of $\fullsplay(\Gamma)$. Thus define $\phi(v)$ to be the $j^\text{th}$ trunk vertex of $\fullsplay(\Gamma)$.
            \item[Case 2] If instead $\rho(u_v)$ is not on the trunk of $\rho(\Gamma)$, then it corresponds to some path in $\fullsplay(\rho(\Gamma))$ from a trunk vertex to a leaf. Suppose this trunk vertex is the $k^\text{th}$ trunk vertex, the leaf is $l$, and the path has label $b$. Then $w$, the label of $\rho(p_v)$, is of the form $ab$, where $a$ labels the trunk from the $i^\text{th}$ to the $k^\text{th}$ vertex. Considering $\fullsplay(\rho(\Gamma))$ as a subtree of $\fullsplay(\Gamma)$, define $\phi(v) = l$. Note the path from $v$ to $l$ in $\fullsplay(\Gamma)$ has label $ab = w$.
        \end{description}
        
        We now define $\phi$ on all other vertices. For any non-trunk, non-leaf vertex $u$ of $\fullsplay(\Gamma)$, which must lie on some single path $p_{v}$ from a trunk vertex $t$ to a leaf $v$, we define $\phi(u)$ to be the natural vertex on the path from $\phi(t)$ to $\phi(v)$. By choice of $\phi(v)$, this path has the same label as $p_{v}$ and so a natural choice for $\phi(u)$ exists.

        Since $\fullsplay(\Gamma)$ consists only of the trunk and non-splitting paths, the map $\phi$ on vertices induces a map on edges by defining the image of an edge from a vertex $p$ to a vertex $q$ to be the unique edge from $\phi(p)$ to $\phi(q)$, and with this extension, $\phi$ forms an endomorphism on $\fullsplay(\Gamma)$.

        Observe that for any vertex $v$ in the embedded copy of $\fullsplay(\rho(\Gamma)) \subseteq \fullsplay(\Gamma)$, the map $\phi$ must fix $v$ and consequently also any edges of $\fullsplay(\rho(\Gamma))$. Hence $\phi$ is a retraction of $\fullsplay(\Gamma)$ with image $\fullsplay(\rho(\Gamma))$. By Proposition \ref{prop:retractconfluent}, we deduce that $\overline{\fullsplay(\Gamma)} = \overline{\fullsplay(\rho(\Gamma))}$. Thus\[\overline{\splay(\Gamma)} = \overline{\fullsplay(\Gamma)} = \overline{\fullsplay(\rho(\Gamma))} = \overline{\splay(\rho(\Gamma))}.\qedhere\] 
    \end{enumerate}
\end{proof}

\subsection{The left \texorpdfstring{$h$}{h}-adequate monoid \texorpdfstring{$\h(X)$}{H(X)}} The splaying operation provides us with the tools required to describe our free objects. Given a left $X$-tree $\Gamma$, define $\widehat\Gamma$ to be \[\widehat{\Gamma} := \overline{\splay(\Gamma)}.\]

We define the set\[\h(X) = \left\{ \widehat{\Gamma} \mid \Gamma \in \lut(X) \right\}.\]Similar to retraction, we treat $\widehat{\cdot}$ as a map from $\lut(X)$ to $\h(X)$, and we imbue $\h(X)$ with a $(2,1,0)$-structure by defining operations\[\Gamma\Delta := \widehat{\Gamma \times \Delta},\quad\Gamma^+ := \widehat{\Gamma^{(+)}},\]and distinguishing the trivial tree. Note that the generating set $X$ of $\lut(X)$ is contained in $\h(X)$. We observe the following alternative form of the product in $\h(X)$, which simplifies much of the forthcoming.

\begin{lem}\label{lem:altproduct}
    Let $\Gamma,\Delta \in \h(X)$. Then $\Gamma\Delta = \overline{\Gamma \times \Delta}$.
\end{lem}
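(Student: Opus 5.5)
The plan is to show $\widehat{\Gamma \times \Delta} = \overline{\Gamma\times\Delta}$ whenever $\Gamma,\Delta \in \h(X)$. By definition $\widehat{\Gamma\times\Delta} = \overline{\splay(\Gamma\times\Delta)}$. By Lemma \ref{lem:splay}\eqref{lem:splay:splaymor} this equals $\overline{\splay(\Gamma)\times\splay(\Delta)}$. It therefore suffices to show that $\splay(\Gamma) = \Gamma$ for every $\Gamma \in \h(X)$. Once that holds, the right-hand side becomes $\overline{\Gamma\times\Delta}$ and we are done.

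To see that elements of $\h(X)$ are fixed by $\splay$, write $\Gamma = \widehat{\Xi} = \overline{\splay(\Xi)}$ for some $\Xi \in \lut(X)$. Lemma \ref{lem:splay}\eqref{lem:splay:splaybar} states exactly that $\splay(\overline{\splay(\Xi)}) = \overline{\splay(\Xi)}$, so $\splay(\Gamma) = \Gamma$.

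Combining the two steps gives
\[\Gamma\Delta = \widehat{\Gamma\times\Delta} = \overline{\splay(\Gamma\times\Delta)} = \overline{\splay(\Gamma)\times\splay(\Delta)} = \overline{\Gamma\times\Delta}.\]
There is essentially no obstacle here, since the substantive work is already contained in Lemma \ref{lem:splay}\eqref{lem:splay:splaymor} and \eqref{lem:splay:splaybar}. The only care needed is to check that the equalities are of trees up to isomorphism, which is the standing convention, so that the final retraction is applied to the same tree on both sides.
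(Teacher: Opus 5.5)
Your proposal is correct and follows the paper's proof essentially verbatim: you first show $\splay(\Gamma)=\Gamma$ for $\Gamma\in\h(X)$ via Lemma \ref{lem:splay}\eqref{lem:splay:splaybar}. You then apply Lemma \ref{lem:splay}\eqref{lem:splay:splaymor} to obtain $\overline{\splay(\Gamma\times\Delta)}=\overline{\Gamma\times\Delta}$.
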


\begin{proof}
    Observe that if $\Gamma \in \h(X)$, say $\Gamma = \widehat{\Xi}$ for some $\Xi \in \lut(X)$, then\begin{equation}\label{eq:splaygammaisgamma}
        \splay(\Gamma) = \splay\left(\overline{\splay(\Xi)}\right) = \overline{\splay(\Xi)} = \Gamma
    \end{equation}by Lemma \ref{lem:splay}\eqref{lem:splay:splaybar}. Thus by Lemma \ref{lem:splay}\eqref{lem:splay:splaymor}, we have\[\Gamma\Delta = \overline{\splay(\Gamma \times \Delta)} = \overline{\splay(\Gamma) \times \splay(\Delta)} = \overline{\Gamma \times \Delta}. \qedhere\]
\end{proof}

Observe that by Lemma \ref{lem:splay}\eqref{lem:splay:splayrhogamma}, we have that $\overline{\splay\left(\overline{\Gamma}\right)} = \overline{\splay(\Gamma)}$ for any $\Gamma \in \lut(X)$. Hence if $\Gamma,\Delta \in \lut(X)$ have $\overline{\Gamma} = \overline{\Delta}$, then $\widehat{\Gamma} = \widehat{\overline{\Gamma}} = \widehat{\overline{\Delta}} = \widehat{\Delta}$. Thus $\widehat{\cdot}$ restricts to a well defined map $\fleh(X) \to \h(X)$ by $\Gamma \mapsto \widehat{\Gamma_0}$ where $\Gamma_0$ is any left $X$-tree such that $\overline{\Gamma_0} = \Gamma$. In particular, one can take $\Gamma_0 = \Gamma$ itself, and then this map is exactly $\Gamma \mapsto \widehat{\Gamma}$ which we also denote $\widehat{\cdot}$. We now show the following key result, mirroring \cite[Theorem 4.5]{kambites:free}.

\begin{thm}\label{thm:hatmor}
    The map $\widehat{\cdot} : \fleh(X) \to \h(X)$ is a surjective $(2,1,0)$-morphism fixing $X$.
\end{thm}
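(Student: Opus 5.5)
The proof splits into four parts: surjectivity, fixing $X$, preservation of the identity, and the two homomorphism identities. Throughout I will use the observation preceding the theorem, that $\widehat{\overline{\Gamma}} = \widehat{\Gamma}$ for every $\Gamma \in \lut(X)$, which follows from Lemma \ref{lem:splay}\eqref{lem:splay:splayrhogamma}.

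\textbf{Surjectivity, $X$, and the identity.} For surjectivity, take any element of $\h(X)$; it has the form $\widehat{\Gamma}$ with $\Gamma \in \lut(X)$. Then $\overline{\Gamma} \in \fleh(X)$ and $\widehat{\overline{\Gamma}} = \widehat{\Gamma}$, so it lies in the image. Each single-edge tree $x$ and the trivial tree are pruned and have trivial branches, so both $\splay$ and retraction fix them. Hence the map fixes $X$ and sends the identity to the identity.

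\textbf{Multiplication.} Let $\Gamma, \Delta \in \fleh(X)$, so their product there is $\overline{\Gamma \times \Delta}$. The chain I will verify is
\[\widehat{\overline{\Gamma\times\Delta}} = \widehat{\Gamma\times\Delta} = \overline{\splay(\Gamma)\times\splay(\Delta)} = \overline{\overline{\splay(\Gamma)}\times\overline{\splay(\Delta)}} = \overline{\widehat\Gamma\times\widehat\Delta} = \widehat\Gamma\,\widehat\Delta.\]
The justifications are:
\begin{itemize}
\item first equality: the observation above;
\item second: Lemma \ref{lem:splay}\eqref{lem:splay:splaymor};
\item third: Lemma \ref{lem:barismorphism}\eqref{lem:barismorphism:bartimesbar};
\item last: Lemma \ref{lem:altproduct}.
\end{itemize}

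\textbf{The unary operation.} In $\fleh(X)$ we have $\Gamma^+ = \overline{\Gamma^{(+)}}$, so its image is $\widehat{\overline{\Gamma^{(+)}}} = \widehat{\Gamma^{(+)}} = \overline{\splay(\Gamma^{(+)})}$. The goal is to show this equals $(\widehat\Gamma)^+ = \overline{\splay(\widehat\Gamma^{(+)})}$.

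By Lemma \ref{lem:splay}\eqref{lem:splay:splaygamma+}, $\splay(\Gamma^{(+)}) = \splay(\splay(\Gamma)^{(+)})$. It therefore remains to show that
\[\overline{\splay(\Xi^{(+)})} = \overline{\splay(\overline{\Xi}^{(+)})}\quad\text{where } \Xi = \splay(\Gamma).\]
A retraction $\rho$ of $\Xi$ onto $\overline{\Xi}$ is still an endomorphism of the underlying tree after the end vertex is moved to the start. It fixes the start vertex, so it is a retraction of $\Xi^{(+)}$ with image $\overline{\Xi}^{(+)}$. Lemma \ref{lem:splay}\eqref{lem:splay:splayrhogamma}, applied to $\Xi^{(+)}$ and this $\rho$, then gives the required equality.

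\textbf{Main obstacle.} The delicate step is this last one: it requires confirming that retractions are compatible with ${}^{(+)}$. This should be immediate from the definition, since morphisms need only respect start and end vertices, and after applying ${}^{(+)}$ both are the start vertex, which any retraction fixes.
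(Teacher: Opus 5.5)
Your proposal is correct and follows essentially the same argument as the paper: the same chain of Lemma \ref{lem:altproduct}, Lemma \ref{lem:barismorphism}\eqref{lem:barismorphism:bartimesbar}, Lemma \ref{lem:splay}\eqref{lem:splay:splaymor} and Lemma \ref{lem:splay}\eqref{lem:splay:splayrhogamma} handles the product. For the unary operation, you use the same transport of a retraction of $\splay(\Gamma)$ to $\splay(\Gamma)^{(+)}$, together with Lemma \ref{lem:splay}\eqref{lem:splay:splaygamma+}. The only harmless difference is in surjectivity: you take $\overline{\Gamma}$ as a preimage of $\widehat{\Gamma}$, whereas the paper shows $\widehat{\widehat{\Gamma}} = \widehat{\Gamma}$ as in \eqref{eq:hathatishat}, a fact it later reuses in Proposition \ref{prop:hisRIH} to treat $\widehat{\cdot}$ as a retract.
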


\begin{proof}
    Certainly $\widehat{\cdot}$ fixes $X$. We need to verify surjectivity and that \[\widehat\Gamma \widehat\Delta = \widehat{\overline{\Gamma\times\Delta}}\text{ ,}\quad \left(\widehat{\Gamma}\right)^+ = \widehat{\overline{\Gamma^{(+)}}}\mkern18mu\text{and}\mkern18mu \widehat{\varepsilon} = \varepsilon\]for all $\Gamma,\Delta \in \lut(X)$ and for the trivial tree $\varepsilon$. One establishes the final claim quickly, since $\overline{\splay(\varepsilon)} = \overline{\varepsilon} = \varepsilon$ indeed.

    For surjectivity, suppose $\widehat{\Gamma} \in \h(X)$ for some $\Gamma \in \lut(X)$. Clearly $\widehat{\Gamma}$ is a pruned left $X$-tree, with the property that\begin{equation}\label{eq:hathatishat}
        \widehat{\widehat{\Gamma}} = \overline{\splay\left(\widehat{\Gamma}\right)} = \overline{\widehat{\Gamma}} = \overline{\overline{\splay(\Gamma)}} = \overline{\splay(\Gamma)} = \widehat{\Gamma}
    \end{equation}where the second equality follows from \eqref{eq:splaygammaisgamma} and the fourth equality from Lemma \ref{lem:barismorphism}\eqref{lem:barismorphism:barbar}. Hence $\widehat{\Gamma}$ is its own image under $\widehat{\cdot}$ and the map is indeed surjective.

    We now show that $\widehat\Gamma \widehat\Delta = \widehat{\overline{\Gamma\times\Delta}}$. Via our previous lemmas,\begin{align*}
        \widehat{\Gamma}\widehat{\Delta} &= \overline{\widehat{\Gamma} \times \widehat{\Delta}} && \text{(by Lemma \ref{lem:altproduct})}\\
        &= \overline{\overline{\splay(\Gamma)} \times \overline{\splay(\Delta)}} && \\
        &= \overline{\splay(\Gamma) \times \splay(\Delta)} && \text{(by Lemma \ref{lem:barismorphism}\eqref{lem:barismorphism:bartimesbar})}\\
        &= \overline{\splay(\Gamma \times \Delta)} && \text{(by Lemma \ref{lem:splay}\eqref{lem:splay:splaymor})}\\
        &= \overline{\splay\left(\,\overline{\Gamma \times \Delta}\,\right)} && \text{(by Lemma \ref{lem:splay}\eqref{lem:splay:splayrhogamma})}\\
        &= \widehat{\overline{\Gamma \times \Delta}}.
    \end{align*}It remains to show that $\left(\widehat{\Gamma}\right)^+ = \widehat{\overline{\Gamma^{(+)}}}$. Note that\[\left(\widehat{\Gamma}\right)^+ = \widehat{\widehat{\Gamma}^{(+)}} = \overline{\splay\left( \widehat{\Gamma}^{(+)} \right)} = \overline{\splay\left( \overline{\splay(\Gamma)}^{(+)}\right)}.\]Consider a retraction $\rho$ on $\splay(\Gamma)$ with image $\overline{\splay(\Gamma)}$. Since the underlying graph of $\splay(\Gamma)^{(+)}$ is the same as $\splay(\Gamma)$, only with relocated end vertex, the map $\rho$ admits a natural retraction on $\splay(\Gamma)^{(+)}$, with the image of this retraction being $\overline{\splay(\Gamma)}^{(+)}$. Thus by Lemma \ref{lem:splay}\eqref{lem:splay:splayrhogamma}, we have $\overline{\splay\left( \overline{\splay(\Gamma)}^{(+)}\right)} = \overline{\splay\left( \splay(\Gamma)^{(+)}\right)}$. Furthermore, by Lemma \ref{lem:splay}\eqref{lem:splay:splaygamma+} and Lemma \ref{lem:splay}\eqref{lem:splay:splayrhogamma}, we see that $\overline{\splay\left( \splay(\Gamma)^{(+)}\right)} = \overline{\splay\left(\Gamma^{(+)} \right)} = \overline{\splay\left(\,\overline{\Gamma^{(+)}}\,\right)}$. Overall,\[\left(\widehat{\Gamma}\right)^+ = \overline{\splay\left( \overline{\splay(\Gamma)}^{(+)}\right)} = \overline{\splay\left(\,\overline{\Gamma^{(+)}}\,\right)} = \widehat{\overline{\Gamma^{(+)}}}.\qedhere\]    
\end{proof}

We quickly conclude the following.

\begin{cor}\label{cor:hisleftE}
    The $(2,1,0)$-algebra $\h(X)$ is a left Ehresmann monoid generated by $X$.
\end{cor}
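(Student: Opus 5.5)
The plan is to transport the left Ehresmann structure of $\fleh(X)$ to $\h(X)$ along the surjective morphism of Theorem \ref{thm:hatmor}, using the general universal-algebra fact that the class of left Ehresmann monoids is a variety of $(2,1,0)$-algebras. Varieties are closed under homomorphic images.

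Concretely, first recall from Theorem \ref{thm:flem} that $\fleh(X) = \operatorname{T}(X)$ is a left Ehresmann monoid. Its identities are $x^+x = x$, $(x^+y^+)^+ = x^+y^+ = y^+x^+$, $(xy)^+ = (xy^+)^+$, associativity, and the monoid identity axioms for the constant. Each of these is an identity in the signature $(2,1,0)$, and each transfers along a surjective $(2,1,0)$-morphism.

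To do this directly, take arbitrary elements of $\h(X)$. By surjectivity of $\widehat{\cdot}$ (Theorem \ref{thm:hatmor}) we may write them as $\widehat{\Gamma},\widehat{\Delta},\widehat{\Xi}$ with $\Gamma,\Delta,\Xi \in \fleh(X)$. For instance,
\[
\widehat{\Gamma}^+\widehat{\Gamma} = \widehat{\Gamma^+\Gamma} = \widehat{\Gamma}
\]
because $\widehat{\cdot}$ respects product and ${}^+$. Associativity, commutativity of projections, and $(xy)^+ = (xy^+)^+$ are handled in exactly the same way. Since $\widehat{\cdot}$ also preserves the constant, we get $\widehat{\varepsilon}=\varepsilon$, and so $\varepsilon\widehat{\Gamma} = \widehat{\varepsilon\Gamma} = \widehat{\Gamma}$ (and symmetrically on the right). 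Hence $\h(X)$ is a left Ehresmann monoid.

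For generation, $\fleh(X)$ is generated by $X$ as a $(2,1,0)$-algebra (Theorem \ref{thm:flem}), and $\widehat{\cdot}$ fixes $X$. The image of a generating set under a surjective morphism generates the image, so $\h(X)$ is generated by $\widehat{X} = X$.

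There is no real obstacle here. The only point needing care is that the operations on $\h(X)$ are exactly the ones that Theorem \ref{thm:hatmor} shows $\widehat{\cdot}$ respects, which is precisely what that theorem provides.
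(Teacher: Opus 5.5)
Your proposal is correct and follows the paper's own argument. The paper simply cites Theorem \ref{thm:hatmor} together with Birkhoff's theorem (a variety is closed under homomorphic images), which is exactly the transfer of identities along the surjective $(2,1,0)$-morphism $\widehat{\cdot}$ that you carry out by hand, and your generation argument (via $\widehat{\cdot}$ fixing $X$ and Theorem \ref{thm:flem}) fills in the one detail the paper leaves implicit.
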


\begin{proof}
    Follows from Theorem \ref{thm:hatmor} and Birkhoff's Theorem \cite[Theorem 10]{birkhoff:struct}.
\end{proof}

Left adequate monoids only form a quasi-variety and are not closed under taking $(2,1,0)$-quotients. As such, we cannot use Birkhoff's Theorem to deduce that $\h(X)$ is left adequate. Instead, we need to show this directly.

\begin{thm}
    The $(2,1,0)$-algebra $\h(X)$ is a left adequate monoid generated by $X$.
\end{thm}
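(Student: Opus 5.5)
The plan is to reduce both left adequacy quasi-identities to the corresponding facts in $\operatorname{T}(X)$, using Theorem \ref{thm:flem} together with the fact that products in $\h(X)$ are computed by plain retraction (Lemma \ref{lem:altproduct}). By Corollary \ref{cor:hisleftE}, $\h(X)$ is already a left Ehresmann monoid generated by $X$. So it remains to verify the two quasi-identities $x^2=x\rightarrow x=x^+$ and $xz=yz\rightarrow xz^+=yz^+$.

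Throughout I will use two facts. First, every $\Gamma\in\h(X)$ is a pruned tree with $\splay(\Gamma)=\Gamma$, by \eqref{eq:splaygammaisgamma}, so that $\widehat{\Gamma}=\Gamma$ by \eqref{eq:hathatishat}. Second, for $\Gamma,\Delta\in\h(X)$ the product $\Gamma\Delta=\overline{\Gamma\times\Delta}$ coincides with their product in $\operatorname{T}(X)$.

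\textbf{Idempotents.} Suppose $\Gamma\in\h(X)$ satisfies $\Gamma\Gamma=\Gamma$ in $\h(X)$. Then $\overline{\Gamma\times\Gamma}=\Gamma$, so $\Gamma$ is an idempotent of $\operatorname{T}(X)$. By Theorem \ref{thm:flem}, $\Gamma$ therefore has coinciding start and end vertex, i.e. $\Gamma^{(+)}=\Gamma$. Consequently $\Gamma^+=\widehat{\Gamma^{(+)}}=\widehat{\Gamma}=\Gamma$, and $\Gamma$ is a projection of $\h(X)$.

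\textbf{Right cancellation.} Let $\Gamma,\Delta,\Xi\in\h(X)$ with $\Gamma\Xi=\Delta\Xi$. By Lemma \ref{lem:altproduct} this says $\overline{\Gamma\times\Xi}=\overline{\Delta\times\Xi}$. Lemma \ref{lem:barismorphism}\eqref{lem:barismorphism:adequatequasi} then gives $\overline{\Gamma\times\Xi^{(+)}}=\overline{\Delta\times\Xi^{(+)}}$. It remains to identify $\Gamma\Xi^+$ in $\h(X)$ as a function of the tree $\overline{\Gamma\times\Xi^{(+)}}$ alone. I will compute
\begin{align*}
\Gamma\Xi^+ &= \overline{\Gamma\times\overline{\splay(\Xi^{(+)})}} = \overline{\Gamma\times\splay(\Xi^{(+)})} = \overline{\splay(\Gamma)\times\splay(\Xi^{(+)})}\\
&= \overline{\splay(\Gamma\times\Xi^{(+)})} = \overline{\splay\bigl(\overline{\Gamma\times\Xi^{(+)}}\bigr)}.
\end{align*}
These steps use, in order: Lemma \ref{lem:altproduct}; Lemma \ref{lem:barismorphism}\eqref{lem:barismorphism:bartimesbar} with $\overline{\Gamma}=\Gamma$; $\splay(\Gamma)=\Gamma$; Lemma \ref{lem:splay}\eqref{lem:splay:splaymor}; and finally Lemma \ref{lem:splay}\eqref{lem:splay:splayrhogamma}, applied to a retraction of $\Gamma\times\Xi^{(+)}$ onto its pruned image. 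The same computation for $\Delta$ yields $\Delta\Xi^+=\overline{\splay(\overline{\Delta\times\Xi^{(+)}})}$, and the two right-hand sides agree. This gives $\Gamma\Xi^+=\Delta\Xi^+$.

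I do not expect a serious obstacle. The one point needing care is the bookkeeping between products in $\operatorname{T}(X)$, products in $\h(X)$ and raw gluing in $\lut(X)$. In particular, the step where the retraction of the right factor, $\overline{\splay(\Xi^{(+)})}$, is absorbed must go through Lemma \ref{lem:barismorphism}\eqref{lem:barismorphism:bartimesbar}, which is valid only because $\Gamma$ is itself pruned.
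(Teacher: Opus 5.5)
Your proof is correct. For the second quasi-identity it takes a genuinely shorter route than the paper.

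For idempotents, the paper argues directly that a non-trivial trunk cannot survive $\overline{\Gamma\times\Gamma}=\Gamma$. That is the same content as your appeal to the description of idempotents of $\operatorname{T}(X)$ in Theorem \ref{thm:flem}.

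For right cancellation, both arguments pass through Lemma \ref{lem:barismorphism}\eqref{lem:barismorphism:adequatequasi}. Both also finish by identifying $\Gamma\Xi^+$ with $\overline{\Gamma\times\splay(\Xi^{(+)})}$. The difference is the middle step:
\begin{itemize}
\item The paper proves the implication $\overline{\Gamma\times\Xi^{(+)}}=\overline{\Delta\times\Xi^{(+)}} \Rightarrow \overline{\Gamma\times\splay(\Xi^{(+)})}=\overline{\Delta\times\splay(\Xi^{(+)})}$ by a hands-on case analysis of root-to-leaf paths in pruned trees.
\item You use $\splay(\Gamma)=\Gamma$ together with Lemma \ref{lem:splay}\eqref{lem:splay:splaymor} and \eqref{lem:splay:splayrhogamma} to get $\overline{\Gamma\times\splay(\Xi^{(+)})}=\overline{\splay(\Gamma\times\Xi^{(+)})}=\overline{\splay(\overline{\Gamma\times\Xi^{(+)}})}$. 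The right-hand side depends only on $\overline{\Gamma\times\Xi^{(+)}}$, so the implication is immediate.
\end{itemize}

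Conceptually, your argument amounts to this: $\h(X)$ is a multiplicative submonoid of $\operatorname{T}(X)$, and $\widehat{\cdot}$ is a $(2,1,0)$-morphism onto it fixing it pointwise (Theorem \ref{thm:hatmor}). Both left adequacy quasi-identities have hypotheses involving only multiplication, so they are simply inherited from $\operatorname{T}(X)$. This is shorter, sidesteps the delicate pruning case analysis, and costs nothing extra, since Lemma \ref{lem:splay}\eqref{lem:splay:splayrhogamma} is already needed for Theorem \ref{thm:hatmor}.

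One small correction to your closing remark. Absorbing the bar on the right factor via Lemma \ref{lem:barismorphism}\eqref{lem:barismorphism:bartimesbar} does not require $\Gamma$ to be pruned, since $\overline{A\times\overline{B}}=\overline{\overline{A}\times\overline{B}}=\overline{A\times B}$ for all $A,B\in\lut(X)$. Membership $\Gamma\in\h(X)$ is genuinely used only in $\splay(\Gamma)=\Gamma$ and in Lemma \ref{lem:altproduct}.
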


\begin{proof}
    Following Corollary \ref{cor:hisleftE}, it remains to demonstrate the two remaining quasi-identities for left adequacy. For any tree $\Gamma \in \h(X)$, let $\Theta(\Gamma)$ be the subtree consisting exactly of the trunk of $\Gamma$. If $\Gamma^2 = \Gamma$, then $\overline{\Gamma \times \Gamma} = \Gamma$ by Lemma \ref{lem:altproduct}. Thus certainly $\Theta(\Gamma) \times \Theta(\Gamma) = \Theta(\Gamma)$, and so we must have that $\Theta(\Gamma)$ is the trivial tree. Hence $\Gamma$ is such that $\Gamma = \Gamma^{(+)}$, and it follows that $\Gamma = \Gamma^+$.

    Now let $\Delta,\Xi \in \h(X)$ also. Suppose $\Gamma\Xi = \Delta\Xi$, that is $\overline{\Gamma \times \Xi} = \overline{\Delta \times \Xi}$ by Lemma \ref{lem:altproduct}. Thus\begin{equation}
        \Gamma\Xi = \Delta\Xi \iff \overline{\Gamma \times \Xi} = \overline{\Delta \times \Xi} \iff \overline{\Gamma \times \Xi^{(+)}} = \overline{\Delta \times \Xi^{(+)}} \label{eq:GammaXi+}
    \end{equation}where the final equivalence is by Lemma \ref{lem:barismorphism}\eqref{lem:barismorphism:adequatequasi}. We now claim that \[\overline{\Gamma \times \Xi^{(+)}} = \overline{\Delta \times \Xi^{(+)}} \implies \overline{\Gamma \times \splay(\Xi^{(+)})} = \overline{\Delta \times \splay(\Xi^{(+)})}.\]
    
    Assume then that $\overline{\Gamma \times \Xi^{(+)}} = \overline{\Delta \times \Xi^{(+)}}$. In particular, $\Gamma$ and $\Delta$ must have identical trunks; suppose they both have $k$ trunk vertices and label them $1,\dots,k$. Since both $\overline{\Gamma \times \splay(\Xi^{(+)})}$ and $\overline{\Delta \times \splay(\Xi^{(+)})}$ are fixed by $\splay$, in order to show they are equal it is sufficient to show that a path exists in $\overline{\Gamma \times \splay(\Xi^{(+)})}$ from a trunk vertex to a leaf if and only if a path with the same label exists in $\overline{\Delta \times \splay(\Xi^{(+)})}$ from the corresponding trunk vertex to a leaf.

    Suppose that a path labelled $w$ exists in $\overline{\Gamma \times \splay(\Xi^{(+)})}$ from the $i^\text{th}$ trunk vertex to a leaf. We split into two cases.
    \begin{description}
        \item[Case 1] If $i \lneq k$, the path labelled $w$ exists in the $\Gamma$ portion of $\overline{\Gamma \times \splay(\Xi^{(+)})}$. Since this tree and $\Gamma$ are pruned, the label of $w$ must not be of the form $tp$ for $t$ the label of the remaining trunk of $\Gamma$ from the $i^\text{th}$ trunk vertex to the end vertex, and for any $p$ which labels a path in $\splay(\Xi^{(+)})$ from the start vertex. Since the labels of paths readable (in this sense) in $\splay(\Xi^{(+)})$ are exactly the labels of paths readable in $\Xi^{(+)}$, the path labelled $w$ (which exists in $\Gamma$) is fixed by every retraction on $\Gamma \times \Xi^{(+)}$ (as otherwise it would map to a path which we concluded above cannot exist). Since $\overline{\Gamma \times \Xi^{(+)}} = \overline{\Delta \times \Xi^{(+)}}$, a corresponding path labelled $w$ exists in $\overline{\Delta \times \Xi^{(+)}}$ from the $i^\text{th}$ trunk vertex of $\Delta$ to a leaf. Such a path thus exists in $\Delta$, and so too in $\Delta \times \splay(\Xi^{(+)})$. Moreover it must exist in $\overline{\Delta \times \splay(\Xi^{(+)})}$, as otherwise an illegal path as described above would exist.

        \item[Case 2] Suppose $i = k$. The path labelled $w$ either exists in $\Gamma$ with initial vertex the end vertex of $\Gamma$, or it must exist in $\splay(\Xi^{(+)})$ from the start vertex (or potentially both). If it exists in $\Gamma$ and not in $\splay(\Xi^{(+)})$, then an identical argument to Case 1 gives our claim. Alternatively, if it exists in $\splay(\Xi^{(+)})$, then it certainly exists in $\Delta \times \splay(\Xi^{(+)})$ with initial vertex the end vertex. We claim it exists in $\overline{\Delta \times \splay(\Xi^{(+)})}$ and terminates at a leaf. Suppose it does not: then some retraction on $\Delta \times \splay(\Xi^{(+)})$ must ``erase'' it, i.e. map the path to some path with label $wq$ in $\overline{\Delta \times \splay(\Xi^{(+)})}$ from the end vertex to a leaf.
        
        If the path labelled $wq$ exists in $\splay(\Xi^{(+)})$, then it certainly exists in $\Gamma \times \splay(\Xi^{(+)})$. But then there is some retraction mapping our path $w$ in $\overline{\Gamma \times \splay(\Xi^{(+)})}$ onto this path, so as to not contradict being pruned, we must have that $q$ is the empty word and there is a path labelled $w$ in $\overline{\Delta \times \splay(\Xi^{(+)})}$ with terminal vertex a leaf as required.
        
        Instead, if the path labelled $wq$ exists in $\Delta$, then some path labelled $wq$ is readable in $\overline{\Delta \times \Xi^{(+)}} = \overline{\Gamma \times \Xi^{(+)}}$ with initial vertex the end vertex. Then it is either readable in $\Gamma$ or in $\Xi^{(+)}$ (equivalently in $\splay(\Xi^{(+)})$). We have already dealt with the latter case above, and if it exists in $\Gamma$ then it exists in $\Gamma \times \splay(\Xi^{(+)})$ and hence in $\overline{\Gamma \times \splay(\Xi^{(+)})}$. But again the original path labelled $w$ also exists here, and so as to not contradict being pruned, we must have that $q$ is the empty word and thus there is a path labelled $w$ with terminal vertex a leaf in $\overline{\Delta \times \splay(\Xi^{(+)})}$.
    \end{description}

    A symmetric argument further gives that $\overline{\Gamma \times \splay(\Xi^{(+)})}$ and $\overline{\Delta \times \splay(\Xi^{(+)})}$ have exactly the same branches and hence \[\overline{\Gamma \times \splay(\Xi^{(+)})} = \overline{\Delta \times \splay(\Xi^{(+)})}.\]This completes the proof of our claim.
    
    Hence by \eqref{eq:GammaXi+}, we have that $\Gamma\Xi = \Delta\Xi$ implies $\overline{\Gamma \times \splay(\Xi^{(+)})} = \overline{\Delta \times \splay(\Xi^{(+)})}$. By Lemma \ref{lem:barismorphism}\eqref{lem:barismorphism:bartimesbar} and Lemma \ref{lem:altproduct}, observe that \[\overline{\Gamma \times \splay(\Xi^{(+)})} = \overline{\Gamma \times \overline{\splay(\Xi^{(+)})}} = \overline{\Gamma \times \Xi^+} = \Gamma\Xi^+.\] Similarly $\overline{\Delta \times \splay(\Xi^{(+)})} = \Delta\Xi^+$ and thus \[\Gamma\Xi^+ = \overline{\Gamma \times \splay(\Xi^{(+)})} = \overline{\Delta \times \splay(\Xi^{(+)})} = \Delta\Xi^+. \qedhere\]

\end{proof}

\begin{thm}
    The $(2,1,0)$-algebra $\h(X)$ satisfies the left $h$ identity.
\end{thm}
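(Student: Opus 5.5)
We must show $(\Gamma\Xi^+\Delta)^+ = (\Gamma\Delta)^+(\Gamma\Xi)^+$ for all $\Gamma,\Delta,\Xi \in \h(X)$. The plan is to lift everything to $\lut(X)$ and compare the trees before retraction. By Theorem \ref{thm:hatmor}, $\widehat{\cdot}$ is a $(2,1,0)$-morphism from $\fleh(X)$, and by Proposition \ref{prop:lutgenerated} together with Lemma \ref{lem:barismorphism}, both sides can be computed by applying $\widehat{\cdot}$ to the corresponding $\lut(X)$-expressions. Concretely, the left side equals $\overline{\splay\bigl((\Gamma\times\Xi^{(+)}\times\Delta)^{(+)}\bigr)}$. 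The right side equals $\overline{\splay\bigl((\Gamma\times\Delta)^{(+)}\times(\Gamma\times\Xi)^{(+)}\bigr)}$, using Lemma \ref{lem:splay}\eqref{lem:splay:splaymor} to combine the splays of the two factors. It therefore suffices to show that these two splayed trees in $\e(X)$ have the same retract, and in fact I expect them to be literally equal.

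The key observation is that a tree of the form $\splay(B)$ with $B \in \e(X)$ is determined by the multiset of labels of root-to-leaf paths of $B$. This is exactly the principle used in the proof of Lemma \ref{lem:splay}\eqref{lem:splay:splaygamma+}. So I will enumerate the leaves on each side. Write $t_\Gamma$ for the trunk label of $\Gamma$.

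In $(\Gamma\times\Xi^{(+)}\times\Delta)^{(+)}$ the leaves are of three kinds:
\begin{enumerate}
\item leaves of $\Gamma$ off its end vertex, with their labels from the root;
\item leaves of $\Xi$, with labels $t_\Gamma u$ for each root-to-leaf label $u$ of $\Xi$;
\item leaves of $\Delta$, with labels $t_\Gamma u$ for each root-to-leaf label $u$ of $\Delta$.
\end{enumerate}
The end vertex of $\Gamma$ is itself a leaf only when nothing hangs below it, and similar degenerate cases (for instance $\Xi$ or $\Delta$ trivial, so that the gluing vertex is a leaf) need care.

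In $(\Gamma\times\Delta)^{(+)}\times(\Gamma\times\Xi)^{(+)}$ the same three families of leaves appear, except that the $\Gamma$-leaves occur twice. Duplicated root paths in a splayed tree retract onto one another, so after applying $\overline{\cdot}$ they give the same pruned tree. Thus the plan is: first reduce each side to its splay; then list the leaf-path labels; then exhibit an explicit retraction that folds the duplicate copy of the $\Gamma$-paths onto the other copy; and finally apply Proposition \ref{prop:retractconfluent}.

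The main obstacle is the bookkeeping of the degenerate leaf cases. Suppose the end vertex of $\Gamma$ is a leaf in $\Gamma\times\Delta$ (because $\Delta$ is trivial) but not in $\Gamma\times\Xi^{(+)}\times\Delta$ (because $\Xi$ is nontrivial). Then the right side contains a root path labelled $t_\Gamma$ ending at a leaf that the left side lacks. To handle this, I would argue that such a path is a prefix of another root path on the same side, labelled $t_\Gamma u$, so it retracts into that path. Hence the two sides differ only by paths that are removed under $\overline{\cdot}$. Equivalently, I could pass to $\fullsplay$ as in Lemma \ref{lem:splay}\eqref{lem:splay:splayrhogamma}: there every prefix path is present on both sides, the two fully splayed trees have the same set of path labels up to multiplicity, and mutual retractions between them then give $\overline{\fullsplay} $ equal on both sides, hence $\widehat{\cdot}$ equal.
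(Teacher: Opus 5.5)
Your proposal is correct and follows essentially the paper's route: both sides are identified with pruned splayings of $(\Gamma\times\Xi^{(+)}\times\Delta)^{(+)}$ and $(\Gamma\times\Delta)^{(+)}\times(\Gamma\times\Xi)^{(+)}$, and their root-to-leaf path labels are compared via the three families coming from $\Gamma$, $\Xi$ and $\Delta$. Your explicit folding retractions for duplicated and prefix-dominated paths (or the $\fullsplay$ variant) are more careful than the paper, whose ``if and only if'' passes leaf paths through $\overline{\,\cdot\,}$ without comment. Drop the remark that the two splayed trees should be literally equal, since they are not.
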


\begin{proof}
    Let $\Gamma,\Delta,\Xi \in \h(X)$ and consider the elements $(\Gamma\Xi^+\Delta)^+$ and $(\Gamma\Delta)^+(\Gamma\Xi)^+ = (\Gamma\Delta)^+(\Gamma\Xi^+)^+$. Both of these elements are fixed by $\splay$ by \eqref{eq:splaygammaisgamma} and have coinciding start and end vertices. Thus they consist only of non-splitting directed paths from their start vertices to leaves. We show that such a path exists in $(\Gamma\Xi^+\Delta)^+$ if and only if a path with the same label exists in $(\Gamma\Delta)^+(\Gamma\Xi^+)^+$ from the start vertex to a leaf. Since both trees are determined by such paths, this is sufficient to show equality.

    Suppose a path labelled $w$ exists in $(\Gamma\Xi^+\Delta)^+ = \overline{\splay\left((\Gamma\Xi^+\Delta)^{(+)}\right)}$ from the start vertex to a leaf. For ease of notation, we will call a path \textit{suitable} in a given tree if it is from the start vertex to a leaf and is labelled $w$. Now a suitable path exists in $\overline{\splay\left((\Gamma\Xi^+\Delta)^{(+)}\right)}$ if and only if a suitable path exists in $\splay\left((\Gamma\Xi^+\Delta)^{(+)}\right)$. By definition of $\splay$, this path exists if and only if a suitable path exists in $(\Gamma\Xi^+\Delta)^{(+)}$. Call this path $p$. By Lemma \ref{lem:altproduct} and using that retraction is a $(2,1,0)$-morphism from $\lut(X) \to \fleh(X)$ (see \cite[Theorem 4.5]{kambites:free}), this tree is exactly $\overline{\Gamma \times \Xi^+ \times \Delta}^{(+)}$. Note that $p$ exists if and only if a suitable path exists in $\overline{\Gamma \times \Xi^+ \times \Delta}$, and in turn in $\Gamma \times \Xi^+ \times \Delta$. There are three cases for such a path: either it exists wholly in $\Gamma$, follows the trunk of $\Gamma$ and then ends at a leaf in $\Xi^+$, or follows the trunk of $\Gamma$ and ends at a leaf in $\Delta$. In all three cases, a suitable path exists in $(\Gamma \times \Delta)^{(+)} \times (\Gamma \times \Xi^+)^{(+)}$, and in fact all suitable paths are a result of one of these three cases. By the reverse argument to the above, this is equivalent to a suitable path existing in \begin{equation}\label{eq:bigtree}
        \overline{\splay((\Gamma \times \Delta)^{(+)})} \times \overline{\splay((\Gamma \times \Xi^+)^{(+)})}.
    \end{equation} Since there is a retraction on $(\Gamma \times \Delta)^{(+)}$ with image $\overline{\Gamma \times \Delta}^{(+)} = (\Gamma\Delta)^{(+)}$, and similar for $(\Gamma \times \Xi^+)^{(+)}$, the tree in \eqref{eq:bigtree} is exactly \[\overline{\splay((\Gamma\Delta)^{(+)})} \times \overline{\splay((\Gamma\Xi^+)^{(+)})} = (\Gamma\Delta)^+\times(\Gamma\Xi^+)^+\]by Lemma \ref{lem:splay}\eqref{lem:splay:splayrhogamma}. A suitable path exists here if and only if it exists in $\overline{(\Gamma\Delta)^+\times(\Gamma\Xi^+)^+}$, which is exactly $(\Gamma\Delta)^+(\Gamma\Xi^+)^+$ by Lemma \ref{lem:altproduct} as required.
 \end{proof}

Combining the previous two results, we have the following.

\begin{cor}\label{cor:hislefth}
    The $(2,1,0)$-algebra $\h(X)$ is a left $h$-adequate monoid generated by $X$.
\end{cor}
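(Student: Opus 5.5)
This corollary is obtained by combining the two preceding theorems. A left $h$-adequate monoid is, by definition, a left adequate monoid satisfying the left $h$ condition, so the plan is simply to collect the three ingredients already established.

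First, the theorem immediately following Corollary \ref{cor:hisleftE} shows that $\h(X)$ is a left adequate monoid generated by $X$. This rests on Corollary \ref{cor:hisleftE} for the left Ehresmann identities, obtained via Theorem \ref{thm:hatmor} and Birkhoff's Theorem. It then verifies the two quasi-identities directly, since quasi-varieties are not closed under quotients. Second, the subsequent theorem shows that $\h(X)$ satisfies the left $h$ identity $(xz^+y)^+ = (xy)^+(xz)^+$ for all elements.

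Putting these together, $\h(X)$, with the operations $\Gamma\Delta = \widehat{\Gamma\times\Delta}$ and $\Gamma^+ = \widehat{\Gamma^{(+)}}$ and with the trivial tree as identity, is a left adequate monoid satisfying the left $h$ condition. That is exactly a left $h$-adequate monoid.

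Generation by $X$ is inherited verbatim from the left adequacy theorem. It also follows directly from the fact that $\widehat{\cdot}$ is a surjective $(2,1,0)$-morphism fixing $X$ (Theorem \ref{thm:hatmor}), combined with the fact that $\fleh(X)$ is $X$-generated (Theorem \ref{thm:flem}).

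There is no real obstacle here. The only point to check is that the identity element is a genuine two-sided identity stabilised by ${}^+$. This is already part of $\h(X)$ being a left Ehresmann monoid in Corollary \ref{cor:hisleftE}, so nothing further needs to be verified.
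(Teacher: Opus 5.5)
Your proposal is correct and matches the paper, which obtains this corollary simply by combining the left adequacy theorem (resting on Corollary \ref{cor:hisleftE}) with the theorem that $\h(X)$ satisfies the left $h$ identity. Your extra observation that generation by $X$ also follows from Theorem \ref{thm:hatmor} together with the $X$-generation of $\fleh(X)$ is a valid supplement.
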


\subsection{Freeness} We now show that the left $h$-adequate monoid $\h(X)$ is the free left $h$-adequate monoid on $X$. In particular, we aim to show that given any left $h$-adequate monoid $M$ and any function $\chi: X \to M$, there exists a unique morphism $\h(X) \to M$ extending $\chi$.

Our desired morphism will be a restriction of the morphism $\rho$ previously constructed by Kambites \cite{kambites:freeleft}. Its exact definition is technical, relying on another morphism $\tau$ defined only on idempotents. We recall the definitions of both maps here, whilst encouraging the reader to consult \cite[Section 3]{kambites:freeleft} for comprehensive discussions. Note that we diverge from the original notation used in \cite{kambites:freeleft} to our forthcoming \textit{cone} notation.

Let $B$ be a left $X$-tree. Suppose $e$ is an edge of $B$. We define the \textit{cone of $B$ at $e$}, denoted $\cone_B(e)$, to be the left $X$-tree obtained by relocating both the defined start and end vertices of $B$ to the terminal vertex of $e$, and then taking the largest subgraph reachable from this distinguished vertex.

Given a fixed left Ehresmann monoid $M$ and fixed map $\chi: X \to M$, we define $\tau:\e(X) \to M$ recursively as follows. We define $\tau(\varepsilon) = 1_M$. For any other $B \in \e(X)$, we define\[\tau(B) := \prod_{\substack{e \text{ is an edge of }B\\ \text{with initial vertex}\\ \text{the start vertex of }B}} [\chi(\lambda(e))\,\tau(\cone_B(e))]^+\]where $\lambda(e)$ is the label of the edge $e$. 

We now use $\tau$ to define a map $\rho: \lut(X) \to M$. Given $\Gamma \in \lut(X)$ with branch decomposition $B_0 \times x_1 \times B_1 \times \dots \times x_k \times B_k$, we define\[\rho(\Gamma) := \tau(B_0)\chi(x_1)\tau(B_1)\cdots\chi(x_k)\tau(B_k).\]Clearly $\rho$ extends $\tau$ -- note that a tree $B \in \e(X)$ has branch decomposition simply $B$. These maps were studied in \cite[Section 3]{kambites:freeleft}. We direct the reader there for proofs of the following.

\begin{thm}[{\cite[Section 3]{kambites:freeleft}}]\label{thm:rhoprops}
    Fix a non-empty set $X$, a left Ehresmann monoid $M$, and a function $\chi: X \to M$. Define $\tau$ and $\rho$ as above.
    \begin{enumerate}
        \item The map $\rho$ extends $\chi$.\label{thm:rhoprops:extend}
        \item The map $\rho$ is a $(2,1,0)$-morphism.\label{thm:rhoprops:mor}
        \item For any $\Gamma,\Delta \in \e(X)$, we have $\tau(\Gamma \times \Delta) = \tau(\Gamma)\tau(\Delta)$.\label{thm:rhoprops:taumor}
        \item For any $\Gamma \in \lut(X)$, we have $\rho(\Gamma) = \rho\left(\overline{\Gamma}\right)$.\label{thm:rhoprops:bar}
    \end{enumerate}
\end{thm}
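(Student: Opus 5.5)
The plan is to prove the four parts by induction, following the structure of Kambites' argument in \cite[Section 3]{kambites:freeleft}. We first show $\tau$ is well defined, by induction on the number of edges. Each cone $\cone_B(e)$ at an edge leaving the start vertex is a tree in $\e(X)$ with strictly fewer edges. The product defining $\tau(B)$ is taken over projections, which commute in $M$, so the order of the factors does not matter. Part \eqref{thm:rhoprops:extend} is then immediate. The single-edge tree $x$ has branch decomposition $\varepsilon \times x \times \varepsilon$, so $\rho(x) = 1_M\chi(x)1_M = \chi(x)$.

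For part \eqref{thm:rhoprops:taumor}, the edges leaving the start vertex of $\Gamma \times \Delta$, for $\Gamma,\Delta \in \e(X)$, are the disjoint union of those of $\Gamma$ and those of $\Delta$. The cones are unchanged, since $\Gamma \times \Delta$ is glued at a single vertex. Hence $\tau(\Gamma\times\Delta)$ is the product of the two families of projections, which is $\tau(\Gamma)\tau(\Delta)$ by commutativity of $P(M)$. Multiplicativity of $\rho$ in part \eqref{thm:rhoprops:mor} then follows from the branch decomposition of $\Gamma \times \Delta$ computed in the proof of Lemma \ref{lem:splay}\eqref{lem:splay:splaymor}. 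The merged branch $B_k \times C_0$ contributes $\tau(B_k)\tau(C_0)$, and $\rho(\varepsilon)=1_M$.

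The unary part needs $\rho(\Gamma^{(+)}) = \rho(\Gamma)^+$. We prove it by induction on the number of trunk edges, using the auxiliary identity $\tau(B) = \prod_e (\chi(\lambda(e))\tau(\cone_B(e)))^+$. For $\Gamma = B_0 \times x_1 \times \Gamma'$, the tree $\Gamma^{(+)}$ is $B_0 \times C$, where $C$ is the single-edge-at-root tree $x_1$ with cone $(\Gamma')^{(+)}$. So
\[
\rho(\Gamma^{(+)}) = \tau(B_0)\,(\chi(x_1)\,\rho((\Gamma')^{(+)}))^+ .
\]
By induction this equals $\tau(B_0)(\chi(x_1)\rho(\Gamma')^+)^+ = \tau(B_0)(\chi(x_1)\rho(\Gamma'))^+$. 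By the left Ehresmann identities $e(a)^+ = (ea)^+$ for a projection $e$, and $(ab)^+ = (ab^+)^+$, this is $\rho(\Gamma)^+$.

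Part \eqref{thm:rhoprops:bar} is the main obstacle. Every retraction factors as a sequence of elementary folds: identifying two sibling edges with the same label when one subtree embeds in the other's cone, or folding a branch onto the trunk. So it suffices to show $\rho$ is invariant under one such fold. For a fold at a non-trunk vertex, we use induction on cones. We must show that if $\cone(e_1)$ maps into $\cone(e_2)$ by a morphism, then
\[
\bigl[\chi(x)\tau(\cone(e_1))\bigr]^+\bigl[\chi(x)\tau(\cone(e_2))\bigr]^+ = \bigl[\chi(x)\tau(\cone(e_2))\bigr]^+ .
\]
To get this, we prove the key sublemma that a morphism $B\to B'$ in $\e(X)$ gives $\tau(B') \le \tau(B)$ in $P(M)$. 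Then, in any left Ehresmann monoid, $f\le e$ implies $(af)^+ \le (ae)^+$: indeed $(af)^+ = (aef)^+ = (ae\,f)^+ \le (ae)^+$, because $(uf)^+ \le u^+$. For a fold of a branch onto the trunk, the analogous statement is $\tau(B_i)\chi(x_{i+1})\cdots = \chi(x_{i+1})\cdots$ when the branch path is absorbed into the trunk. Here the relevant identity is $(ab)^+a\,b = ab$, obtained from $u^+u = u$. We need it in the weaker form $[\chi(w)\tau(C)]^+\chi(w)\tau(C)\cdots$, which holds since $\tau(C)$ is a projection and $(ae)^+ae = ae$. Checking that every retraction decomposes into such folds, and handling the trunk case cleanly, is where the technical work lies. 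That is exactly what \cite[Section 3]{kambites:freeleft} carries out.
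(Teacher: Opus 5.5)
The paper gives no proof of this theorem; it only cites \cite[Section 3]{kambites:freeleft}. Most of your reconstruction is correct: the well-definedness of $\tau$, parts (1) and (3), multiplicativity of $\rho$, and the induction on trunk length giving $\rho(\Gamma^{(+)})=\rho(\Gamma)^+$. Also correct are the monotonicity fact ($f\le e$ implies $(af)^+\le(ae)^+$), the sublemma that a morphism $B\to B'$ in $\e(X)$ gives $\tau(B')\le\tau(B)$ (by induction on $B$, comparing each root factor of $\tau(B)$ with the factor of $\tau(B')$ at the image edge), and your treatment of a fold onto a non-trunk sibling.

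The reduction to folds is more routine than you suggest. Let $\phi$ be a retraction with image $\Delta$. Then $\Delta$ is a rooted subtree containing the end vertex, and $\Gamma\setminus\Delta$ consists of subtrees hanging from edges $f$ whose initial vertex $p$ lies in $\Delta$. The map agreeing with $\phi$ on one such $f$ and its hanging subtree, and with the identity elsewhere, is a retraction. Its image still contains $\Delta$, and $\phi$ restricts to a retraction of that image, so you can iterate. Every fold therefore sends an edge $f$ at $p$ onto a sibling $\phi(f)$. The only relevant dichotomy is whether $\phi(f)$ is a trunk edge; a fold at a trunk vertex onto a non-trunk sibling falls under your first case.

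The step that does not work as written is the trunk fold. The identity $\tau(B_i)\chi(x_{i+1})\cdots=\chi(x_{i+1})\cdots$ is false in general. If $B_i$ also has a $y$-leaf at its root, it would force $\chi(y)^+\chi(x_{i+1})\cdots=\chi(x_{i+1})\cdots$. Only the factor contributed by the folded edge may be dropped. Also, ``$\tau(C)$ is a projection'' does not explain why $\tau(C)$ may be inserted after $\chi(x_{i+1})$, and that is the real content of this step.

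Here is how to close it. Write $B_i=B_i'\times D$, where $D$ is the edge $f$ (labelled $x_{i+1}$) together with its cone $C$. Set $\Gamma_{>i}:=B_{i+1}\times x_{i+2}\times\cdots\times x_k\times B_k$. The fold restricts to a morphism $C\to\Gamma_{>i}^{(+)}$ in $\e(X)$. Your sublemma, combined with the already-proved compatibility with ${}^+$, gives $\rho(\Gamma_{>i})^+=\tau(\Gamma_{>i}^{(+)})\le\tau(C)$. Put $a=\chi(x_{i+1})$, $g=\tau(C)$ and $b=\rho(\Gamma_{>i})$. Then $gb=gb^+b=b^+b=b$, and hence
\[(ag)^+ab=(ag)^+agb=agb=ab.\]
Therefore $\tau(B_i)ab=\tau(B_i')\tau(D)ab=\tau(B_i')ab$, which is exactly the invariance of $\rho$ under this fold. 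This is the one place where part (4) depends on the unary half of part (2). With this inserted, your argument is complete and needs nothing deferred to \cite{kambites:freeleft}.
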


We now consider the map $\rho$ when $M$ further satisfies the left $h$ condition.

\begin{lem}\label{lem:rhosplay}
    Fix $X$, $M$ and $\chi$ as above. If $M$ satisfies the left $h$ condition, then $\rho(\Gamma) = \rho(\splay(\Gamma))$ for any $\Gamma \in \lut(X)$.
\end{lem}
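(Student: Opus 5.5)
By the definition of $\rho$ and the fact that $\splay$ acts branch-by-branch, it suffices to show $\tau(B) = \tau(\splay(B))$ for every $B \in \e(X)$. Indeed, if $\Gamma = B_0 \times x_1 \times \dots \times x_k \times B_k$, then $\splay(\Gamma)$ has branch decomposition $\splay(B_0)\times x_1 \times \dots \times x_k \times \splay(B_k)$, since splaying an element of $\e(X)$ yields an element of $\e(X)$. The product formula for $\rho$ then gives the result termwise.

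Next I would reduce the statement about $\tau$ on $\e(X)$ to single paths. Write $\splay(B) = \bigtimes_{v} p_v$ over the leaves $v$ of $B$. By Theorem \ref{thm:rhoprops}\eqref{thm:rhoprops:taumor}, $\tau(\splay(B)) = \prod_v \tau(p_v)$. For a path $p$ labelled $y_1\cdots y_m$, unwinding the recursion gives
\[\tau(p) = (\chi(y_1)\cdots\chi(y_m))^+ .\]
This uses $(a b^+)^+ = (ab)^+$ repeatedly, together with $\tau(\varepsilon)=1_M$. So the goal becomes the identity
\[\tau(B) = \prod_{v \text{ leaf of } B} (\chi(w_v))^+,\]
where $w_v$ is the label of $p_v$ and $\chi$ is extended multiplicatively to words.

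I would prove this by induction on the number of edges of $B$. The case $B=\varepsilon$ is trivial: the empty product is $1_M$, and $\splay(\varepsilon)=\varepsilon$. Otherwise,
\[\tau(B)=\prod_e [\chi(\lambda(e))\,\tau(\cone_B(e))]^+,\]
the product running over edges $e$ leaving the start vertex. The leaves of $B$ are partitioned according to which such edge $e$ they lie beyond, and each $\cone_B(e)$ has fewer edges. The induction hypothesis then gives
\[\tau(\cone_B(e)) = \prod_{u} (\chi(w'_u))^+\]
over the leaves $u$ of the cone, with $w'_u$ the label from the cone's root. When $e$ ends at a leaf this product is empty and equals $1_M$, and consistency with $\chi(\lambda(e))^+$ is immediate. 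So the key step is the lemma
\[\Bigl(a\, e_1 e_2 \cdots e_n\Bigr)^+ = (a e_1)^+ (a e_2)^+ \cdots (a e_n)^+\]
for projections $e_i$ and any $a\in M$. With $a=\chi(\lambda(e))$ and $e_u = (\chi(w'_u))^+$, each term becomes $(a\chi(w'_u))^+ = (\chi(\lambda(e) w'_u))^+$, using $(ab^+)^+=(ab)^+$, which is exactly the factor for the corresponding leaf of $B$. Commutativity of projections handles the ordering of the factors.

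The main work, though mostly algebraic, is this lemma. It is precisely the statement that $\beta_a : e\mapsto (ae)^+$ is a semigroup morphism on $P(M)$, which follows from the left $h$ condition. For $n=2$, take $x=a$, $z=e_1$, $y=e_2$ in $(xz^+y)^+ = (xy)^+(xz)^+$ to get $(ae_1e_2)^+ = (ae_2)^+(ae_1)^+$, and commute the factors. The general case follows by induction on $n$, since a product of projections is a projection. The case $n=0$ reads $(a)^+ = (a\cdot 1)^+$, which is fine. The remaining care lies only in the bookkeeping that matches leaves of the cones with leaves of $B$.
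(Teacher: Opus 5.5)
Your proposal is correct and is essentially the paper's proof. You reduce to $\tau(B)=\tau(\splay(B))$ branch by branch, induct (you on the number of edges, the paper on height), and use the left $h$ condition recursively to split $\bigl(\chi(\lambda(e))\prod_u e_u\bigr)^+$ over the leaves lying beyond each edge $e$ leaving the root. Your closed formula $\tau(B)=\prod_v \chi(w_v)^+$ is just $\tau(\splay(B))$ written out.

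One small slip: the $n=0$ instance of your distributivity identity would assert $a^+=1_M$, which is false. It is never needed, though. Under the paper's definition the root of the trivial tree is a leaf, so every cone has at least one leaf, and a trivial cone contributes the single factor $1_M$ rather than an empty product.
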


\begin{proof}
    We first claim that $\tau(B) = \tau(\splay(B))$ for any $B \in \e(X)$.
    
    Let $B \in \e(X)$ be given. Define the \textit{height} of $B$ to be a maximal number of edges in a directed path with initial vertex the start vertex of $B$. We show that $\tau(B) = \tau(\splay(B))$ by induction on the height of $B$. Certainly if $B$ is of height at most $1$, then $B = \splay(B)$ and the result holds immediately. Now suppose that $\tau(C) = \tau(\splay(C))$ for all $C \in \e(X)$ of height at most $k$, for some $k \geq 1$.

    Let $B \in \e(X)$ have height $k + 1$. Let $e_1,\dots,e_l$ be the edges of $B$ with initial vertex the start vertex of $B$. By definition, $\tau(B) = \prod_{i=1}^l(\chi(\lambda(e_i))\,\tau(\cone_B(e_i))\,)^+$. For any vertex $v$ of $B$, excluding the start vertex, there is a unique edge $e_v \in \{e_1,\dots,e_l\}$ for which $v$ is reachable from the terminal vertex of $e_v$. Let $p_v$ be the left $X$-tree consisting of the directed path from the terminal vertex of $e_v$ to $v$ (with start and end vertex defined as the initial vertex of this path). Note then that \[\tau(\splay(B)) = \tau\Bigl( \bigtimes_{\substack{v\text{ is a}\\\text{leaf of }B}} (e_v\times p_v)^{(+)} \Bigr).\]By definition, observe that $\tau((e_v \times p_v)^{(+)}) = (\chi(\lambda(e_v))\,\tau(p_v))^+$. Thus by Theorem \ref{thm:rhoprops}\eqref{thm:rhoprops:taumor}, it follows that \[\tau(\splay(B)) = \prod_{\substack{v\text{ is a}\\\text{leaf of }B}} \tau((e_v\times p_v)^{(+)}) = \prod_{\substack{v\text{ is a}\\\text{leaf of }B}}(\chi(\lambda(e_v))\,\tau(p_v))^+.\]Since projections commute, we may partition this product according to the edges $e_v$ to see that\[\tau(\splay(B)) = \prod_{i=1}^l \prod_{\substack{v\text{ is a}\\\ \substack{\text{leaf of }B}\\\text{with }e_v = e_i}} (\chi(\lambda(e_i))\,\tau(p_v))^+.\]Consider this inner product. By applying the left $h$ condition recursively, we have that \[\prod_{\substack{v\text{ is a}\\\ \substack{\text{leaf of }B}\\\text{with }e_v = e_i}} (\chi(\lambda(e_i))\,\tau(p_v))^+ = \Bigl(\chi(\lambda(e_i)) \cdot \prod_{\substack{v\text{ is a}\\\ \substack{\text{leaf of }B}\\\text{with }e_v = e_i}} \tau(p_v) \Bigr)^+.\]This is, by Theorem \ref{thm:rhoprops}\eqref{thm:rhoprops:taumor}, exactly,\[\Bigl(\chi(\lambda(e_i)) \cdot \prod_{\substack{v\text{ is a}\\\ \substack{\text{leaf of }B}\\\text{with }e_v = e_i}} \tau(p_v) \Bigr)^+ = \left(\chi(\lambda(e_i))\,\tau(\splay(\cone_B(e_i)))\,\right)^+.\] Since the height of $\splay(\cone_B(e_i))$ is exactly the height of $\cone_B(e_i)$, which is at most $k$, we may appeal to our inductive hypothesis to see that\[\tau(\splay(\cone_B(e_i))) = \tau(\cone_B(e_i)).\]Hence overall\[\tau(\splay(B)) = \prod_{i=1}^l (\chi(\lambda(e_i))\,\tau(\cone_B(e_i)))^+ = \tau(B)\]as claimed, concluding our induction. We now show the result for $\rho$. 
    
    Let $\Gamma \in \lut(X)$ have branch decomposition $B_0 \times x_1 \times B_1 \times \dots \times x_k \times B_k$. Then\begin{align*}
        \rho(\splay(\Gamma)) &= \rho(\splay(B_0) \times x_1 \times \splay(B_1) \times \dots \times x_k \times \splay(B_k))\\
        &= \tau(\splay(B_0))\chi(x_1)\tau(\splay(B_1))\cdots\chi(x_k)\tau(\splay(B_k))\\
        &= \tau(B_0)\chi(x_1)\tau(B_1)\cdots\chi(x_k)\tau(B_k)\\
        &= \rho(\Gamma).\qedhere
    \end{align*}
\end{proof}

It follows from Theorem \ref{thm:rhoprops}\eqref{thm:rhoprops:bar} and Lemma \ref{lem:rhosplay} that $\rho\left(\widehat{\Gamma}\right) = \rho(\Gamma)$. We are now in a position to show the following crucial result, recalling that one may consider $\h(X)$ as a subset of $\lut(X)$.

\begin{thm}\label{thm:rhohat}
    Fix a non-empty set $X$, a left $h$-adequate monoid $M$ and a map $\chi:X \to M$. Define $\rho$ as above, and let $\widehat{\rho}$ be the restriction of $\rho : \lut(X) \to M$ to $\h(X)$. Then $\hat{\rho} : \h(X) \to M$ is a $(2,1,0)$-morphism.
\end{thm}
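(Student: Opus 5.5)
The proof will reduce everything to the fact that $\rho:\lut(X)\to M$ is already a $(2,1,0)$-morphism (Theorem \ref{thm:rhoprops}\eqref{thm:rhoprops:mor}). The only subtlety is that the operations on $\h(X)$ are not the raw gluing operations but their images under $\widehat{\cdot}$. The key tool is the observation made just before the statement: $\rho(\widehat{\Gamma}) = \rho(\Gamma)$ for every $\Gamma\in\lut(X)$. This combines Theorem \ref{thm:rhoprops}\eqref{thm:rhoprops:bar} with Lemma \ref{lem:rhosplay}, and it is the only place where the left $h$ condition on $M$ enters. Left adequacy of $M$ is in fact not needed beyond $M$ being left Ehresmann with the $h$ condition, since Kambites' maps only require the left Ehresmann structure.

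First, for the identity: the distinguished element of $\h(X)$ is the trivial tree $\varepsilon$, and $\widehat{\rho}(\varepsilon)=\rho(\varepsilon)=\tau(\varepsilon)=1_M$ by definition.

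Next, for multiplication, take $\Gamma,\Delta\in\h(X)$. By definition their product in $\h(X)$ is $\widehat{\Gamma\times\Delta}$, so
\[\widehat{\rho}(\Gamma\Delta)=\rho\bigl(\widehat{\Gamma\times\Delta}\bigr)=\rho(\Gamma\times\Delta)=\rho(\Gamma)\rho(\Delta)=\widehat{\rho}(\Gamma)\widehat{\rho}(\Delta).\]
The second equality is the observation above, and the third uses that $\rho$ respects $\times$. Equivalently, one could use Lemma \ref{lem:altproduct} and Theorem \ref{thm:rhoprops}\eqref{thm:rhoprops:bar} alone.

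Finally, for the unary operation, take $\Gamma\in\h(X)$. Then $\Gamma^+=\widehat{\Gamma^{(+)}}$ in $\h(X)$, so
\[\widehat{\rho}(\Gamma^+)=\rho\bigl(\widehat{\Gamma^{(+)}}\bigr)=\rho(\Gamma^{(+)})=\rho(\Gamma)^+=\widehat{\rho}(\Gamma)^+,\]
again by the observation and the fact that $\rho$ respects ${}^{(+)}\mapsto{}^+$.

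I expect the main obstacle to be ensuring the cited facts about $\rho$ apply in exactly the form needed. Specifically, Theorem \ref{thm:rhoprops}\eqref{thm:rhoprops:mor} must be read as $\rho(\Gamma\times\Delta)=\rho(\Gamma)\rho(\Delta)$ and $\rho(\Gamma^{(+)})=\rho(\Gamma)^+$ on raw trees in $\lut(X)$, rather than on pruned trees. If it is only available for $\operatorname{T}(X)$ with its retracted operations, I would insert a retraction step using Theorem \ref{thm:rhoprops}\eqref{thm:rhoprops:bar}, namely $\rho(\overline{\Gamma\times\Delta})=\rho(\Gamma\times\Delta)$, and similarly for ${}^{(+)}$. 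That chain of equalities still closes, because $\widehat{\cdot}$ factors through $\overline{\cdot}$ and both $\overline{\cdot}$ and $\splay$ are invisible to $\rho$.
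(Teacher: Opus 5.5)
Your proposal is correct and follows essentially the same route as the paper. The identity is handled by definition, and the unary operation uses Theorem \ref{thm:rhoprops}\eqref{thm:rhoprops:bar} together with Lemma \ref{lem:rhosplay}. For the product, the paper uses Lemma \ref{lem:altproduct} rather than $\rho(\widehat{\Gamma})=\rho(\Gamma)$, so the $h$ condition is needed only for ${}^+$; you note this alternative yourself. Your worry about raw trees is unfounded, since Theorem \ref{thm:rhoprops}\eqref{thm:rhoprops:mor} states $\rho$ is a $(2,1,0)$-morphism on $\lut(X)$ with $\times$ and ${}^{(+)}$.
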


\begin{proof}
    Let $\Gamma,\Delta \in \h(X)$. We leverage Theorem \ref{thm:rhoprops} and Lemma \ref{lem:rhosplay} to show our required properties. Firstly,\begin{align*}
        \hat\rho(\Gamma\Delta) &= \rho(\Gamma\Delta) &&\\
        &= \rho\left(\overline{\Gamma \times \Delta} \right) && \text{(by Lemma \ref{lem:altproduct})}\\
        &= \rho(\Gamma \times \Delta) && \text{(by Theorem \ref{thm:rhoprops}\eqref{thm:rhoprops:bar})}\\
        &= \rho(\Gamma)\rho(\Delta) && \text{(by Theorem \ref{thm:rhoprops}\eqref{thm:rhoprops:mor})}\\
        &= \hat\rho(\Gamma)\hat\rho(\Delta).
    \end{align*}Secondly,\begin{align*}
        \hat\rho(\Gamma^+) &= \rho(\Gamma^+) &&\\
        &= \rho\left(\overline{\splay(\Gamma^{(+)})}\right) && \\
        &= \rho\left(\splay\left(\Gamma^{(+)}\right)\right) && \text{(by Theorem \ref{thm:rhoprops}\eqref{thm:rhoprops:bar})}\\
        &= \rho(\Gamma^{(+)}) && \text{(by Lemma \ref{lem:rhosplay})}\\
        &= \rho(\Gamma)^+ && \text{(by Theorem \ref{thm:rhoprops}\eqref{thm:rhoprops:mor})}\\
        &= \widehat{\rho}(\Gamma)^+. 
    \end{align*}Finally, we certainly have $\widehat{\rho}(\varepsilon) = \rho(\epsilon) = 1_M$.
\end{proof}

\begin{cor}
    For any non-empty set $X$, the $(2,1,0)$-algebra $\h(X)$ is the free left $h$-adequate monoid on $X$.
\end{cor}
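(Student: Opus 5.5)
We must show that for any left $h$-adequate monoid $M$ and any map $\chi: X \to M$ there is a unique $(2,1,0)$-morphism $\h(X) \to M$ extending $\chi$. By Corollary \ref{cor:hislefth}, $\h(X)$ is itself a left $h$-adequate monoid, so it lies in the class.

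\textbf{Existence.} Given $M$ and $\chi$, construct $\tau$ and $\rho$ as above and take $\widehat{\rho}$, the restriction of $\rho$ to $\h(X)$. Theorem \ref{thm:rhohat} says $\widehat{\rho}$ is a $(2,1,0)$-morphism. It extends $\chi$: each single-edge tree $x$ lies in $\h(X)$, since it is pruned and fixed by $\splay$, and $\rho(x) = \chi(x)$ by Theorem \ref{thm:rhoprops}\eqref{thm:rhoprops:extend}.

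\textbf{Uniqueness.} By Corollary \ref{cor:hislefth}, $\h(X)$ is generated by $X$ as a $(2,1,0)$-algebra. Two $(2,1,0)$-morphisms that agree on a generating set agree on the subalgebra it generates, which is all of $\h(X)$. Hence any morphism extending $\chi$ equals $\widehat{\rho}$. The generation claim itself follows because $\widehat{\cdot}: \fleh(X) \to \h(X)$ is a surjective morphism fixing $X$ (Theorem \ref{thm:hatmor}), and $\fleh(X)$ is $X$-generated.

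\textbf{Main point and consequence.} The only subtle step is existence, and it is already handled by Theorem \ref{thm:rhohat}, which relies on Lemma \ref{lem:rhosplay} and hence on the left $h$ condition in $M$. The argument uses only that $M$ is left $h$-Ehresmann, so $\h(X)$ is also the free left $h$-Ehresmann monoid on $X$. The rank-$0$ case, where the free object is the trivial algebra, is excluded by hypothesis.
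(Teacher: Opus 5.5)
Your proposal is correct and follows essentially the same argument as the paper. $\h(X)$ is left $h$-adequate by Corollary \ref{cor:hislefth}. Existence comes from the restriction $\widehat{\rho}$ of Theorem \ref{thm:rhohat}, which extends $\chi$ by Theorem \ref{thm:rhoprops}\eqref{thm:rhoprops:extend}, and uniqueness comes from $\h(X)$ being $X$-generated. Your closing remark that only the left $h$-Ehresmann property of $M$ is used is the observation the paper records as Corollary \ref{cor:hEhrandhAdcoincide}.
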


\begin{proof}
    By Corollary \ref{cor:hislefth}, $\h(X)$ is a left $h$-adequate monoid. Moreover, for any left $h$-adequate monoid $M$ and map $\chi : X \to M$, define $\widehat{\rho}$ to be the $(2,1,0)$-morphism as in Theorem \ref{thm:rhohat}. Observe that for any $x \in X$, certainly $\widehat{\rho}(x) = \rho(x) = \chi(x)$ by Theorem \ref{thm:rhoprops}\eqref{thm:rhoprops:extend}. Since $\h(X)$ is $X$-generated (Corollary \ref{cor:hislefth}), it follows that $\widehat{\rho}$ is entirely determined by its restriction to $X$ and so must be the unique morphism with the required properties.
\end{proof}

\section{Properties}\label{sec:props}

We collect here some properties of free left $h$-adequate monoids using the description given in Section \ref{sec:splaying}.

\subsection{Other free objects}

We first note that the construction of the morphism $\widehat{\rho}$ in Theorem \ref{thm:rhohat} remains valid when $M$ is only left $h$-Ehresmann. As such, we immediately observe the following.

\begin{cor}\label{cor:hEhrandhAdcoincide}
    The free left $h$-Ehresmann monoid and the free left $h$-adequate monoid coincide in all ranks.
\end{cor}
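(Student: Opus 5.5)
The plan is to show that $\h(X)$ is also free in the variety of left $h$-Ehresmann monoids. Since free objects are unique up to isomorphism, the two free objects will then coincide. Rank $0$ is trivial (both are the trivial $(2,1,0)$-algebra), so fix a non-empty set $X$.

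First, $\h(X)$ lies in the class. By Corollary \ref{cor:hislefth} it is a left $h$-adequate monoid, and in particular a left Ehresmann monoid satisfying the left $h$ identity. Hence it is a left $h$-Ehresmann monoid generated by $X$.

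Second, fix an arbitrary left $h$-Ehresmann monoid $M$ and a function $\chi : X \to M$. Take $\tau$ and $\rho$ exactly as in the construction preceding Theorem \ref{thm:rhoprops}. Theorem \ref{thm:rhoprops} only requires $M$ to be left Ehresmann, so $\rho$ is a $(2,1,0)$-morphism $\lut(X)\to M$ extending $\chi$, it is multiplicative on $\e(X)$ via $\tau$, and it is invariant under retraction. Next, Lemma \ref{lem:rhosplay} only used the left $h$ condition, together with commutativity of projections and the multiplicativity of $\tau$. It never used the adequacy quasi-identities, so $\rho(\splay(\Gamma)) = \rho(\Gamma)$ holds for $M$ as well.

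I will then re-run the computation in Theorem \ref{thm:rhohat}. It uses Lemma \ref{lem:altproduct}, Theorem \ref{thm:rhoprops}\eqref{thm:rhoprops:bar}, Theorem \ref{thm:rhoprops}\eqref{thm:rhoprops:mor} and Lemma \ref{lem:rhosplay}, and none of these involves adequacy. So the restriction $\widehat\rho$ is a $(2,1,0)$-morphism $\h(X)\to M$ extending $\chi$. Uniqueness follows because $\h(X)$ is $X$-generated. Thus $\h(X)$ is free on $X$ in the class of left $h$-Ehresmann monoids, and it is also the free left $h$-adequate monoid, which gives the result. By Proposition \ref{prop:adjoiningidentity}, removing the identity yields the analogous statement for semigroups if desired.

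The only real obstacle is the bookkeeping: checking that no step in Lemma \ref{lem:rhosplay} or Theorem \ref{thm:rhoprops} quietly invokes left adequacy. This matters most where the product of projections $\prod(\chi(\lambda(e_i))\tau(p_v))^+$ is collapsed by repeated use of the left $h$ condition. For that step I would verify by induction on the number of factors that $(a e)^+ (a f)^+ = (a e f)^+$ for projections $e,f$. This is precisely the left $h$ identity with $z^+ = e$ and $y = f$ (using $(af)^+ = (af^+)^+$), so it holds in any left $h$-Ehresmann monoid.
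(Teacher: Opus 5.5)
Your proposal is correct and is essentially the paper's argument. The paper simply observes that the construction of $\widehat{\rho}$ in Theorem \ref{thm:rhohat} remains valid when $M$ is merely left $h$-Ehresmann: it rests on Theorem \ref{thm:rhoprops}, which needs only a left Ehresmann target, and on Lemma \ref{lem:rhosplay}, which needs only the left $h$ condition. Hence $\h(X)$ is free in both classes. Your write-up makes this bookkeeping explicit, including the correct point that the factors $\tau(p_v)$ are projections, so the collapse of the product in Lemma \ref{lem:rhosplay} is an instance of the left $h$ identity.
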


By Proposition \ref{prop:adjoiningidentity}, we also obtain a description for free left $h$-Ehresmann/adequate semigroups and monoids.

\begin{cor}
    For any non-empty set $X$, the free left $h$-Ehresmann semigroup on $X$ is exactly the $(2,1)$-algebra $\h(X) \setminus \{\varepsilon\}$.
\end{cor}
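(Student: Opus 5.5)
The plan is to deduce this directly from Proposition \ref{prop:adjoiningidentity} together with Corollary \ref{cor:hEhrandhAdcoincide} and the preceding corollary identifying $\h(X)$ as the free left $h$-adequate monoid.

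\textbf{Step 1: $\h(X)\setminus\{\varepsilon\}$ is a left $h$-Ehresmann subalgebra of signature $(2,1)$.} By Corollary \ref{cor:hEhrandhAdcoincide}, $\h(X)$ is the free left $h$-Ehresmann monoid on $X$. I would check closure of $S := \h(X)\setminus\{\varepsilon\}$ under the operations.
\begin{itemize}
\item \emph{Products.} If $\Gamma\Delta = \overline{\Gamma\times\Delta} = \varepsilon$, then the trunk of $\Gamma\times\Delta$ is empty. So both $\Gamma$ and $\Delta$ lie in $\e(X)$. Retraction is a morphism that preserves the start vertex and labels, so any edge at the start vertex of $\Gamma$ still maps to an edge of $\overline{\Gamma\times\Delta}$ with the same label. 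Hence a tree with at least one edge cannot retract to $\varepsilon$, which forces $\Gamma=\Delta=\varepsilon$.
\item \emph{Plus.} $\Gamma^+ = \overline{\splay(\Gamma^{(+)})}$. Splaying preserves the existence of an edge at the start vertex: a first trunk edge or branch edge gives a leaf path starting with that label. The same retraction argument then shows $\Gamma^+\neq\varepsilon$ whenever $\Gamma\neq\varepsilon$.
\end{itemize}
Thus $S$ is a $(2,1)$-subalgebra containing $X$. Since the class is a variety, $S$ is left $h$-Ehresmann.

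\textbf{Step 2: identify the free semigroup.} Let $F(X)$ be the free left $h$-Ehresmann semigroup. By Proposition \ref{prop:adjoiningidentity}, $F(X)\sqcup\{1\}$ is the free left $h$-Ehresmann monoid, so it is isomorphic to $\h(X)$ via an isomorphism fixing $X$. This isomorphism sends $1$ to $\varepsilon$, because both are the distinguished identities.

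It therefore restricts to a bijection $F(X)\to S$ that respects multiplication and ${}^+$. Here we use that $F(X)$ is closed in $F(X)\sqcup\{1\}$, with $1$ adjoined outside it, and that $S$ is exactly the complement of $\varepsilon$. So $F(X)\cong S$ as $(2,1)$-algebras, fixing $X$.

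\textbf{Main obstacle.} The only non-formal point is Step 1, namely that $\varepsilon$ does not arise as a product or ${}^+$ of non-trivial elements, i.e.\ that $\varepsilon$ is not a proper retract. I would handle it via the observation that a retraction maps the edge set onto the edges of the image while fixing the start vertex.
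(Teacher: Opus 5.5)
Your proposal is correct and is essentially the paper's argument: the paper obtains this corollary directly from Proposition \ref{prop:adjoiningidentity} together with the identification of $\h(X)$ as the free left $h$-Ehresmann monoid (Corollary \ref{cor:hEhrandhAdcoincide}), exactly as in your Step 2. Your Step 1 is valid but not needed, because the isomorphism $F(X)\sqcup\{1\}\to\h(X)$ fixing $X$ sends $1$ to $\varepsilon$, so it automatically carries the closed subset $F(X)$ onto $\h(X)\setminus\{\varepsilon\}$ and forces the latter to be a $(2,1)$-subalgebra.
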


\begin{cor}
    The free left $h$-Ehresmann semigroup and the free left $h$-adequate semigroup coincide in all ranks.
\end{cor}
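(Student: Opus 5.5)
The plan is to compare free objects via the previous corollaries, since the statement is about semigroups rather than monoids. By Corollary \ref{cor:hEhrandhAdcoincide}, the free left $h$-Ehresmann monoid on $X$ and the free left $h$-adequate monoid on $X$ coincide, both being $\h(X)$. Proposition \ref{prop:adjoiningidentity} then identifies each of these monoids with the corresponding free semigroup with an identity adjoined: the free left $h$-Ehresmann monoid is $F(X)\sqcup\{1\}$, and the free left $h$-adequate monoid is $A(X)\sqcup\{1\}$. I would recover $F(X)$ and $A(X)$ by deleting the adjoined identity and compare them.

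For this to be well defined, the adjoined identity must be intrinsic to the monoid. In $\h(X)$ the identity is the trivial tree $\varepsilon$, so the task is to check that $\h(X)\setminus\{\varepsilon\}$ is a $(2,1)$-subalgebra. A product $\Gamma\Delta=\overline{\Gamma\times\Delta}$ of nontrivial pruned trees is nontrivial, because retraction preserves the start vertex together with at least one edge leaving it. The same holds for $\Gamma^+$, since splaying and retraction never remove every edge at the start vertex. Both $F(X)$ and $A(X)$ are therefore isomorphic, as $(2,1)$-algebras, to $\h(X)\setminus\{\varepsilon\}$, which is the preceding corollary. A monoid isomorphism sends $1$ to $1$, so it restricts to an isomorphism of the complements.

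To be careful, I would argue directly from universal properties, which avoids having to identify $\h(X)\setminus\{\varepsilon\}$ concretely. First, $A(X)$ is a left $h$-Ehresmann semigroup generated by $X$, so there is a surjective $(2,1)$-morphism $\pi\colon F(X)\to A(X)$ fixing $X$. Extend $\pi$ to the monoids by sending $1\mapsto 1$. This extension is the unique $(2,1,0)$-morphism between the two free monoids fixing $X$.

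By Corollary \ref{cor:hEhrandhAdcoincide}, the free left $h$-Ehresmann monoid is itself left $h$-adequate. It is therefore also free among left $h$-adequate monoids, so this canonical morphism is an isomorphism. Restricting it to $F(X)$ shows that $\pi$ is injective, and hence $\pi$ is an isomorphism.

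The main obstacle is the subtlety that the adjoined $1$ might coincide with the image of some semigroup element. This is ruled out because the extended map sends $1\mapsto1$ and $F(X)$ into $A(X)$ by construction, while $1\notin A(X)$ in the disjoint union. With that settled, the rest is routine.
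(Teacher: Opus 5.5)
Your proposal is correct and follows essentially the same route as the paper, which obtains this corollary by combining Corollary \ref{cor:hEhrandhAdcoincide} with both parts of Proposition \ref{prop:adjoiningidentity}, so that both free semigroups are identified with $\h(X)\setminus\{\varepsilon\}$. Your universal-property argument via the canonical map $\pi$ is a more explicit version of that deduction and also covers rank $0$; the separate closure check on $\h(X)\setminus\{\varepsilon\}$ is harmless but unnecessary, since it already follows from the monoid isomorphism $\h(X)\cong F(X)\sqcup\{1\}$, which sends $\varepsilon$ to $1$.
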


Finally, one can consider the dual of the whole of Section \ref{sec:splaying} to obtain a similar description of free right $h$-Ehresmann and $h$-adequate semigroups. The unary operations required are ${}^{(\ast)}$ on \textit{right $X$-trees} (as seen in \cite{kambites:freeleft}) and then $\Gamma^\ast := \overline{\splay(\Gamma^{(\ast)})}$ where leaves are interpreted instead as sources and the operation $\splay$ is interpreted for right $X$-trees in the natural way.

\subsection{Adequacy}

We have seen that free left $h$-Ehresmann monoids are left $h$-adequate. Since free left Ehresmann monoids are two-sided adequate, one may expect free left $h$-Ehresmann monoids to be two-sided $h$-adequate. Indeed, this was observed by Fountain \cite[Corollary 3.4]{fountain:h} -- we provide an alternative proof here using our tree framework. Our approach has the advantages of exactly describing the unary operation ${}^\ast$, along with an easier visualisation of the element $\Gamma^\ast$ from a given tree $\Gamma$.

\begin{thm}\label{thm:hishadequate}
    Free left $h$-adequate monoids are $h$-adequate.
\end{thm}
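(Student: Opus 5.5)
We mirror the proof of Theorem \ref{thm:flehisrhad}, working in the model $\h(X)$. In rank $0$ the statement is trivial, so fix non-empty $X$. For $\Gamma \in \h(X)$ we define $\Gamma^\ast$ as follows. If $\Gamma$ has no trunk edges (so $\Gamma$ is a projection), set $\Gamma^\ast := \Gamma$. Otherwise take the branch decomposition $\Gamma = B_0 \times x_1 \times \dots \times x_k \times B_k$ with $k \geq 1$, and set $\Gamma^\ast := B_k$, the final branch. We must check that $B_k \in \h(X)$. By \eqref{eq:splaygammaisgamma} it is fixed by $\splay$, being a branch of a splay-fixed tree. It is also pruned, since any nontrivial retraction of $B_k$ would extend to a nontrivial retraction of $\Gamma$ fixing everything outside $B_k$.

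The key steps are as follows.
\begin{enumerate}
\item Verify the right Ehresmann identities. First, $\Gamma\Gamma^\ast = \overline{\Gamma \times B_k} = \Gamma$, since $B_k \times B_k$ retracts onto $B_k$ by folding the two copies together. Next, projections commute under $\overline{\,\cdot\,}$ of products. Finally, $(\Gamma\Delta)^\ast = (\Gamma^\ast\Delta)^\ast$: this is immediate when $\Delta$ has trunk edges, since both sides equal $\Delta^\ast$. When $\Delta$ is a projection, the last branch of $\overline{\Gamma\times\Delta}$ is determined by $\overline{B_k\times\Delta}$, because retractions of $\Gamma\times\Delta$ cannot move trunk vertices. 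Then $\overline{B_k \times \Delta} = \Gamma^\ast\Delta$ is itself a projection.
\item Verify the linking identities. For a projection $E$ we have $E^\ast = E = E^+$. Also $(\Gamma^\ast)^+ = \Gamma^\ast$, since $\Gamma^\ast$ is a projection fixed by $\widehat{\cdot}$ by \eqref{eq:hathatishat}.
\item Verify the right adequacy quasi-identities. Idempotents are projections by the left adequacy already shown. For $\Xi\Gamma = \Xi\Delta \Rightarrow \Xi^\ast\Gamma = \Xi^\ast\Delta$, compare the trees $\overline{\Xi \times \Gamma}$ and $\overline{\Xi\times\Delta}$: they have equal trunks, so $\Gamma$ and $\Delta$ have the same trunk label. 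We then compare branchwise, replacing the prefix of $\Xi$ before its last branch by nothing. The branches of $\overline{\Xi\times\Gamma}$ lying strictly after the end of $\Xi$ are exactly those of $\Gamma$, while the branch at the junction is $\overline{\Xi^\ast \times B_0(\Gamma)}$-like. Deleting the initial part of $\Xi$ is compatible with retraction, because that part sits behind trunk edges and retractions preserve the trunk.
\item Verify the right $h$ condition exactly as in Theorem \ref{thm:flehisrhad}. If $y$ has a trunk edge, all of $xz^\ast y$, $xy$ and $zy$ have $^\ast$ equal to $y^\ast$. Otherwise everything reduces to products of projections $x^\ast z^\ast y^\ast$ via the identity $(ab)^\ast=(a^\ast b)^\ast$ from step 1.
\end{enumerate}
Adequacy identities on the left side are already known, so the left $h$ condition together with the above gives that $\h(X)$ is $h$-adequate.

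The main obstacle is the retraction bookkeeping in steps 1 and 3. We need the claim that for $\Gamma$ with last branch $B_k$ and a projection $E$, the last branch of $\overline{\Gamma\times E}$ equals $\overline{B_k \times E}$, and that the earlier branches are unaffected. The earlier branches could in principle be folded into paths reading through the trunk into $E$. The first approach to try is to note that $\overline{\Gamma \times E} = \overline{\Gamma \times \overline{B_k\times E}}$ by Lemma \ref{lem:barismorphism}\eqref{lem:barismorphism:bartimesbar}. This leaves a possibly unpruned tree whose earlier branches may further retract, but whose last branch stays pruned, since retractions map the last trunk vertex to itself and map vertices reachable from it to vertices reachable from it. That suffices, because $^\ast$ only reads the last branch. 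For step 3, the cleaner route may be to invoke \cite[Corollary 6.6]{gould:coherent}-style arguments, or the dual of Lemma \ref{lem:barismorphism}\eqref{lem:barismorphism:adequatequasi}, adapted to last branches.
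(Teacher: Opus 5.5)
Your proof is correct. Step 3 is essentially the paper's argument: idempotents are projections by left adequacy, and $\Xi\Gamma=\Xi\Delta$ gives $\Xi^\ast\Gamma=\Xi^\ast\Delta$ because a retraction of $\Xi_0\times\Xi^\ast\times\Gamma$ restricts to one of the subtree reachable from the last trunk vertex of $\Xi$.

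Where you differ is in steps 1, 2 and 4. The paper never checks the right Ehresmann identities, the linking identities or the right $h$ condition inside $\h(X)$. It only checks that the surjection $\widehat{\cdot}:\fleh(X)\to\h(X)$ satisfies $(\widehat{\Gamma})^\ast=\widehat{\Gamma^\ast}$. Since $\fleh(X)$ with the last-branch ${}^\ast$ is Ehresmann (Gould and Johnson) and right $h$-adequate (Theorem \ref{thm:flehisrhad}), all these identities pass to $\h(X)$ by Birkhoff's theorem. Only the quasi-identities, which do not pass to quotients, are done by hand. Your direct verification is self-contained and does not lean on the Gould--Johnson Ehresmann structure, but it repeats the same retraction bookkeeping several times.

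That bookkeeping is cleanest stated once as a lemma. Let $T$ be a left $X$-tree, $v$ a trunk vertex, and $T_v$ the part of $T$ reachable from $v$, with start $v$ and end the end of $T$. A retraction of $T$ fixes the trunk and maps $T_v$ into itself. Hence the part of $\overline{T}$ reachable from $v$ is exactly $\overline{T_v}$; it is pruned because any nontrivial retraction of it extends by the identity.

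Two places in your write-up are stated inaccurately, and the lemma fixes both.
\begin{itemize}
\item In your final paragraph, the last branch of $\Gamma\times\overline{B_k\times E}$ is $B_k\times\overline{B_k\times E}$. This is not pruned unless $B_k$ is trivial. You want the prefix $B_0\times x_1\times\dots\times x_k$ in place of $\Gamma$, or simply apply the lemma to $\Gamma\times E$ directly.
\item In step 3, the junction branch of $\overline{\Xi\times\Gamma}$ need not be $\overline{\Xi^\ast\times C_0}$, where $C_0$ is the first branch of $\Gamma$. Parts of $\Xi^\ast$ can fold onto the trunk of $\Gamma$: take $\Xi^\ast$ a single $y$-edge and $\Gamma=y$. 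So compare the whole subtree reachable from the last trunk vertex of $\Xi$ at once, as the paper does, rather than branchwise.
\end{itemize}

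Finally, the Gould--Johnson-style alternative you gesture at can be made exact. By Lemma \ref{lem:altproduct}, and since last branches of elements of $\h(X)$ lie in $\h(X)$, the structure $(\h(X),\cdot,{}^\ast,\varepsilon)$ is a subalgebra of $(\fleh(X),\cdot,{}^\ast,\varepsilon)$. Quasi-identities pass to subalgebras, so right adequacy and the right $h$ condition are inherited directly. The linking identities then follow from two-sided adequacy.
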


\begin{proof}
    Fix a set $X$. Let $\Delta \in \lut(X)$ and suppose $\Delta$ has branch decomposition \[C_0 \times y_1 \times C_1 \times \dots \times y_l \times C_l.\]Define the left $X$-tree $\Delta^\ast := C_l$. When restricted to $\fleh(X)$, this is exactly the unary operation described in Theorem \ref{thm:flehisrhad} which imbues $\fleh(X)$ with an Ehresmann and a right $h$-adequate structure.
    
    Suppose further that $\Delta \in \h(X)$. By considering the branch decomposition of $\Delta$, we see that \[\overline{\splay(\Delta^\ast)} = \splay(\Delta^\ast) = \Delta^\ast\]and thus $\Delta^\ast \in \h(X)$. Hence ${}^\ast$ is a unary operation on $\h(X)$. We will show that the $(2,1,0)$-algebra $\h(X)$ is $h$-adequate when further equipped with ${}^\ast$.

    Recall from Theorem \ref{thm:hatmor} that the map $\widehat{\cdot} : \fleh(X) \to \h(X)$ is a surjective $(2,1,0$)-morphism. We claim that this morphism also respects the unary operation ${}^\ast$ defined on both structures. Indeed, let $\Gamma \in \fleh(X)$ and suppose $\Gamma$ has branch decomposition $B_0 \times x_1 \times B_1 \times \dots \times x_k \times B_k$. By considering the branch decomposition of $\widehat{\Gamma}$, one sees that $\left(\widehat{\Gamma}\right)^\ast$ is exactly $\overline{\splay(B_k)} = \widehat{B_k} = \widehat{\Gamma^\ast}$. Our claim is proved.

    Since right $h$-Ehresmann monoids form a variety, it immediately follows from Theorem \ref{thm:flehisrhad} and Birkhoff's Theorem \cite[Theorem 10]{birkhoff:struct} that $\h(X)$ is right $h$-Ehresmann. By Corollary \ref{cor:hislefth}, $\h(X)$ is left $h$-adequate and it only remains to show right adequacy. We show the two required quasi-identities.

    Let $\Gamma \in \h(X)$ and suppose $\Gamma^2 = \Gamma$. By left adequacy, we have that $\Gamma = \Gamma^+ = \overline{\splay(\Gamma^{(+)})}$ and hence $\Gamma$ has coinciding start and end vertex. Thus the branch decomposition of $\Gamma$ is $\Gamma$ itself, and so $\Gamma^\ast = \Gamma$.

    Let $\Gamma,\Delta,\Xi \in \h(X)$ and suppose that $\Xi\Gamma = \Xi\Delta$, i.e. $\overline{\Xi \times \Gamma} = \overline{\Xi \times \Delta}$ by Lemma \ref{lem:altproduct}. We aim to show that $\Xi^\ast\Gamma = \Xi^\ast\Delta$, i.e. $\overline{\Xi^\ast \times \Gamma} = \overline{\Xi^\ast \times \Delta}$. Note that if $\Xi$ has no trunk edges, then $\Xi = \Xi^\ast$ and we are immediately done. So suppose $\Xi$ has at least one trunk edge. Let $D_0 \times z_1 \times D_1 \times \dots \times z_m \times \Xi^\ast$ be the branch decomposition of $\Xi$, and for ease of notation write $\Xi_0 := D_0 \times z_1 \times D_1 \times \dots \times z_{m-1} \times D_{m-1} \times z_m$. By our assumption, we have that\[\overline{\Xi_0 \times \Xi^\ast \times \Gamma} = \overline{\Xi_0 \times \Xi^\ast \times \Delta}.\]Since in $\Xi_0$ there are no edges with initial vertex the end vertex of $\Xi_0$, any retraction on $\Xi_0 \times \Xi^\ast \times \Gamma$ restricts to a retraction on $\Xi^\ast \times \Gamma$, as any edges/vertices in this subtree must map to edges/vertices in the subtree and the restriction is certainly an idempotent morphism fixing the rooted vertices. By considering a retraction of $\Xi_0 \times \Xi^\ast \times \Gamma$ with pruned image, one sees this restricts to a retraction of $\Xi^\ast \times \Gamma$ with image $\overline{\Xi^\ast \times \Gamma}$. Hence if $\overline{\Xi_0 \times \Xi^\ast \times \Gamma} = \overline{\Xi_0 \times \Xi^\ast \times \Delta}$, then by performing this restriction to both sides we see that $\overline{\Xi^\ast \times \Gamma} = \overline{\Xi^\ast \times \Delta}$ indeed.

    Thus $\h(X)$ satisfies the right adequate quasi-identities and is therefore $h$-adequate.
\end{proof}

\subsection{Finitary conditions}\label{sec:finitary}

Some finitary conditions of free left $h$-adequate monoids were observed by Fountain \cite{fountain:h}, in particular being \textit{residually finite} (in the class of left adequate monoids) and \textit{hopfian}.

Recent interest in free left Ehresmann monoids has included various other finitary conditions, including being (\textit{weakly}) \textit{coherent}, \textit{finitely equated} and \textit{ideal Howson} \cite{gould:coherent}. We conclude this section with an investigation of these properties for $\h(X)$. Whilst deep understanding of the forthcoming concepts, presentations, ideals and actions is not required, we direct unfamiliar readers to \cite{dasar:coherent,gould:coherent,howie:fundamentals} amongst many other texts for undefined terminology.

A monoid $M$ is \textit{right coherent} if every finitely generated subact of every finitely presented right $M$-act is itself finitely presented. Since right ideals of $M$ are precisely the subacts of the right action of $M$ on itself, we may weaken the condition and say $M$ is \textit{weakly right coherent} if every finitely generated right ideal of $M$ is finitely presented (as a right $M$-act).

For each element $a \in M$, the \textit{right annihilator congruence of $a$} is the congruence $\mathbf{r}(a)$ on $M$ defined by $u\mathrel{\mathbf{r}(a)}v$ if and only if $au = av$. We say $M$ is \textit{finitely right equated} if $\mathbf{r}(a)$ is finitely generated (as a congruence) for all $a \in M$.

A monoid is said to be \textit{right ideal Howson} if the intersection of any two principal right ideals is finitely generated. This condition is equivalent to having finite generation of the intersection of any two finitely generated right ideals.

In our investigation of these properties for $\h(X)$, we will make good use of the following result.

\begin{prop}[{\cite[Lemma 3.5]{gould:coherentmons}, \cite[Proposition 2.19]{gould:coherent}}]\label{prop:coherentconditions}
    Let $M$ be a right adequate monoid. Then:
    \begin{enumerate}
        \item $M$ is finitely right equated.
        \item $M$ is right ideal Howson if and only if $M$ is weakly right coherent.
    \end{enumerate}
\end{prop}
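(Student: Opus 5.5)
This proposition is cited from \cite[Lemma 3.5]{gould:coherentmons} and \cite[Proposition 2.19]{gould:coherent}. If I were to prove it, I would work throughout from the right adequate quasi-identity: $au = av$ if and only if $a^\ast u = a^\ast v$, for all $a,u,v \in M$. The forward direction is the quasi-identity $zx = zy \rightarrow z^\ast x = z^\ast y$. The backward direction holds because $a = aa^\ast$, so $a^\ast u = a^\ast v$ gives $au = aa^\ast u = aa^\ast v = av$.

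For (1), fix $a \in M$ and let $e = a^\ast$. The observation above shows $\mathbf{r}(a) = \mathbf{r}(e)$. I would then show that $\mathbf{r}(e)$ is generated by the single pair $(e,1)$. First, $(e,1) \in \mathbf{r}(e)$, since $ee = e\cdot 1$. Conversely, let $\sigma$ be the congruence generated by $(e,1)$, and suppose $eu = ev$. Since $\sigma$ is compatible with multiplication, $u \mathrel{\sigma} eu = ev \mathrel{\sigma} v$. Hence $\mathbf{r}(a) = \langle (e,1)\rangle$, which is finitely generated.

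For (2), the forward direction (right ideal Howson implies weakly right coherent) is the main step. I would use the standard presentation criterion of \cite{gould:coherentmons}: a finitely generated right ideal $a_1M \cup \dots \cup a_nM$ is finitely presented if and only if two conditions hold. First, each $\mathbf{r}(a_i)$ must be finitely generated. Second, each set $\{(u,v) : a_iu = a_jv\}$ must be finitely generated as a right $M$-subact of $M\times M$. The first condition is exactly (1). For the second, I would use the fact that $a_iu = a_jv$ means this common element lies in $a_iM \cap a_jM$. If $a_iM \cap a_jM$ is generated by $c_1,\dots,c_m$, I would write $c_k = a_iu_k = a_jv_k$. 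A general pair then reduces to $(u_kw, v_kw)$ modulo the annihilator congruences, which are principal by (1). So the pairs $(u_k,v_k)$ together with finitely many annihilator generators should generate.

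For the converse, I would use the fact that finite presentation of $aM \cup bM$ yields finite generation of the set $\{(u,v) : au = bv\}$. The corresponding elements $au_k$ then generate $aM \cap bM$.

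The main obstacle is carefully quoting or reproving the presentation criterion for finitely generated right ideals. Everything else is straightforward once right adequacy reduces annihilators to the principal congruence generated by $(a^\ast,1)$.
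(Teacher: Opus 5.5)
The paper gives no proof of this proposition; it only cites the literature. Your outline is the standard argument behind those citations. Right adequacy gives $\mathbf{r}(a)=\mathbf{r}(a^\ast)=\langle (a^\ast,1)\rangle$, and weak right coherence is then characterised by finitely generated annihilators together with intersections $aM\cap bM$ that are finitely generated (or empty). Part (1) is correct, with one caveat: $\mathbf{r}(a)$ is only a \emph{right} congruence. For example, in the bicyclic monoid $\langle p,q\mid pq=1\rangle$ with $e=qp$, we have $(1,qp)\in\mathbf{r}(e)$ but $(q,q^2p)\notin\mathbf{r}(e)$. So $\sigma$ must be the right congruence generated by $(e,1)$, since otherwise $\sigma\subseteq\mathbf{r}(e)$ fails. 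Your chain $u\mathrel{\sigma}eu=ev\mathrel{\sigma}v$ uses only right compatibility, so it survives.

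The genuine gap is the presentation criterion you plan to quote in (2), which is false as stated. Finite presentation of $a_1M\cup\dots\cup a_nM$ does not force $\{(u,v): a_iu=a_jv\}$ to be finitely generated as a right $M$-subact of $M\times M$. Here is a counterexample. Let $E=\{1,0\}\cup\{g_n : n\geq 1\}$ be the semilattice with top $1$ and $g_ng_m=0$ for $n\neq m$; it is adequate with $x^\ast=x$. Take $a=g_1$ and $b=0$. Then $aE\cup bE=g_1E$ is finitely presented. However, $\{(u,v): g_1u=0v\}$ contains every $(g_n,1)$ with $n\geq 2$, and $(g_n,1)=(p,q)h$ forces $h=1$, so no finite set generates it as a subact. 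Hence your converse step (``finite presentation yields finite generation of $\{(u,v): au=bv\}$'') is wrong. In the forward direction, you also never actually verify the condition you quoted.

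The correct criterion is as follows. Let $F=\bigsqcup_i x_iM$ and let $\kappa$ be the kernel of $x_i\mapsto a_i$, so $x_iu\mathrel{\kappa}x_jv$ if and only if $a_iu=a_jv$. The ideal is finitely presented if and only if $\kappa$ is finitely generated as a right congruence on $F$, and this does not depend on the finite generating set chosen. Your ``modulo annihilators'' reasoning proves exactly this: the pairs $(x_ia_i^\ast,x_i)$ and $(x_iu_k,x_jv_k)$ generate $\kappa$, via $x_iu\sim x_iu_kw\sim x_jv_kw\sim x_jv$.

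For the converse, fix a finite generating set $H$ of $\kappa$ with respect to $\{a,b\}$. In any elementary chain from $x_1u$ to $x_2v$, the first step entering $x_2M$ uses some $(x_1p,x_2q)\in H\cup H^{-1}$. This gives $ap=bq$ and $au=apt$. So the finitely many such elements $ap$ generate $aM\cap bM$, and this direction needs no adequacy. With these repairs your proof is complete.
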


In fact Proposition \ref{prop:coherentconditions} only requires $M$ to be \textit{right abundant} (see \cite[Definition 2.11]{gould:coherent}), which right adequate monoids indeed are. There are, of course, duals of all of the above (including Proposition \ref{prop:coherentconditions}) replacing ``right'' with ``left''. 

Gould and Johnson \cite{gould:coherent} apply Proposition \ref{prop:coherentconditions} to free left Ehresmann monoids. For any set $X$, it follows immediately that $\fleh(X)$ is finitely right equated and it is shown in \cite[Propositions 6.4 and 6.7]{gould:coherent} that $\fleh(X)$ is left and right ideal Howson. We observe the following analogue.

\begin{prop}\label{prop:hisRIH}
    Free left $h$-Ehresmann monoids are left and right ideal Howson.
\end{prop}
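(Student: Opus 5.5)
We work with the description $\h(X)$ of the free left $h$-Ehresmann monoid on $X$ (rank $0$ being trivial), using the product $\Gamma\Delta = \overline{\Gamma\times\Delta}$ of Lemma \ref{lem:altproduct}. The plan is to adapt the arguments of \cite[Propositions 6.4 and 6.7]{gould:coherent} for $\fleh(X)$, transported to $\h(X)$ via the surjective morphism $\widehat{\cdot}$ of Theorem \ref{thm:hatmor}. Throughout, we use that elements of $\h(X)$ are fixed by $\splay$ (equation \eqref{eq:splaygammaisgamma}), so each branch is a pruned bouquet of non-splitting paths.

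\textbf{Right ideal Howson.} Let $\Gamma,\Delta \in \h(X)$. An element $\Xi$ lies in $\Gamma\h(X)$ exactly when $\Xi = \overline{\Gamma\times\Sigma}$ for some $\Sigma$. First we reduce to trunks. If $\Gamma\h(X)\cap\Delta\h(X)$ is non-empty, then the trunk label of one of $\Gamma,\Delta$ is a prefix of the other's, say that of $\Delta$ extends that of $\Gamma$. Write $\Delta = \Delta_0 \times w \times \Delta^\ast$, where $\Delta_0$ is the part of $\Delta$ up to the trunk vertex matching the end of $\Gamma$, and $w$ is the rest of the trunk of $\Delta$. The candidate generators are the elements
\[
\Gamma\,\Delta_0'\,w\,\Delta^\ast,
\]
where $\Delta_0'$ ranges over a finite set of idempotents. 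Each such $\Delta_0'$ encodes the branch of $\Delta$ at the junction together with its later branches, glued along $w$. We show that an element $\overline{\Gamma\times\Sigma}=\overline{\Delta\times\Sigma'}$ of the intersection has a pruned image containing copies of both $\Gamma$ and $\Delta$ at the appropriate trunk positions. Hence it factors through $\Gamma\cdot\Delta''$, where $\Delta''$ is built from the branches of $\Delta$ beyond the junction together with $\Delta$'s contribution at the junction. Since $\Gamma$ and $\Delta$ are finite, there are only finitely many such combinations, which gives a single generator or finitely many generators.

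\textbf{Left ideal Howson.} By the left analogue of Proposition \ref{prop:coherentconditions}, together with the left adequacy of $\h(X)$ (Corollary \ref{cor:hislefth}), this is equivalent to weak left coherence. The direct argument goes dually. $\h(X)\Gamma\cap\h(X)\Delta$ is nonempty only if one trunk label is a suffix of the other. Elements of $\h(X)\Gamma$ are trees whose final segment reproduces $\Gamma$'s branches, with the first branch $B_0$ only required to embed after retraction. The intersection is then generated by the finitely many trees obtained by overlaying the branches of $\Gamma$ and $\Delta$ along the common trunk suffix. Equivalently, it is generated by the single element $\Xi$, where $\Xi$ is the pruned tree with the longer trunk whose branches are the products $B_i C_j$ of corresponding branches of $\Gamma$ and $\Delta$.

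\textbf{Main obstacle.} The step most likely to cause difficulty is verifying that every element of an intersection lies in the ideal generated by the proposed generators. This needs a careful analysis of how retraction can absorb branches of $\Gamma$ into the appended tree $\Sigma$, as in the Case 2 argument of the left adequacy proof. It is here that splaying helps, since branches are unions of paths and their inclusion is decided by comparing finitely many path labels. If a direct argument stalls, we would instead try to show that $\widehat{\cdot}$ maps intersections of principal ideals of $\fleh(X)$ onto those of $\h(X)$, and conclude from \cite{gould:coherent}.
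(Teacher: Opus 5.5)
Your main line of argument has a genuine gap. The step you flag as the main obstacle is never carried out: you do not show that every element of the intersection lies in the ideal generated by your candidates. Moreover, the candidates you name are wrong.

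\emph{Left case.} A retraction of $\Sigma\times\Gamma$ fixes the trunk. Hence it maps the subtree of $\Gamma$ beyond its first trunk edge into itself, and since $\Gamma$ is pruned it fixes that subtree. So every element of $\h(X)\Gamma$ carries the later branches $B_1,\dots,B_k$ of $\Gamma$ \emph{exactly}, and only the first branch can be overlaid. Two of your claims therefore fail: the suffix condition on trunks does not make the intersection non-empty, and later branches cannot be multiplied. For $\Gamma = x$ and $\Delta = xy^+$, the intersection $\h(X)\Gamma\cap\h(X)\Delta$ is empty, yet your recipe produces $xy^+$, which is not in $\h(X)\Gamma$.

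\emph{Right case.} The decomposition $\Delta=\Delta_0\times w\times\Delta^\ast$ exists only when $\Delta$ has trivial branches at the trunk vertices strictly inside $w$. An idempotent $\Delta_0'$ placed at the junction vertex cannot supply those branches. For example, take $\Gamma=\varepsilon$ and $\Delta = xy^+x$ with $y\neq x$. Every element of $\Delta\h(X)$ has a non-trunk $y$-edge at the middle trunk vertex, while no element $\Delta_0'xx$ does. So, as written, neither half proves finite generation.

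Your fallback, however, is exactly the paper's proof, and it needs no combinatorics.
\begin{itemize}
\item By Lemma \ref{lem:altproduct}, the product on $\h(X)$ is the product $\overline{\Gamma\times\Delta}$ of $\fleh(X)$, so $\h(X)$ is a submonoid of $\fleh(X)$.
\item By Theorem \ref{thm:hatmor} and \eqref{eq:hathatishat}, the surjective morphism $\widehat{\cdot}:\fleh(X)\to\h(X)$ fixes $\h(X)$ pointwise. That is, $\h(X)$ is a monoid retract of $\fleh(X)$.
\item For $\Gamma,\Delta\in\h(X)$, the set $\Gamma\h(X)\cap\Delta\h(X)$ is the image under $\widehat{\cdot}$ of $\Gamma\fleh(X)\cap\Delta\fleh(X)$. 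Elements of the former are fixed by $\widehat{\cdot}$ and lie in the latter. Conversely, $\widehat{\Gamma\Sigma}=\Gamma\widehat{\Sigma}$ and $\widehat{\Delta\Sigma'}=\Delta\widehat{\Sigma'}$.
\item Hence if $\Theta_1,\dots,\Theta_n$ generate the intersection in $\fleh(X)$, then $\widehat{\Theta_1},\dots,\widehat{\Theta_n}$ generate it in $\h(X)$. The same argument works dually for left ideals.
\end{itemize}
Combined with Gould and Johnson's result that $\fleh(X)$ is left and right ideal Howson (their Propositions 6.4 and 6.7), this is the whole proof. The paper simply quotes the general fact that the ideal Howson property passes to retracts.
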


\begin{proof}
    The free left $h$-Ehresmann monoid of rank $0$ is certainly left and right ideal Howson. So now fix a non-empty set $X$. By considering $\h(X)$ as a subset of $\fleh(X)$, it follows from Lemma \ref{lem:altproduct} that $\h(X)$ is a submonoid of $\fleh(X)$. By Theorem \ref{thm:hatmor}, the map $\widehat{\cdot} : \fleh(X) \to \h(X)$ is a surjective $(2,1,0)$-morphism. In particular it is a surjective morphism of semigroups, and it satisfies $\widehat{\Gamma} = \Gamma$ for all $\Gamma \in \h(X)$ by \eqref{eq:hathatishat}. Such morphisms are known as \textit{retracts} (not to be confused with branch retraction) and the class of [resp. left] right ideal Howson monoids is known to be closed under taking retracts \cite[Corollary 3.5]{dasar:coherent}. Since $\fleh(X)$ is left ideal Howson \cite[Proposition 6.4]{gould:coherent} and right ideal Howson \cite[Proposition 6.7]{gould:coherent}, our result follows.
\end{proof}

We obtain a few extra properties for free.

\begin{cor}
    Free left $h$-Ehresmann monoids are finitely left equated, finitely right equated, weakly left coherent and weakly right coherent.
\end{cor}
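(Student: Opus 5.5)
The corollary follows by combining Proposition \ref{prop:coherentconditions}, its left--right dual, Theorem \ref{thm:hishadequate} and Proposition \ref{prop:hisRIH}. Rank $0$ is trivial: the trivial monoid has all these properties. So fix a non-empty set $X$ and work with $\h(X)$. By Corollary \ref{cor:hEhrandhAdcoincide}, $\h(X)$ is also the free left $h$-Ehresmann monoid.

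First, I use Theorem \ref{thm:hishadequate}, which shows $\h(X)$ is two-sided $h$-adequate. In particular it is both left adequate and right adequate. Since adequacy is quasi-equational, nothing about freeness in other signatures is needed; only the semigroup-theoretic properties matter.

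Second, I apply Proposition \ref{prop:coherentconditions} to $\h(X)$ as a right adequate monoid. This gives that $\h(X)$ is finitely right equated, and that it is weakly right coherent if and only if it is right ideal Howson. The dual of Proposition \ref{prop:coherentconditions}, which the paper notes holds, applies to the left adequate monoid $\h(X)$. It gives that $\h(X)$ is finitely left equated, and weakly left coherent if and only if left ideal Howson.

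Third, Proposition \ref{prop:hisRIH} says $\h(X)$ is both left and right ideal Howson. Feeding this into the two equivalences yields weak left and weak right coherence.

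The only point needing care is the bookkeeping: Proposition \ref{prop:coherentconditions} needs right adequacy, and its dual needs left adequacy. Left adequacy alone, which is what the freeness construction gives directly, would not suffice for the right-handed statements. This is exactly why Theorem \ref{thm:hishadequate}, rather than just Corollary \ref{cor:hislefth}, is invoked. Apart from that the argument is a direct combination of the cited results.
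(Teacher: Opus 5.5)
Your proposal is correct and matches the paper's proof: it combines Theorem \ref{thm:hishadequate} (two-sided adequacy), Proposition \ref{prop:coherentconditions} and its dual, and Proposition \ref{prop:hisRIH}, with the same left/right bookkeeping. Your remarks on the rank $0$ case and on Corollary \ref{cor:hEhrandhAdcoincide} only spell out what the paper leaves implicit.
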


\begin{proof}
    Follows from Theorem \ref{thm:hishadequate}, Proposition \ref{prop:coherentconditions} and its dual, and Proposition \ref{prop:hisRIH}.
\end{proof}

We finish this section by commenting on full coherency. By \cite[Theorem 6.3]{gould:coherentfreemons}, the class of [resp. left] right coherent monoids is closed under taking retracts. Unfortunately we are unable to appeal to our constructed retract in this case: free left Ehresmann monoids are known to not be left coherent for rank at least $2$ \cite[Theorem 5.5]{gould:coherent} and whether they are right coherent is as yet unknown. We too observe the following analogue.

\begin{thm}
    For rank at least $2$, free left $h$-Ehresmann monoids are not left coherent.
\end{thm}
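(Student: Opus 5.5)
The plan is to adapt the Gould--Johnson argument \cite[Theorem 5.5]{gould:coherent} that $\fleh(X)$ is not left coherent for $|X|\geq 2$, using the concrete description of $\h(X)$ from Section \ref{sec:splaying}. We cannot appeal to the retract $\widehat{\cdot}:\fleh(X)\to\h(X)$: left coherence is inherited by retracts, but \emph{failure} of left coherence is not. We therefore argue inside $\h(X)$ directly.

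We use the standard congruence-theoretic criterion for coherence (in the style of \cite{gould:coherentfreemons}). A monoid $M$ is left coherent if and only if, for every finitely generated left congruence $\sigma$ on $M$ and all $a,b\in M$, two conditions hold. First, the left congruence $\{(u,v): ua\mathrel{\sigma}va\}$ is finitely generated. Second, the left ideal $\{u : ua\mathrel{\sigma}b\}$ is either empty or finitely generated. It suffices to exhibit one finitely generated (ideally principal) left congruence $\sigma$ and one element $a$ for which the first set is not finitely generated.

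Fix distinct $x,y\in X$. We choose $\sigma$ generated by a single pair of elements of $\h(X)$ that are splay-fixed, such as trees whose branches are single paths, for instance a pair like $(x^+, (xy)^+)$ or a variant taken from the Gould--Johnson witness. We then choose $a$ so that the pairs $(u_n,v_n)$ with $u_na\mathrel{\sigma}v_na$ form an infinite family indexed by $n$, built from powers $y^n$ or trees $(xy^n)^+$. Working in $\h(X)$ lets us compute products as $\overline{\Gamma\times\Delta}$ by Lemma \ref{lem:altproduct} and $\Gamma^+$ as $\overline{\splay(\Gamma^{(+)})}$. Since each branch of a tree in $\h(X)$ is a union of non-splitting paths from its root, membership of pairs in $\sigma$ can be tested by following labelled paths from trunk vertices to leaves.

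The key step, and the main obstacle, is to show that the left congruence $\lambda=\{(u,v): ua\mathrel{\sigma}va\}$ is not generated by finitely many pairs. The plan is to find an invariant of an elementary $\lambda$-step that bounds the length (say, the maximal length of a path to a leaf in a branch) of the words introduced by a finite generating set $H\subseteq\lambda$. We then show that some $(u_N,v_N)$ with $N$ larger than this bound cannot be reached by a finite $H$-sequence $u_N=s_1c_1, t_1c_1=s_2c_2,\dots$ with $(s_i,t_i)\in H\cup H^{-1}$. The leaf paths in $\h(X)$ are rigid: splaying makes branches path-determined, and pruning only deletes paths that are prefixes of others. We expect this rigidity to make the invariant cleaner than in $\fleh(X)$.

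The risk is that the extra identification from the left $h$ condition, namely $(xz^+y)^+=(xy)^+(xz)^+$, collapses the infinite family. If it does, we would first try to replace the branching witness of Gould--Johnson by one using only a trunk of $x$'s with pendant paths of $y$'s, since such trees are already fixed by $\splay$. Finally, we check that $\sigma$ is indeed finitely generated as a left congruence and that each $(u_n,v_n)$ lies in $\lambda$ but not in the left congruence generated by $\{(u_m,v_m):m<n\}$. Together these give the failure of left coherence.
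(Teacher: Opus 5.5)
\paragraph{Overall assessment.} Your framing matches the paper's: exhibit a finitely generated left congruence $\sigma$ and an element $a$ for which $\lambda=\{(u,v): ua\mathrel{\sigma}va\}$ is not finitely generated, modelled on Gould and Johnson's Theorem 5.5. The paper's proof simply reruns that theorem's argument with $a\neq b\in X$. Your proposal, however, never actually supplies the argument, so it does not yet prove the theorem.

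\paragraph{The gap.} You do not fix $\sigma$, $a$ or the family $(u_n,v_n)$. You offer only candidates such as $(x^+,(xy)^+)$ ``or a variant''. The step you yourself call the main obstacle is left entirely open: an invariant showing that no finite $H\subseteq\lambda$ generates $\lambda$. You also leave open whether the left $h$ identity collapses the family.

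A minor point: $\{u: ua\mathrel{\sigma}b\}$ is not a left ideal in general. The second condition in Gould's criterion is that $S(a\sigma)\cap S(b\sigma)$ be empty or finitely generated. This is harmless, since you only need failure of the first condition.

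\paragraph{The missing idea.} What makes the transfer essentially free, and dissolves your collapse worry, is the following. By Lemma \ref{lem:altproduct}, multiplication in $\h(X)$ is exactly the multiplication of $\fleh(X)$. So $\h(X)$ is a submonoid of $\fleh(X)$, and hence of the free two-sided Ehresmann monoid. The left $h$ identity only alters ${}^+$, and ${}^+$ plays no role in coherence, which is a property of the underlying monoid. The only thing that changes is which trees are available.

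Now take a submonoid $M\le N$ and finite $H, K\subseteq M\times M$. Every elementary step $cs\to ct$ with $c\in M$ and $(s,t)\in H\cup H^{-1}$ is also an elementary step in $N$. Hence the left congruence on $M$ generated by $H$ is contained in the one on $N$ generated by $H$. Likewise $\sigma_M\subseteq\sigma_N$, so $\lambda_M\subseteq\lambda_N$.

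Therefore the Gould--Johnson non-generation argument, carried out in the larger monoid, applies verbatim to any finite $H\subseteq\lambda_M$. No new invariant exploiting ``rigidity'' of splayed trees is needed. What remains to check is only that the Gould--Johnson witness elements, built from two distinct letters, are pruned splay-fixed trees. You must also check that the derivations showing $u_na\mathrel{\sigma}v_na$ use multipliers from $\h(X)$, so that the family genuinely lies in $\lambda$ computed in $\h(X)$.

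That observation is the whole content of the paper's short proof. Without it, or without an explicit witness and invariant of your own, the proposal remains a plan rather than a proof.
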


\begin{proof}
    In the proof of Proposition \ref{prop:hisRIH}, we saw that one may treat $\h(X)$ as a submonoid of $\fleh(X)$. In turn, $\fleh(X)$ may be considered as a submonoid of the free two-sided Ehresmann monoid on $X$. By taking $a \neq b \in X$, an entirely identical argument to that of \cite[Theorem 5.5]{gould:coherent} shows that $\h(X)$ is not left coherent.
\end{proof}

\section{Open Questions}\label{sec:openqs}

As previously mentioned, it is open whether free left Ehresmann monoids are right coherent. On the other hand, free left ample monoids in all ranks are right coherent \cite[Theorem 5.7]{gould:coherentfreemons}. With left $h$-Ehresmann monoids sitting between these classes, a natural question arises.

\begin{quest}
    Are free left $h$-Ehresmann monoids right coherent?
\end{quest}

In particular if $\fleh(X)$ is found to be right coherent, then by using our retract as in the proof of Proposition \ref{prop:hisRIH} one may conclude that $\h(X)$ is right coherent. In the monogenic case, since the free left $h$-Ehresmann monoid is exactly the free left Ehresmann monoid, both left and right coherency remain open \cite[Question 8.4]{gould:coherent}.

Since the work here (and its dual) describe free left and free right $h$-Ehresmann semigroups, one might expect a description of free two-sided $h$-Ehresmann semigroups to nicely follow. However the obvious idea of splaying and retracting Kambites' $X$-trees from \cite{kambites:free} unfortunately fails -- as in \cite{aird:growth}, complications arise with so-called ``zig-zag trees''. Consider the following example. One may show that the words $(ab^+c)^+$ and $(ab)^+(ac(b^+c)^\ast)^+$ are identified in the free $h$-Ehresmann monoid on $\{a,b,c\}$, but representing these words as trees and performing the sensible notion of splaying and retraction fails to identify them.

\begin{quest}
    Is there a description of free $h$-Ehresmann monoids using directed trees?
\end{quest}

We close on an open-ended question which we hope the reader entertains.

\begin{quest}
    Which properties of (free, left) ample semigroups translate to (free, left) $h$-adequate or $h$-Ehresmann semigroups?
\end{quest}


\section*{Acknowledgements}
The author would like to thank Thomas Aird for helpful discussions.



\end{document}